\theoremstyle{definition}
\newtheorem{thm}{Theorem}[section]
\newtheorem{lem}[thm]{Lemma}
\newtheorem{prop}[thm]{Proposition}
\newtheorem{ex}[thm]{Example}
\newtheorem{defi}[thm]{Definition}
\theoremstyle{remark}
\newtheorem{rem}{Remark}[section]
\newtheorem{conj}[thm]{Conjecture}
\newtheorem*{acknowledgment}{Acknowledgments}
\numberwithin{equation}{section}\numberwithin{figure}{section}
\def\co{\colon\thinspace}
\newcommand{\bigslant}[2]{{\raisebox{.2em}{$#1$}\left/\raisebox{-.2em}{$#2$}\right.}} 
\newcommand{\HD}{\mathcal{H}}
\title{quantum invariants of closed framed $3$-manifolds based on ideal triangulations}
\author{Serban Matei Mihalache}%
\address{Department of Mathematics, Tohoku University, 
6-3, Aoba, Aramaki-aza, Aoba-ku, 
Sendai, 980-8578, Japan}
\email{matei.mihalache.q3@dc.tohoku.ac.jp}
\author{Sakie Suzuki}%
\address{Department of Mathematical and Computing Science, School of Computing,
Tokyo Institute of Technology,
2-12-1 Ookayama, Meguro-ku, Tokyo 152-8552, Japan}
\email{sakie@c.titech.ac.jp}
\author{Yuji Terashima}%
\address{Department of Mathematics, Tohoku University, 
6-3, Aoba, Aramaki-aza, Aoba-ku, 
Sendai, 980-8578, Japan}
\email{yujiterashima@tohoku.ac.jp}
\begin{document}

\maketitle

\begin{abstract}
We construct a new type of quantum invariant of closed framed $3$-manifolds with the vanishing first Betti number. The invariant is defined for any finite dimensional Hopf algebra, such as small quantum groups, and is based on ideal triangulations.  
We use the canonical element of the Heisenberg double, which satisfies a pentagon equation, and graphical representations  of $3$-manifolds introduced by R. Benedetti and C. Petronio. The construction is simple and easy to be understood intuitively; the pentagon equation reflects the Pachner $(2,3)$ move of ideal triangulations and the non-involutiveness of the Hopf algebra reflects framings.
For an involutory Hopf algebra, the invariant reduces to an invariant of closed combed $3$-manifolds.
For an involutory unimodular counimodular Hopf algebra, the invariant reduces to the topological invariant of  closed $3$-manifolds which is introduced in our previous paper. 
In this paper we formalize the construction using more generally a Hopf monoid in a symmetric pivotal category and use tensor networks for calculations.
\end{abstract}

\tableofcontents

\section{Introduction}
In his foundational work \cite{W}, E. Witten observed that the Chern-Simons theory leads to a new type of $3$-manifold invariants. After that N. Reshetikhin and V. Turaev \cite{RT} discovered a rigorous construction of invariants which is believed to be a mathematical realization of Chern-Simons-Witten theory.  
The Witten-Reshetikhin-Turaev (WRT) invariant is constructed based on  link surgery presentations of $3$-manifolds, and algebraic ingredients are finite dimensional representations of quantum groups, or, more generally, of ribbon Hopf algebras.
In the same period, various constructions of quantum invariants of 3-manifolds were realized: 
the Hennings-Kauffman-Radford invariant \cites{H, KR}  by  link surgery presentations and integrals of ribbon Hopf algebras, 
the Turaev-Viro
invariant \cites{TV, BW} by state sum models based on triangulations and quantum $6j$-symbols,  the Kuperberg invariant \cites{Ku1, Ku2} by  Heegaard decompositions and finite dimensional Hopf algebras with theory of their integrals. 
Although the above invariants are based on different decompositions of $3$-manifolds and their algebraic ingredients are slightly different, they are closely related at least when we use quantum groups at roots of unity.

This paper presents a new construction of quantum invariants of closed framed $3$-manifolds with the vanishing first Betti number.  
The construction is based on ideal triangulations and finite dimensional Hopf algebras. Compared to the above invariants, the definition is quite simple as we will see below, where
we use only the structure constants of a Hopf algebra without using any representation theory or integral.
The invariant can be formulated as a monoidal functor, and for the well-definedness of the invariant, each argument is local and finite like the Reidemeister moves for knot diagrams.
The key ingredient of the invariant is the canonical element $T\in \mathcal{H}(H)^{\otimes 2}$ of the Heisenberg double $\mathcal{H}(H)$ of a Hopf algebra $H$, satisfying the pentagon equation (cf. \cites{BS, Ka})
\begin{align*}
T_{12}T_{13}T_{23}=T_{23}T_{12} \in \mathcal{H}(H)^{\otimes 3},
\end{align*}
which, in our invariant,  will be the algebraic counterpart   of the Pachner $(2,3)$ move of ideal triangulations.
For the  quantum Borel subalgebra of $\mathfrak{sl}_2$, the pentagon equation of $T$ turns out to be essentially the Fadeev-Kashaev's pentagon identity for the quantum dilogarithm \cites{FK, Ka}.
The pentagon identity for the quantum dilogarithm was the crucial result sitting behind a sequence of important works \cites{Ka2, Ka3, Ka4, AK}  related to the Kashaev invariant of links, the volume conjecture, and investigation of $3$-manifolds invariants and TQFT using the quantum dilogarithm and quantum Teichm\"{u}ller Theory.

In previous works, we have been investigating constructions of quantum type 
invariants of $3$-manifolds  such that the Pachner $(2,3)$ move corresponds to the pentagon equation of $T$.
In \cite{S}, for an arbitrary finite dimensional Hopf algebra, the second author reconstructed the universal quantum invariant \cites{Law90, Oh} of framed tangles by associating $T^{\pm 1}$ with each ideal tetrahedron in  tangle complements. 
Here, the universal quantum invariant includes all informations on the Reshetkhin-Turaev invariant of links associated with the Drinfeld double of a Hopf algebra, in particular, the colored Jones polynomial and the Kashaev invariant when we use the small quantum Borel subalgebra of $\mathfrak{sl}_2$. 
In \cite{MST}, we obtained a topological invariant of closed $3$-manifolds by extending the construction in \cite{S} restricting Hopf algebras to be involutory, unimodular and counimodular.
Here, for an involutory Hopf algebra we got an invariant of closed combed $3$-manifolds, and in addition, we used unimodularity and counimodularity  to mod out combings to obtain a topological invariant of closed $3$-manifolds. 
Note that quantum groups are non-involutory Hopf algebras, so the invariants in \cite{MST} are not quite  ``quantum" ones.

In the present paper, in order to define $3$-manifolds invariants using arbitrary finite dimensional Hopf algebras, we make use of  framing structures of $3$-manifolds.
To represent closed framed $3$-manifold,  we basically use closed normal o-graphs introduced by R. Benedetti and C. Petronio \cite{BP}, which represent spines (the dual notion of ideal triangulations) of closed combed $3$-manifolds. Each vertex of a closed normal o-graph represents an ideal tetrahedron with a combing inside, and each connecting edge tells a pair of faces of tetrahedra which are glued to each other.
In order to represent framings, in \cite{BP} they assigned a $\mathbb{Z}/2\mathbb{Z}$ weight on each edge of closed normal o-graphs.
In this paper we instead assign an integer, and by modifying the argument in \cite{BP}, we show that framed integral normal o-graphs  represent closed framed $3$-manifolds when the first Betti number  vanishes (Proposition \ref{prop: framed 3-mfd BP-diagram}), see Figure \ref{EX1}  for examples with $S^3$ and $L(2,1)$ (see Example \ref{lens} for the details).\begin{figure}[ht]
    \centering
    \includegraphics{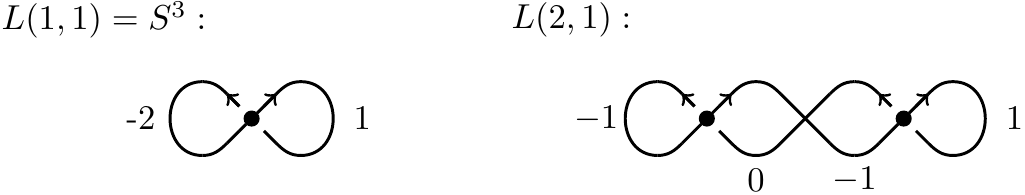}
    \caption{Framed integral normal o-graphs representing $S^3$ and $L(2,1)$ with certain framings.}\label{EX1}
\end{figure}
\noindent The modification to integer weights will be necessary when we use non-involutory Hopf algebras in the following step.

 Once we get the formulation representing closed framed $3$-manifolds, the construction of the invariant is very simple;
for a finite dimensional Hopf algebra $H$, we replace each vertex and each weighted edge of an integral normal o-graph with a tensor in $H$ as in Figure \ref{EX2} (see Section \ref{sec:Invariant} for the details), and then contract these tensors according to how the diagram was connected. See Figure \ref{EX3} for an example (see Example \ref{lens} and \ref{ex:invariant of S3} for the details of the calculation). 

\begin{figure}[h]
  \centering
  \includegraphics{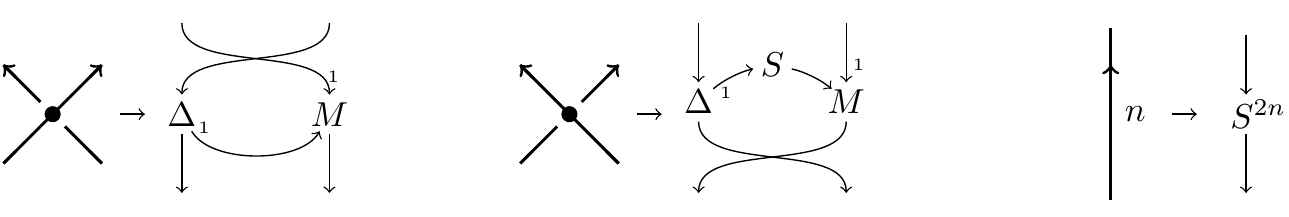}
  \caption{Definition of invariant.}\label{EX2}
\end{figure}
\begin{figure}[h]
    \centering
    \includegraphics{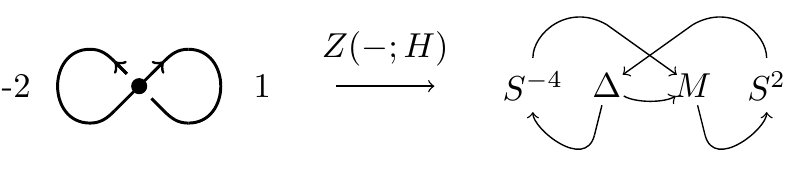}
    \caption{Calculation of the invariant for  framed $S^3$ in Figure \ref{EX1}.}\label{EX3}
\end{figure}

In the above construction, the tensor associated with each vertex comes from the canonical element $T^{\pm 1}$ of the Heisenberg double which satisfies the pentagon equation, see Appendix \ref{Heisenberg double construction} for the details.  Note that
the construction does not depends on integer weights when the Hopf algebra is involutory, i.e., when $S^2=\mathrm{id}$, and it reduces to the invariant defined in \cite{MST}.
The construction includes only the structure constants of Hopf algebras, and thus the invariant can be defined for each Hopf monoid in a symmetric pivotal category, where we can use not only finite dimensional Hopf algebras but also finite dimensional Hopf super algebras. 
We note that our construction behaves well as a monoidal functor because the equivalent moves for framed integral normal o-graphs are locally and finitely generated  (as those for normal o-graphs), unlike the constructions by link surgery presentations or Heegaard decompositions of $3$-manifolds  where the equivalent moves contain handle slides.

The invariant we define in this paper is for pairs $(M,f)$ of a closed oriented $3$-manifold $M$ and its framing $f$, which is a trivialization of tangent bundle $TM$. 
It is well known that every closed oriented $3$-manifold admits a framing.
If we would like to reduce the invariants to topological ones, theoretically, we have the following way using spin structures.
In \cite{KM2}, it is observed that, given a spin structure $s$ over $M$, one can always choose a canonical framing with underline spin structure $s$. This implies that our invariants can be boiled down to invariants of spin $3$-manifolds.
The cardinal of the set of framings is always infinite, while that of spin structures is always finite. Thus we can add the invariants over all  spin structures to get topological invariants.

It would be interesting to investigate relations with other quantum invariants of $3$-manifolds which we mentioned in the beginning.  In particular, the Kuperberg invariant for a non-involutory Hopf algebra is also an invariant of framed 3-manifolds.
Although both of his and our invariants use non-involutory Hopf algebras and framed 3-manifolds, the
treatments of framings are quite different and at the moment we do not know the exact relation between these two invariants. Recall that the WRT invariant and the Turaev-Viro invariant are constructed using representation categories of  Hopf algebras. Our invariant  does not use  representation categories, but as a result our invariant for the small quantum Borel subalgebra of $\mathfrak{sl}_2$ seems to be equal to the $SO(3)$ WRT invariant times the cardinal number of the first homology group up to  multiplication by $q$ (Conjecture \ref{vsWRT}).
  
As we mentioned earlier, our invariant is  constructed from the canonical element $T$ associated with a Hopf algebra  and when we use quantum Borel subalgebra  of $\mathfrak{sl}_2$, which is an infinite dimensional Hopf algebra, the resulting pentagon equation is essentially the Fadeev-Kashaev's pentagon identity for the quantum dilogarithm.
In our setting, the Hopf algebra must be finite dimensional and thus we need to use the small quantum Borel subalgebra.
It would be an interesting problem to see if our construction can be extended to infinite dimensional quantum Borel subalgebra, which will result in an invariant based on the quantum dilogarithm.

The paper is organized as follows.
In Section 2, we recall symmetric pivotal categories with trivial twists and Hopf monoids in them. In symmetric pivotal categories with trivial twists, we can draw their morphisms using a generalization of tensor networks of linear maps.
In Section 3, we define integral normal o-graphs and their local moves, and construct invariants of them using Hopf monoids.
In Section \ref{sec:Branched standard spine} we introduce framed integral normal o-graphs and  show Proposition \ref{prop: framed 3-mfd BP-diagram}, which says that if the first Betti number vanishes, closed framed 3-manifolds can be represented by framed integral normal o-graphs. 
In the last section, we give examples of invariants for $S^3$ and $L(2,1)$ with certain framings, and observe the similarity of the invariant with the $SO(3)$ WRT invariant when we use the small quantum Borel subalgebra of $\mathfrak{sl}_2$. In Appendix A, we  explain an alternative construction of the invariant using the Heisenberg double of a Hopf algebra. 
 
\begin{acknowledgment}
 We would like to thank K. Hikami, M. Ishikawa, A. Kato, Y. Koda for valuable discussions. This work is partially supported by
 JSPS KAKENHI Grant Number JP17K05243, JP19K14523, JP21K03240 and by JST CREST Grant Number JPMJCR14D6.
\end{acknowledgment}

\section{Hopf monoid in symmetric pivotal category}
\label{sec:Hopf algebras and tensor nertworks}
In this section, we recall the definition of Hopf monoids in symmetric pivotal categories. For example, finite dimensional Hopf algebras and finite dimensional Hopf super algebras are Hopf objects in the category $\text{Vect}^{\text{Fin}}_{\mathbb{K}}$ of finite dimensional vector spaces and in the category $\text{SVect}^{\text{Fin}}_{\mathbb{K}}$ of finite dimensional  super vector spaces, respectively.

\subsection{Symmetric pivotal category}
Throughout the paper, monoidal categories are assumed to be strict.

\begin{defi}[Left dual and left-rigid]Let $(\mathcal{C}, \otimes, \mathbb{I})$ be a monoidal category with the tensor product $\otimes$ and the unit object $\mathbb{I}$. 
For an object $V \in \mathcal{C}$, a \textit{\textbf{left dual}} of $V$ is a pair $(V^*, \text{ev}_V, \text{coev}_V)$ of an object $V^*\in \mathcal{C}$ and morphisms $\text{ev}_V\co V^*\otimes V\to \mathbb{I}$, $\text{coev}_V\co\mathbb{I}\to V\otimes V^*$ satisfying the following conditions:
\begin{align*}
  (\text{id}_V\otimes \text{ev}_V)\circ(\text{coev}_V\otimes \text{id}_V)&=\text{id}_V,\\
  (\text{ev}_V\otimes \text{id}_{V^*})\circ(\text{id}_{V^*}\otimes \text{coev}_V)&=\text{id}_{V^*}.
\end{align*}
A monoidal category is called \textit{\textbf{left-rigid}} if every object has a specified left dual. 
\end{defi}
For any left-rigid monoidal category $\mathcal{C}$, there exists a functor
\begin{align*}
  &?^*\co \mathcal{C}\to\mathcal{C}^{\text{op}}
\end{align*}
which sends an object $V$ to its dual $V^*$ and a morphism $f$ to $f^*:=  (\text{ev}_V\otimes \text{id}_{V^*})\circ (\text{id}_{V^*}\otimes f \otimes \text{id}_{V^*})\circ (\text{id}_{V^*}\otimes\text{coev}_V)$. 

\begin{defi}[Pivotal structure]
A left-rigid monoidal category $\mathcal{C}$ is called a \textit{\textbf{pivotal category}} if $\mathcal{C}$ is equipped with monoidal natural isomorphism 
\begin{align*}
  &\omega\co\text{id}_{\mathcal{C}}\to ?^{**}
\end{align*}
between identity functor $\text{id}_{\mathcal{C}}$ and $?^{**}$.
\end{defi}

We can define the notions of a \textit{\textbf{right dual}} and \textit{\textbf{right-rigid}} similarly.
A pivotal category $\mathcal{C}$ is always right-rigid with the right dual $(^{*}V, \widehat{\text{coev}}_V, \widehat{\text{ev}}_V)$ of $V\in \mathcal{C}$ as follows.
\begin{align*}
  ^{*}V&:=V^*,\\
  \widehat{\text{coev}}_V&:=(\text{id}_{V^*}\otimes\omega_{V^{**}}^{-1})\circ\text{coev}_{V^*},\\
  \widehat{\text{ev}}_V&:=\text{ev}_{V^*}\circ(\omega_V\otimes\text{id}_{V^*}).
\end{align*}

Recall that a braided monoidal category $\mathcal{C}$ with the braidings $\{c_{V,W}:V\otimes W\to W\otimes V\}_{V,W\in\mathcal{C}}$ is \textit{\textbf{symmetric}} if $c_{V,W}=c_{W,V}^{-1}$ for every $V,W\in\mathcal{C}$.

\begin{defi}[Trivial twists]\label{def:trivial twists}
  Let $\mathcal{C}$ be a symmetric pivotal category.
We say that $\mathcal{C}$ has \textit{\textbf{trivial twists}} if the following equality holds for every object $V$.
\begin{align*}
\text{id}_V&=(\text{id}_V\otimes\widehat{\text{ev}})\circ (c_{V,V}\otimes\text{id}_{V^*})\circ(\text{id}_V\otimes \text{coev}_V),\\
\text{id}_V&=(\text{ev}\otimes \text{id}_V)\circ (\text{id}_{V^*}\otimes c_{V,V})\circ(\widehat{\text{coev}}\otimes\text{id}_V).
\end{align*}
\end{defi}

\begin{ex}A left-rigid symmetric monoidal category $\mathcal{C}$ always admits a natural pivotal structure 
 $\omega_V:=(\text{ev}_{V}\otimes\text{id}_{V^{**}})\circ(\text{id}_{V^*}\otimes c_{V^*,V})\circ(\text{coev}_{V^*}\otimes\text{id}_V)$, for $V\in\mathcal{C}$, which has trivial twists.
\end{ex}

\begin{ex}
\label{ex:category of vector spaces}
Let $\mathbb{K}$ be a commutative ring  with unit, and $\text{Proj}^{\text{Fin}}_{\mathbb{K}}$  the category of finitely generated projective modules over $\mathbb{K}$.
 $\text{Proj}^{\text{Fin}}_{\mathbb{K}}$ has a natural symmetric tensor product and pivotal structure which has trivial twists \cite{TVVA}*{Example 1.7.2}. 
Note that  $\text{Proj}^{\text{Fin}}_{\mathbb{K}}=\text{Vect}^{\text{Fin}}_{\mathbb{K}}$ when $\mathbb{K}$ is a field.
\end{ex}

\begin{ex}
\label{ex:category of super vector spaces}
For a ribbon Hopf algebra $A$ with the   R-matrix $R$ which is symmetric, i.e., $\Delta (R)=\Delta^{\text{op}}(R)$, the finite dimensional representation category 
$\text{Mod}^{\text{Fin}}_A$ of $A$ admits a symmetric pivotal structure. 
Note that the group algebra $\mathbb{K}[\mathbb{Z}_2]$  for a field $\mathbb{K}$
admits a ribbon Hopf algebra structure with the R-matrix 
$\frac{1}{2}(1\otimes 1 + 1\otimes g + g\otimes 1 -g\otimes g)$ and the ribbon element $1$, where $g$ is a generator of $\mathbb{Z}_2$.
Then $\text{Mod}^{\text{Fin}}_{\mathbb{K}[\mathbb{Z}_2]}$ has a symmetric tensor product and pivotal structure with trivial twists, which can be transported  to the category $\text{SVect}^{\text{Fin}}_{\mathbb{K}}$ of finite dimensional super vector spaces, under the identification of $\mathbb{Z}_2$ modules and super vector spaces. 
\end{ex}

\subsection{Tensor Network}
\label{ssec:tn} 

A \textit{\textbf{tensor network}} over a finite dimensional vector space $V$ is an oriented graph which represents a tensor labeled by the set of open edges, where each incoming (resp. outgoing) edge labels $V^*$ (resp. $V$).
For example, the diagram in Figure \ref{fig:T and F} presents a $(m,n)$ tensor $T\in  (V^*)^{\otimes\mathcal{I}}\otimes V^{\otimes \mathcal{O}}=\mathrm{Hom}(V^{\otimes\mathcal{I}}, V^{\otimes\mathcal{O}})$, where $\mathcal{I}=\{i_1,\ldots, i_m\}$ is the set of incoming edges and $\mathcal{O}=\{o_1,\ldots, o_n\}$ is the set of outgoing edges.

\begin{figure}[ht]
  \centering
  \includegraphics{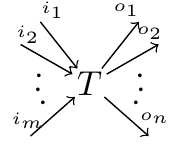}
  \caption{$(m,n)$ tensor.}
  \label{fig:T and F}
\end{figure}

Given two tensor networks $T$ and $S$, one gets a new tensor network by connecting an outgoing edge $o$ of $T$ with an incoming edge $i$ of $S$ (see Figure \ref{fig:contruction}), which represents the tensor obtained from $T\otimes S$ by contracting $V_o$ and $(V^*)_i$. 
\begin{figure}[H]
  \centering
  \includegraphics{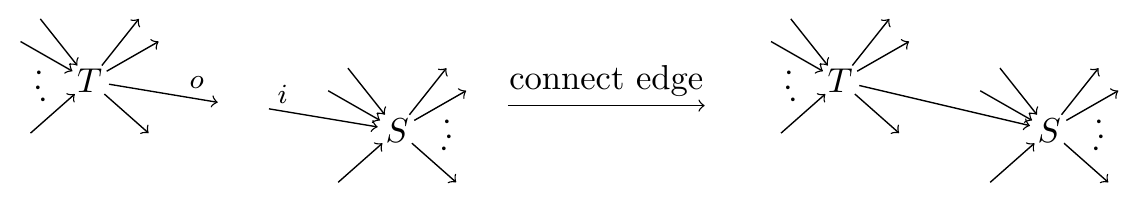}
  \caption{Contraction of tensors.}
  \label{fig:contruction}
\end{figure}

 For example, the left diagram in Figure \ref{fig:f and g} represents the composition $g\circ f$ of two maps $f, g\co V\to V$ and the right diagram represents the trace $\sum_{i}f^i_i = \text{Tr}(f)
\in \mathbb{K}$ of a map $f\co V\to V$, where $f_i^i=e^i(f(e_i))$ for a basis vector $e_i$ and its dual $e^i$. 
\begin{figure}[H]
  \centering
  \includegraphics{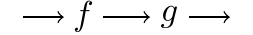}
  \hspace{10mm}
  \includegraphics{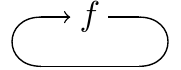}
  \caption{$g\circ f$ and $\text{Tr}(f)$.}
    \label{fig:f and g}
\end{figure} 

\noindent Here, reading from left to right,  the cap and the cup diagrams above can be seen as the coevaluation 
\begin{align*}
\text{coev}_V\co \mathbb{K} \to V \otimes V^*, \quad 1\mapsto \sum_i e_i\otimes e^i,
\end{align*}
 and the evaluation
\begin{align*}
 \widehat{\text{ev}_V}\circ V\otimes V^* \to \mathbb{K}, \quad x\otimes f \mapsto f(x),
\end{align*}
respectively, in $\text{Vect}^{\text{Fin}}_{\mathbb{K}}$. For example, we have
$\text{Tr}(f)= \widehat{\text{ev}_V}\circ (f\otimes\text{id}_V) \circ  \text{coev}_V \in \text{Hom} _{\text{Vect}^{\text{Fin}}_{\mathbb{K}}}(\mathbb{K}, \mathbb{K})$.

We can define \textit{\textbf{tensor networks in a symmetric pivotal category $\mathcal{C}$ with trivial twists}} in a similar way, and in the following sections we use them 
to describe morphisms in $\mathcal{C}$.
As a morphism in $\mathcal{C}$, a tensor network will normally be read from left to right, and the end points will be implicitly labeled (numbered) from top to bottom.
For example, for a monoid $A$ and a comonoid $C$ in $\mathcal{C}$, the multiplication $M\co A_1\otimes A_2 \to A$ and the comultiplication $\Delta \co  C \to C_1\otimes C_2$ are described as

\begin{figure}[H]
  \centering
  \includegraphics{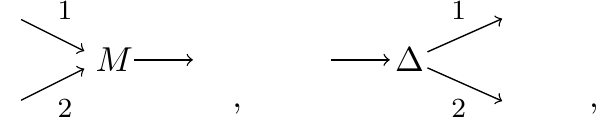}
\end{figure} 
\noindent respectively. 

\subsection{Hopf monoid}
Let $\mathcal{C}$ be a symmetric pivotal category with trivial twists.
\begin{defi}[Hopf monoid]\label{def:Hopf monoid}
A \textit{\textbf{Hopf monoid}} $(H, M, 1, \Delta, \epsilon, S)$ in  $\mathcal{C}$ is an object $H$ equipped with five morphisms
\begin{align*}
&M\co H\otimes H \to H, \quad
1\co \mathbb{I} \to H,\quad
\Delta \co H  \to H\otimes H,\quad 
\epsilon \co H \to \mathbb{I},\quad
S\co H \to H,
\end{align*}
called the \textit{\textbf{product}}, the \textit{\textbf{unit}}, the \textit{\textbf{coproduct}}, the \textit{\textbf{counit}} and the \textit{\textbf{antipode}}, respectively, which satisfies the following axioms:
\begin{figure}[H]
  \centering
  \includegraphics{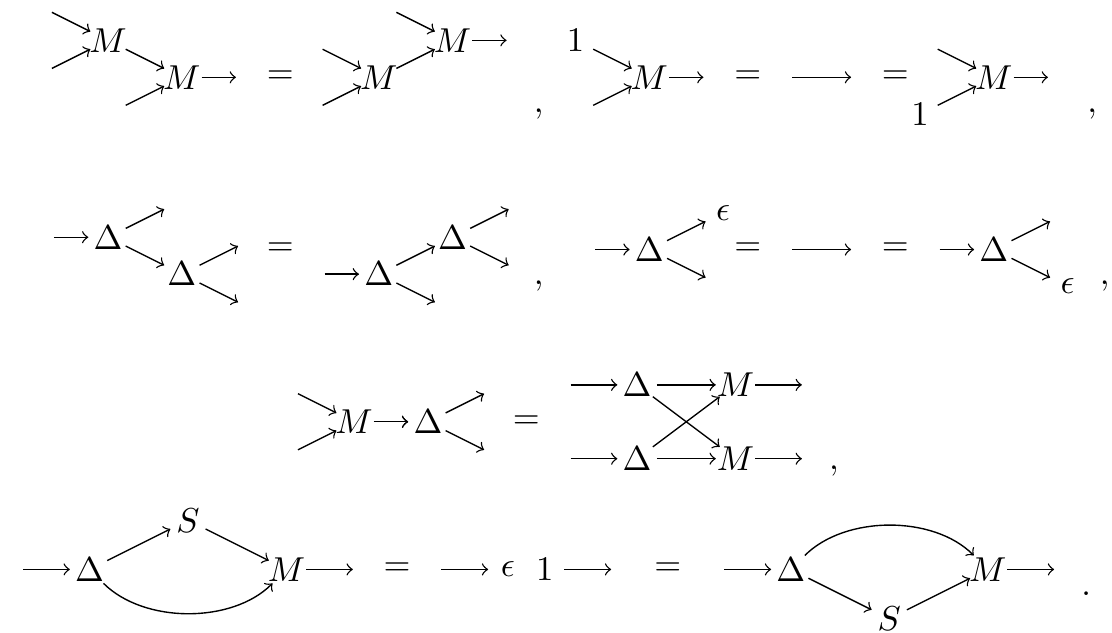}
\end{figure}
\end{defi}

Note that a Hopf monoid in $\text{Vect}^{\text{Fin}}_{\mathbb{K}}$ is a Hopf algebra over $\mathbb{K}$, and basic properties of Hopf algebras also hold for Hopf monoids as follows, see e.g. \cite{Ku2} for the proofs.
\begin{prop}\label{prop:antipode is antialgebra}
The following equality holds for any Hopf monoid in  $\mathcal{C}$:
  \begin{figure}[H]
      \centering
      \includegraphics{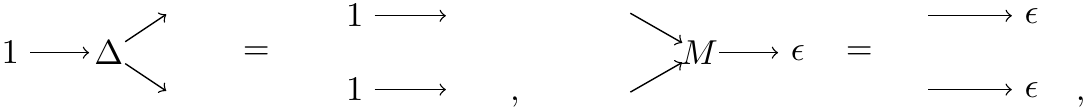}
  \end{figure}
\begin{figure}[H]
  \centering
  \includegraphics{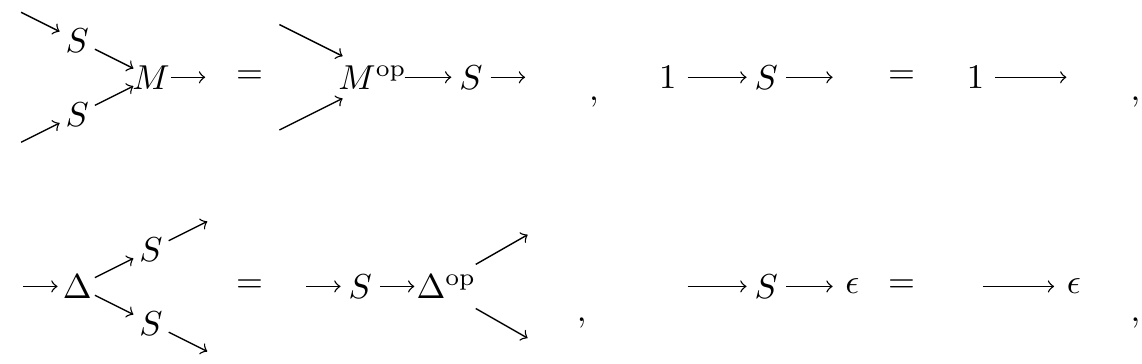}
\end{figure}
\end{prop}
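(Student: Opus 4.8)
The statement to be proved, Proposition \ref{prop:antipode is antialgebra}, collects the standard structural identities for a Hopf monoid: the compatibility of the antipode with unit and counit ($S\circ 1 = 1$ and $\epsilon\circ S = \epsilon$), and the anti-homomorphism property of $S$ with respect to both the product and the coproduct ($S\circ M = M\circ(S\otimes S)\circ c_{H,H}$ and $\Delta\circ S = c_{H,H}\circ(S\otimes S)\circ\Delta$). Since these are exactly the classical Hopf algebra lemmas, the plan is to transcribe the usual arguments into the graphical calculus of tensor networks in $\mathcal{C}$, using only the axioms listed in Definition \ref{def:Hopf monoid} together with the symmetry $c_{H,H}$ and the naturality of the braiding. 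The guiding principle throughout is the standard uniqueness-of-inverses trick in the convolution monoid $\mathrm{Hom}_{\mathcal{C}}(X,H)$ for suitable $X$: any two morphisms that are both left (resp. right) convolution inverses of a common morphism must agree, because convolution is associative and unital with unit $1\circ\epsilon$.

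I would carry out the four pieces in the following order. First, the easy normalizations: applying $\epsilon$ to the antipode axiom $M\circ(S\otimes\mathrm{id})\circ\Delta = 1\circ\epsilon$ and using that $\epsilon$ is an algebra map together with counitality yields $\epsilon\circ S = \epsilon$; dually, precomposing the same axiom with $1$ and using that $1$ is a coalgebra map gives $S\circ 1 = 1$. Second, the anti-multiplicativity of $S$: consider the three morphisms $N := M\circ(S\otimes S)\circ c_{H,H}$, the morphism $S\circ M$, and $M$ itself, all in $\mathrm{Hom}_{\mathcal{C}}(H\otimes H, H)$; one checks by a diagram chase — expanding $\Delta_{H\otimes H} = (\mathrm{id}\otimes c\otimes\mathrm{id})\circ(\Delta\otimes\Delta)$, sliding the braiding past the (co)products by naturality, and collapsing pairs $M\circ(S\otimes\mathrm{id})\circ\Delta$ into $1\circ\epsilon$ — that $N$ is a left convolution inverse of $M$ in $\mathrm{Hom}_{\mathcal{C}}(H\otimes H,H)$, while $S\circ M$ is a right convolution inverse of $M$; associativity of convolution then forces $N = S\circ M$. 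Third, the anti-comultiplicativity of $S$ is obtained by the formally dual argument in $\mathrm{Hom}_{\mathcal{C}}(H, H\otimes H)$, exchanging the roles of $(M,1)$ and $(\Delta,\epsilon)$ and of left/right; here the symmetry $c_{H,H} = c_{H,H}^{-1}$ is what lets the dual computation close up without sign or twist corrections. Finally, I would remark that the same chases show $S$ is uniquely determined and, if needed later, record $S\circ S$-type corollaries, though these are not part of the present statement.

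The only genuinely delicate point is bookkeeping with the braiding: in $\mathcal{C}$ one cannot simply permute tensor factors at will, so each time a $c_{H,H}$ is moved across an $M$, a $\Delta$, or an $S$ one must invoke the precise naturality square for the symmetric structure, and one must be careful that the trivial-twists hypothesis and the pivotal structure are never secretly needed (they are not — these identities hold in any braided, indeed symmetric, monoidal category with a Hopf monoid). I therefore expect the main obstacle to be purely presentational: organizing the graphical proof so that every bend, cap, and crossing is justified by one of the stated axioms or by naturality, rather than any real mathematical difficulty. Because all of this is classical — see \cite{Ku2} for the vector-space case, whose proofs are diagrammatic and transfer verbatim — I would keep the write-up to the two key diagram chases (anti-multiplicativity and anti-comultiplicativity) and dispatch the unit/counit identities in one line each.
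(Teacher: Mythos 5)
Your proposal is correct, and it is essentially the argument the paper intends: the paper gives no proof of its own for this proposition, deferring to \cite{Ku2}, and your convolution-inverse argument (left/right inverses of $M$ and $\Delta$ in the convolution monoids $\mathrm{Hom}(H\otimes H,H)$ and $\mathrm{Hom}(H,H\otimes H)$, plus the one-line unit/counit normalizations) is exactly the classical proof that transfers verbatim to a Hopf monoid in a symmetric monoidal category. Your observation that neither the pivotal structure nor the trivial-twists hypothesis is needed here is also accurate.
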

where $M^{\text{op}}$ and $\Delta^{\text{op}}$ are defined as follows:
\begin{figure}[H]
  \centering
  \includegraphics{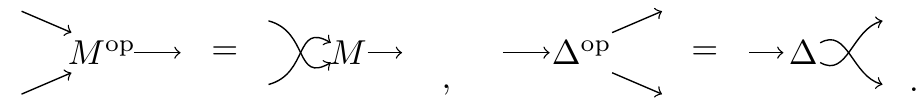}
\end{figure}

\begin{prop}\label{prop:antipode has inv}
  For a Hopf monoid in $\mathcal{C}$, there exists the inverse of the antipode $S$ which satisfies the following:
  \begin{figure}[H]
    \centering
    \includegraphics{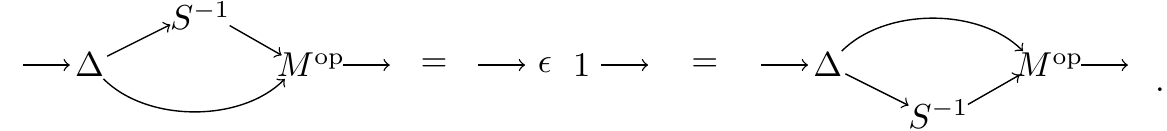}
  \end{figure}
  Since $S$ is an anti-(co)algebra morphism, $S^{-1}$ is also anti-(co)algebra morphism, i.e., we have
  \begin{figure}[H]
      \centering
      \includegraphics{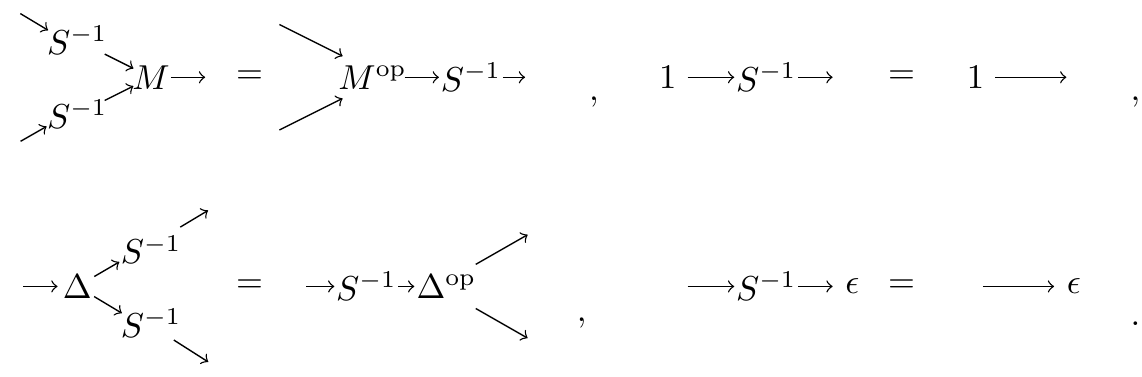}
  \end{figure}
\end{prop}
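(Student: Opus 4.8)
The plan is built around the observation that the property of $S^{-1}$ asserted in the statement is exactly the antipode axiom for the co-opposite Hopf monoid $H^{\mathrm{cop}}=(H,M,1,\Delta^{\mathrm{op}},\epsilon)$, i.e.\ a morphism $\bar S\co H\to H$ with $M\circ(\bar S\otimes\mathrm{id}_H)\circ\Delta^{\mathrm{op}}=1\circ\epsilon=M\circ(\mathrm{id}_H\otimes\bar S)\circ\Delta^{\mathrm{op}}$. So the proposition amounts to: (i) such a $\bar S$ exists; (ii) it is a two-sided composition inverse of $S$, which justifies writing $\bar S=S^{-1}$; (iii) $S^{-1}$ is an anti-algebra and an anti-coalgebra morphism.

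Part (iii) is formal once (ii) is known: composing the identities $S\circ M=M^{\mathrm{op}}\circ(S\otimes S)$ and $\Delta\circ S=(S\otimes S)\circ\Delta^{\mathrm{op}}$ of Proposition \ref{prop:antipode is antialgebra} with suitable copies of $\bar S=S^{-1}$ on both sides, and using $M^{\mathrm{op}}=M\circ c_{H,H}$, $\Delta^{\mathrm{op}}=c_{H,H}\circ\Delta$, $c_{H,H}^{-1}=c_{H,H}$, yields $S^{-1}\circ M=M^{\mathrm{op}}\circ(S^{-1}\otimes S^{-1})$ and $\Delta\circ S^{-1}=(S^{-1}\otimes S^{-1})\circ\Delta^{\mathrm{op}}$.

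For part (ii), here is a computation to be applied twice. Let $L$ be a Hopf monoid in $\mathcal{C}$ with antipode $S_L$, and suppose $L^{\mathrm{cop}}$ has an antipode $\bar S_L$. Applying $S_L$ to $M_L\circ(\bar S_L\otimes\mathrm{id}_L)\circ\Delta_L^{\mathrm{op}}=1_L\circ\epsilon_L$ and using that $S_L$ is anti-multiplicative with $S_L\circ 1_L=1_L$ (Proposition \ref{prop:antipode is antialgebra}), naturality of the symmetry, and $c_{L,L}\circ\Delta_L^{\mathrm{op}}=\Delta_L$, one rewrites the left-hand side as $M_L\circ\bigl(S_L\otimes(S_L\circ\bar S_L)\bigr)\circ\Delta_L$; thus $S_L*(S_L\circ\bar S_L)=1_L\circ\epsilon_L$ in the convolution algebra $\bigl(\mathrm{Hom}_{\mathcal{C}}(L,L),*\bigr)$ with $f*g=M_L\circ(f\otimes g)\circ\Delta_L$. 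Since $\mathrm{id}_L$ is a two-sided $*$-inverse of $S_L$ (the antipode axiom for $L$), cancelling $S_L$ on the left gives $S_L\circ\bar S_L=\mathrm{id}_L$. Apply this with $L=H$ to get $S\circ\bar S=\mathrm{id}_H$; apply it again with $L=H^{\mathrm{cop}}$ — a Hopf monoid with antipode $\bar S$, whose co-opposite is $H$ with the given antipode $S$ — to get $\bar S\circ S=\mathrm{id}_H$. Hence $\bar S=S^{-1}$.

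The genuinely non-formal step is (i), the existence of the antipode $\bar S$ of $H^{\mathrm{cop}}$, equivalently the invertibility of $\mathrm{id}_H$ in the convolution algebra built from $M$ and $\Delta^{\mathrm{op}}$. This fails for general Hopf monoids and is where rigidity of $\mathcal{C}$ — the categorical substitute for finite-dimensionality — must be used. The plan is the Larson–Sweedler strategy: the dual $H^*$ is again a Hopf monoid in $\mathcal{C}$ (the pivotal structure with trivial twists making its axioms go through) and $H^{**}\cong H$; a Fundamental-Theorem-of-Hopf-modules computation — purely diagrammatic, using only the given antipode $S$ — combined with rigidity identifies the object of left integrals of $H^*$ with $\mathbb{I}$, and a nonzero integral then yields $\bar S$, or directly the bijectivity of $S$. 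All these manipulations hold verbatim for Hopf monoids in a symmetric pivotal category with trivial twists; for finite-dimensional Hopf algebras they are carried out in \cite{Ku2}. I expect this integral / Fundamental-Theorem argument to be the main obstacle, the rest being routine bookkeeping with the symmetry and the convolution product.
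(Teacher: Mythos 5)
Your proposal is correct and rests on essentially the same input as the paper: the paper gives no in-text proof of this proposition, deferring to Kuperberg \cite{Ku2}, and your only non-formal step (i) — existence of the skew antipode coming from rigidity/integral theory — is exactly what that citation supplies, while your steps (ii) and (iii) are the routine convolution and symmetry bookkeeping. One cosmetic caveat: in a general symmetric pivotal category the object of integrals need only be invertible rather than isomorphic to $\mathbb{I}$ (e.g.\ $\Lambda(V)$ in $\text{SVect}_{\mathbb{K}}$ with $\dim V$ odd), but this does not affect the existence/bijectivity argument for $S$.
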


The group algebra $\mathbb{K}[G]$ for a finite group $G$ is a basic example of a finite dimensional involutory Hopf algebra.  
Here we see two more examples; 
 Example \ref{quantum Borel subalgebra} is a non-involutory Hopf algebra, and Example \ref{ex:exterior algebra } is a involutory Hopf monoid in $\text{SVect}_{\mathbb{K}}$ which is not a Hopf algebra.

\begin{ex}
  \textbf{Small quantum Borel subalgebra $u_q(\mathfrak{sl}_2^+)$}
  \label{quantum Borel subalgebra}

  Let $q \in \mathbb{C}$ be a fixed $n$-th primitive root of unity.
  The small quantum group $u_q(\mathfrak{sl}_2^+)$ associated with the Borel subalgebra $\mathfrak{sl}_2^+$ of the Lie algebra $\mathfrak{sl}_2$ is the algebra over $\mathbb{C}$ defined by  generators $E, K$ and relations
  \begin{align*}
  KE=qEK, \quad E^n=0, \quad K^n=1.
  \end{align*}
 The Hopf algebra structure on $u_q(\mathfrak{sl}_2^+)$ is defined by
  \begin{gather*}
  \Delta(E)=E\otimes 1+K\otimes E ,\quad\Delta(K)=K\otimes K,\\
  \epsilon(E)=0,\quad \epsilon(K)=1,\quad S(E)=-K^{-1}E,\quad S(K)=K^{-1}.
  \end{gather*}
  In general  we have a Hopf algebra $u_q(\mathfrak{g}^+)$
for any complex simple Lie algebra $\mathfrak{g}$.
\end{ex}

\begin{ex}
  \textbf{Exterior algebra $\Lambda(V)$}
  \label{ex:exterior algebra }

  Let $V$ be a finite dimensional vector space, and let $\Lambda(V)$ be its exterior algebra.
  For $X_i\in V\subset \Lambda(V)$, set $\text{deg}(X_1\wedge\cdots\wedge X_p)=1$ for $p$ odd and $\text{deg}(X_1\wedge\cdots\wedge X_p)=0$ for $p$ even.
  Then $\Lambda(V)$ becomes a super vector space, i.e. an object in the symmetric pivotal category $\text{SVect}_{\mathbb{K}}$ in Example \ref{ex:category of super vector spaces}.
  The Hopf monoid structure is defined by
  \begin{center}
  $\Delta(X)=X\otimes 1+1\otimes X \quad ,\epsilon(X)=0, \quad S(X)=-X$,\\
  \end{center}
  where $X\in V\subset \Lambda(V)$.
\end{ex}

\section {Invariant of integral normal o-graphs}
\label{sec:IF}
In this section, we introduce integral normal o-graphs and  construct their invariant up to certain moves. We will see in  Section \ref{sec:Branched standard spine} that this invariant  gives an invariant of closed framed $3$-manifolds with the vanishing first Betti number.

\subsection{Integral normal o-graph and integral 0-2 move, integral MP-move and H-move}
\label{sec:BP-diagrams}
\begin{defi}[Integral normal o-graph]
\label{def:bp}
An \textit{\textbf{integral normal o-graph}} is an oriented virtual link diagram with an integer weight attached on each edge, i.e., 
a finite connected $4$-valent graph $\Gamma$ immersed in $\mathbb{R}^2$, where the self-intersections are transverse double points,  with the following conditions:
\begin{spacing}{0.5}
\end{spacing}
\begin{description}
\setlength{\itemsep}{1mm}
\setlength{\parskip}{1mm} 
\item[N1] At each vertex, a sign $+$ or $-$ is indicated, which is presented by the over-under notation as in Figure \ref{fig:vertex},
\item[N2] Each edge is oriented, and  the orientations of two edges which are opposite to each other at a vertex match,
\item[W1] To each edge an integer is attached.
\end{description}
\begin{figure}[ht]
  \centering
  \includegraphics{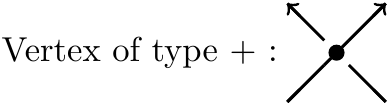}
  \hspace{20mm}
  \includegraphics{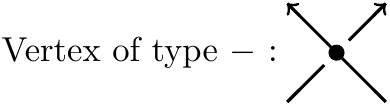}
  \caption{Type of vertices.}
  \label{fig:vertex}
\end{figure}
\end{defi}

Note that there are two kind of crossings: real crossings (i.e., a vertices of the graph)  and  virtual crossings (i.e.,  singular points of the immersion of the graph).
When we refer to a crossing, we mean a real crossing. 
We consider integral normal o-graphs modulo Reidemeister type moves in Figure \ref{fig:RM},  and  denote the set of equivalent classes by $\mathcal{IG}$.

\begin{figure}[H]
    \centering
    \includegraphics{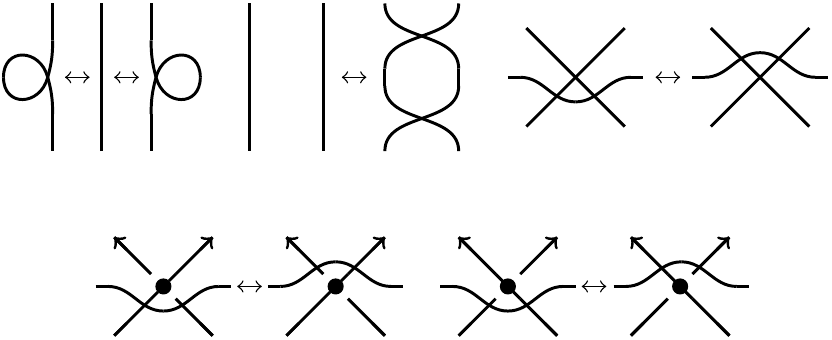}
    \caption{Reidemeister type moves.}
    \label{fig:RM}
  \end{figure}

We define the following three types of  moves on $\mathcal{IG}$ and denote by $\sim$ the generated equivalence relation.

\begin{itemize}
\setlength{\itemsep}{1mm}
\setlength{\parskip}{1mm} 
\item integral 0-2 move (Figure \ref{fig:0-2 move})
\item integral MP-move (Figure \ref{fig:MP-move})
\item H-move (Figure \ref{fig:H move})
\end{itemize}

  \begin{figure}[H]
  \begin{minipage}[b]{0.4\linewidth}
    \centering
    \includegraphics[scale=0.9]{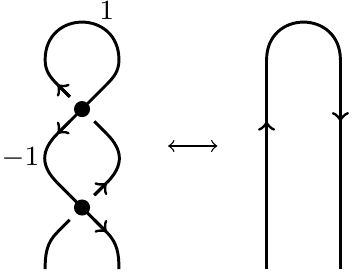}
    \caption{Integral 0-2 move.}
    \label{fig:0-2 move}
  \end{minipage}
\end{figure}
\begin{figure}[H]
    \centering
    \includegraphics[scale=0.8]{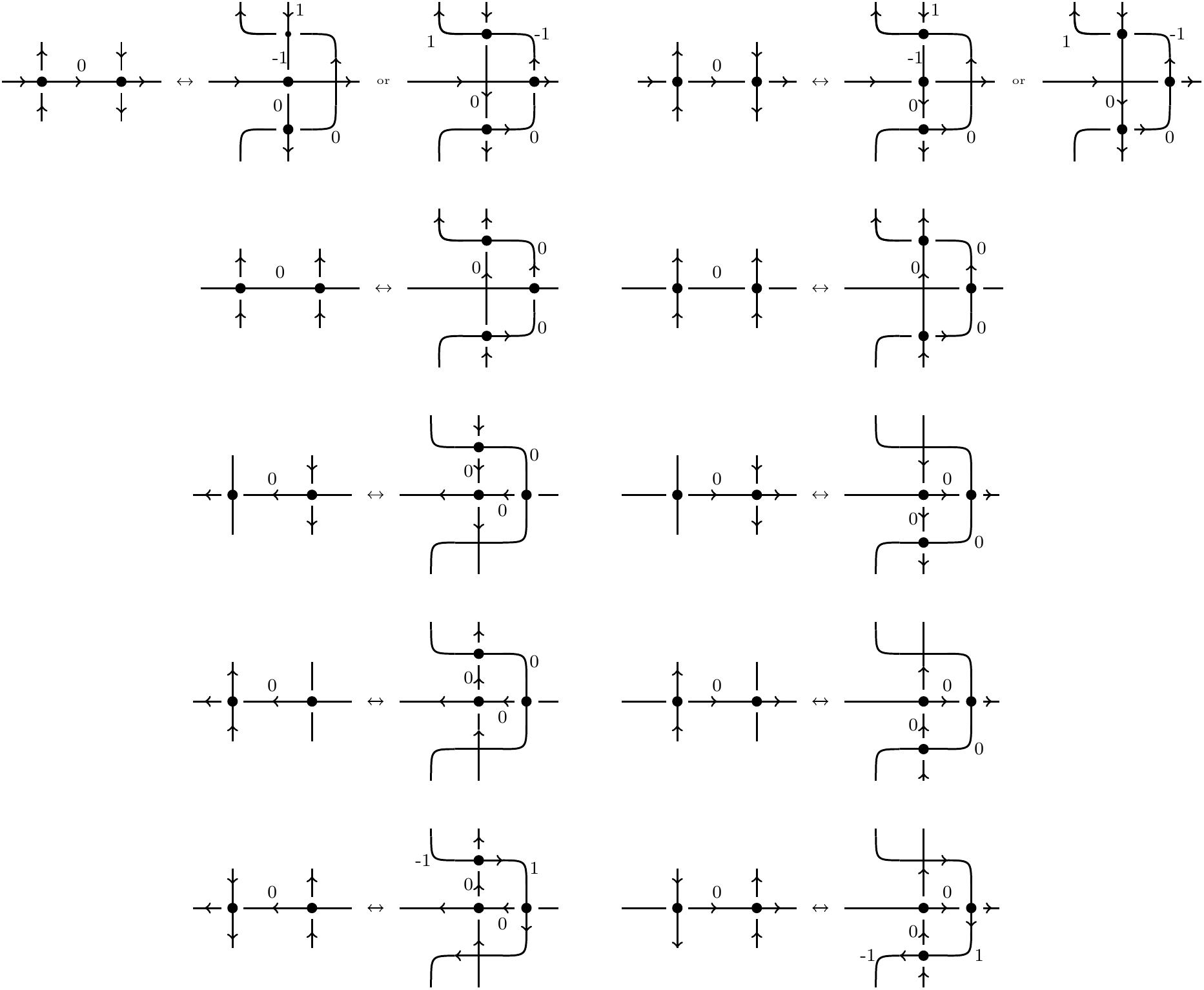}
    \caption{Integral MP-move.}
    \label{fig:MP-move}
\end{figure}

\begin{figure}[H]
    \centering
    \includegraphics{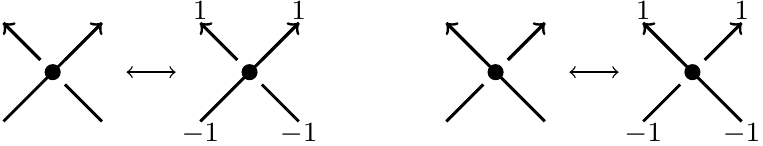}
    \caption{H-move.}
    \label{fig:H move}
\end{figure}

\noindent Here, in Figure \ref{fig:MP-move}, the orientations of the non-oriented edges are arbitrary if they match before and after the move. If there are multiple weights on an edge after the move, the weights should be added in the additive group $\mathbb{Z}$. 

\subsection{Definition of invariant}
\label{sec:Invariant}
Let $H=(H,M,1,\Delta,\epsilon,S)$  be a Hopf monoid  in a symmetric pivotal category with trivial twist.
We construct the invariant 
\begin{align*}
Z(-;H) \co \mathcal{IG} \ \to \ \text{End}(\mathbb{I})
\end{align*}
through the following two steps. First, we replace each vertex and edge with a tensor network shown in the Figure \ref{fig:Definition of the functor}. Second, we connect the legs of tensor networks according to how the integral normal o-graph was connected.
This defines a tensor network without any incoming or outcoming edge, i.e., an element $Z(\Gamma;H)\in \text{End}(\mathbb{I})$.
In the case of $\mathcal{C}=\text{Vect}_{\mathbb{K}}^{\text{Fin}}$, i.e., when $H$ is a Hopf algebra, we have a scalar $Z(\Gamma;H)\in\mathbb{K}$.

\begin{figure}[H]
  \centering
  \includegraphics{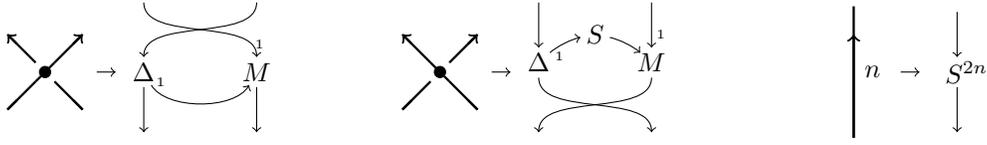}
  \caption{Definition of the invariant $Z(-;H)$.}
  \label{fig:Definition of the functor}
\end{figure}

\begin{ex}
The integral normal o-graph below will represent framed $S^3$, see Section \ref{Example of closed BP diagram}.  If $\mathcal{C}=\text{Vect}_{\mathbb{K}}^{\text{Fin}}$, then the invariant  $\text{Tr}(S^{2}\circ M^{\text{op}}\circ (\text{id}_{H}\otimes S^{4})\circ\Delta)$ turns out to be a distinguished scalar element in $\mathbb{K}$ obtained by integrals of the Hopf algebra, see
 Example \ref{ex:invariant of S3}.
\begin{figure}[H]
    \centering
    \includegraphics{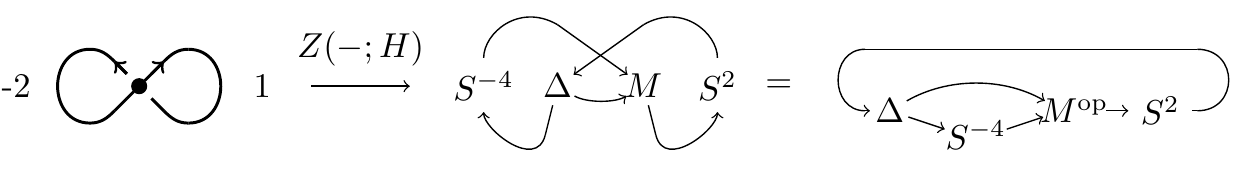}
    \label{fig: example of invariant}
\end{figure}
\end{ex}

\begin{thm}
Let $H$ be a Hopf monoid in a symmetric pivotal category $\mathcal{C}$ with trivial twists. For an integral normal o-graph $\Gamma$, $Z(\Gamma;H)$ is invariant under the moves in Figure \ref{fig:RM} to Figure \ref{fig:H move}, i.e.,  the map
\begin{align*}
    Z(-;H)\co \mathcal{IG}/\sim \ \to \ \text{End}(\mathbb{I})
\end{align*} is well-defined.
\end{thm}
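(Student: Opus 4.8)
The plan is to verify invariance move-by-move, using the graphical calculus of tensor networks in $\mathcal{C}$ and the Hopf monoid axioms together with Propositions \ref{prop:antipode is antialgebra} and \ref{prop:antipode has inv}. The general principle is that each move in Figures \ref{fig:RM}--\ref{fig:H move} is local: outside a small disk the two diagrams agree, so it suffices to check that the two tensor networks obtained by substituting the assignments of Figure \ref{fig:Definition of the functor} into the two local pictures represent the \emph{same} morphism in $\mathcal{C}$, with matching boundary legs. Because $\mathcal{C}$ is symmetric pivotal with trivial twists, the graphical calculus is sound: isotopic diagrams (including routing strands through virtual crossings, i.e. the symmetry $c$) give equal morphisms, so the Reidemeister-type moves of Figure \ref{fig:RM} are automatic once one checks that the vertex assignment is compatible with the cyclic/planar conventions — this is where trivial twists and pivotality are used, exactly as in the reconstruction of the universal invariant.

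\textbf{The substantive moves.} First I would treat the integral $0$-$2$ move (Figure \ref{fig:0-2 move}): substituting the vertex tensors for a $+$ and a $-$ vertex that are cancelling, the network collapses using $S S^{-1} = \mathrm{id}$ (Proposition \ref{prop:antipode has inv}) together with the (co)unit axioms and the defining zig-zag identities of the duals; the integer weights on the two new edges must sum to zero, which is precisely the bookkeeping that makes the weighted caps compose to the identity. Next, the integral MP-move (Figure \ref{fig:MP-move}) is the heart of the matter: it is the graphical incarnation of the Pachner $(2,3)$ move and hence should reduce, after expanding the vertex tensors, to the pentagon-type identity. Concretely I would rewrite both sides using coassociativity, associativity, the compatibility of $\Delta$ with $M$ (the bialgebra axiom), and the fact that $S$ and $S^{-1}$ are anti-(co)algebra morphisms, pushing all antipodes to the boundary; the weights again add in $\mathbb{Z}$, matching the additivity built into the move. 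Finally the H-move (Figure \ref{fig:H move}) records the interplay between a crossing change and a shift of weights, so I would check it by sliding an $S^2$ past a vertex: expanding both sides, the discrepancy between the two crossing signs is absorbed by changing the edge weight by the appropriate integer, using $M^{\mathrm{op}} = M \circ c$ and the anti-(co)algebra property of $S$.

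\textbf{Organization and the main obstacle.} I would organize the proof as a sequence of lemmas, one per figure, each reduced to a short graphical computation; the Reidemeister moves of Figure \ref{fig:RM} and the isotopy invariance come essentially for free from the axioms of a symmetric pivotal category with trivial twists, so they can be dispatched quickly, leaving the $0$-$2$, MP-, and H-moves as the real content. The main obstacle I anticipate is the MP-move: one must expand the vertex tensor (which packages $M$, $\Delta$ and powers of $S$ determined by the orientations and the sign, as in Figure \ref{fig:Definition of the functor}) on both the two-vertex and the three-vertex sides, normalize the many antipodes appearing at internal edges using Propositions \ref{prop:antipode is antialgebra} and \ref{prop:antipode has inv}, and recognize the resulting identity as the pentagon equation $T_{12}T_{13}T_{23} = T_{23}T_{12}$ for the canonical element of the Heisenberg double (or, equivalently, verify it directly from the bialgebra and antipode axioms); keeping track of the orientation-dependent powers of $S$ and the signs, so that the two sides match \emph{on the nose} including all boundary decorations, is the delicate part. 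A secondary bookkeeping point throughout is that every move preserves the total weight modulo the relations, so the weighted edges — which carry powers of $S^{\pm 2}$ — contribute consistently; this is routine once the additive convention of the move is respected.
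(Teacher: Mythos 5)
Your plan is essentially the paper's proof: the paper also verifies each move locally in the graphical calculus, getting the Reidemeister-type moves from trivial twists and the (natural, invertible) symmetry, the integral $0$-$2$ move and the H-move from the fact that $S^2$ is a (co)algebra morphism together with Proposition \ref{prop:antipode has inv}, and the integral MP-move from the Hopf monoid axioms and the anti-(co)algebra property of $S$ (equivalently, the pentagon equation for the canonical element of the Heisenberg double, as spelled out in Appendix \ref{Heisenberg double construction}). Two small points of comparison: the paper splits the sixteen MP-moves into those with all weights $0$ (handled by citing the computations of \cite{S} and \cite{MST}, which generalize verbatim to Hopf monoids) and those with two weights $\pm1$ (proved directly, one representative case), rather than treating them uniformly as you propose; and the H-move involves no crossing change --- it only shifts the edge weights by a coboundary --- but the $S^2$-sliding mechanism you describe is exactly how the paper checks it.
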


\begin{proof}

\textbf{Invariance under the Reidemeister type moves (Figure \ref{fig:RM}).} The invariance under the RI type move follows from the fact that  $\mathcal{C}$ has trivial twists (cf. Definition \ref{def:trivial twists}).  That of the RII and the RIII type moves are from the  invertibility and the naturality, respectively,  of the symmetry.

\textbf{Invariance under the integral 0-2 move (Figure \ref{fig:0-2 move}).}
For the integral 0-2 move, we have
\begin{figure}[H]
    \centering
    \includegraphics{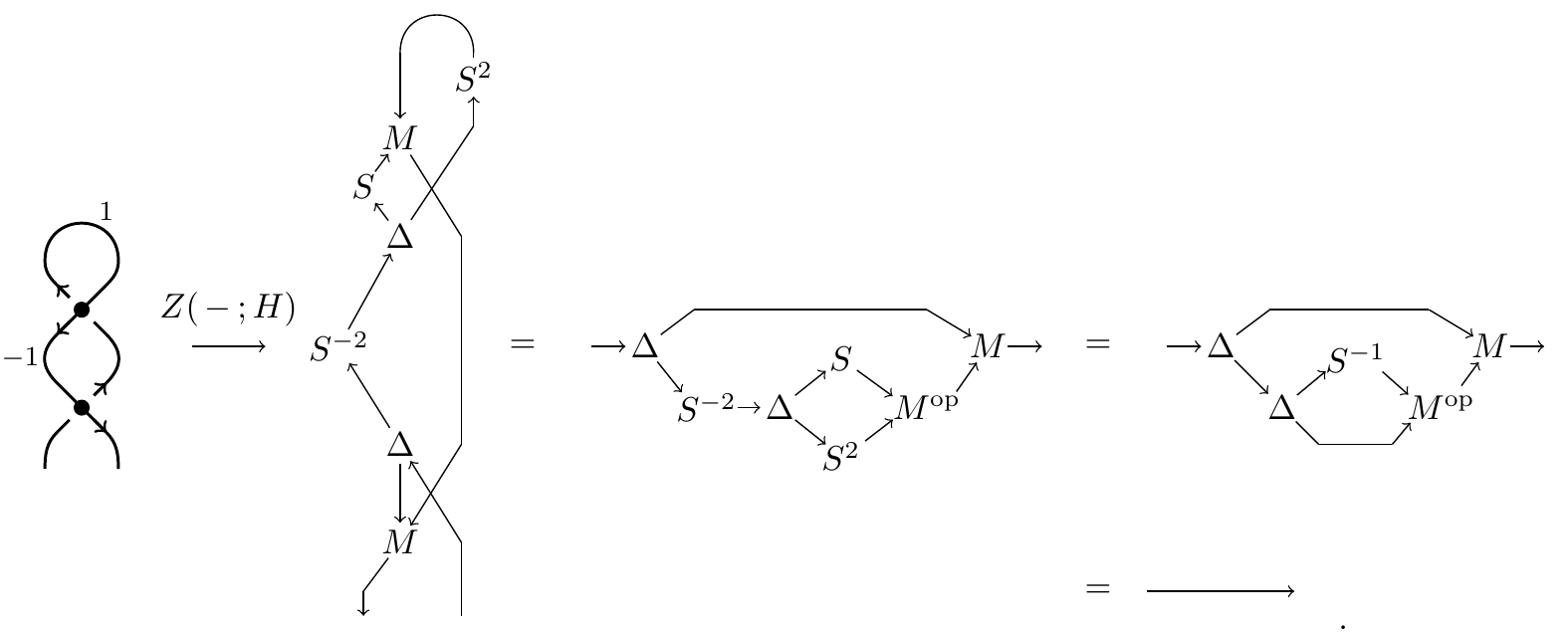}
\end{figure}
\noindent Here,  the second equality follows from the fact that $S^2$ is a coalgebra morphism, and the third equality follows from Proposition \ref{prop:antipode has inv}.

\textbf{Invariance under the MP-move (Figure \ref{fig:MP-move}).}
We divide the 16 MP-moves into two types: ones with all the edges colored by 0 (Type A) and the others with two edges colored by $\pm1$ (Type B).
The moves of Type B are in the top and the bottom row in Figure \ref{fig:MP-move},  and the moves of Type A are in the middle rows.
In \cite{S} and \cite{MST}, it was proved, in terms of the canonical element of the Heisenberg double of the Hopf algebra, that $Z(-;H)$ is invariant under the MP-moves of Type A for any finite dimensional Hopf algebra. These proofs can be easily generalized to Hopf monoids in  symmetric pivotal categories with trivial twists.
Thus we need only to show the invariance under the MP-moves of Type B.  

We prove the top left move in Figure \ref{fig:MP-move}. We can prove the other moves similarly. For the LHS, we have

\begin{figure}[H]
    \centering
    \includegraphics[scale=0.9]{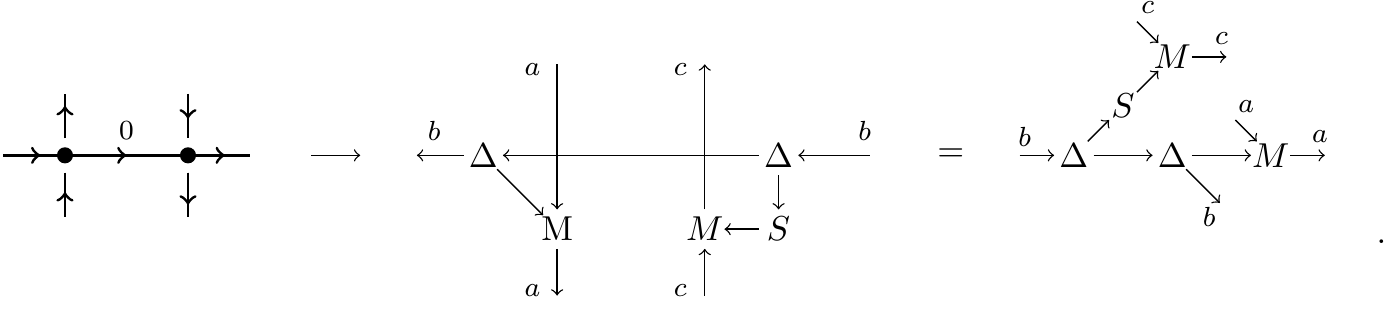}
\end{figure}
\noindent 
\noindent Here, the labels $a, b ,c$ are attached to endpoints so that we can trace them under deformations of the tensor networks.
For the RHS,  we have

\begin{figure}[H]
    \centering
    \includegraphics[scale=0.9]{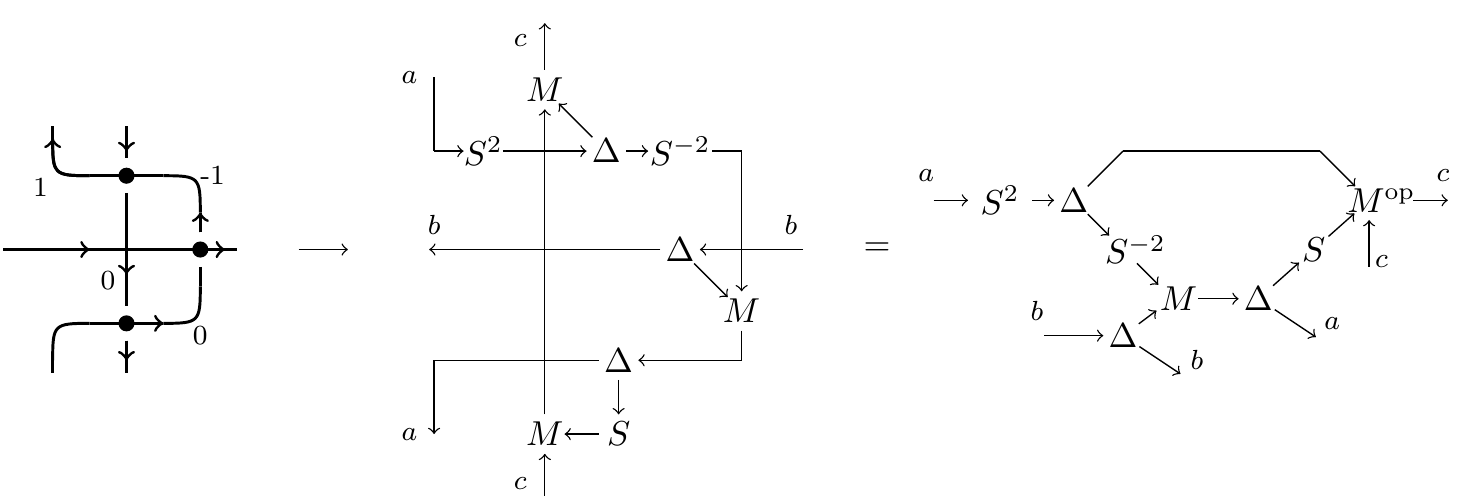}
\end{figure}
\begin{figure}[H]
    \centering
    \includegraphics[scale=0.9]{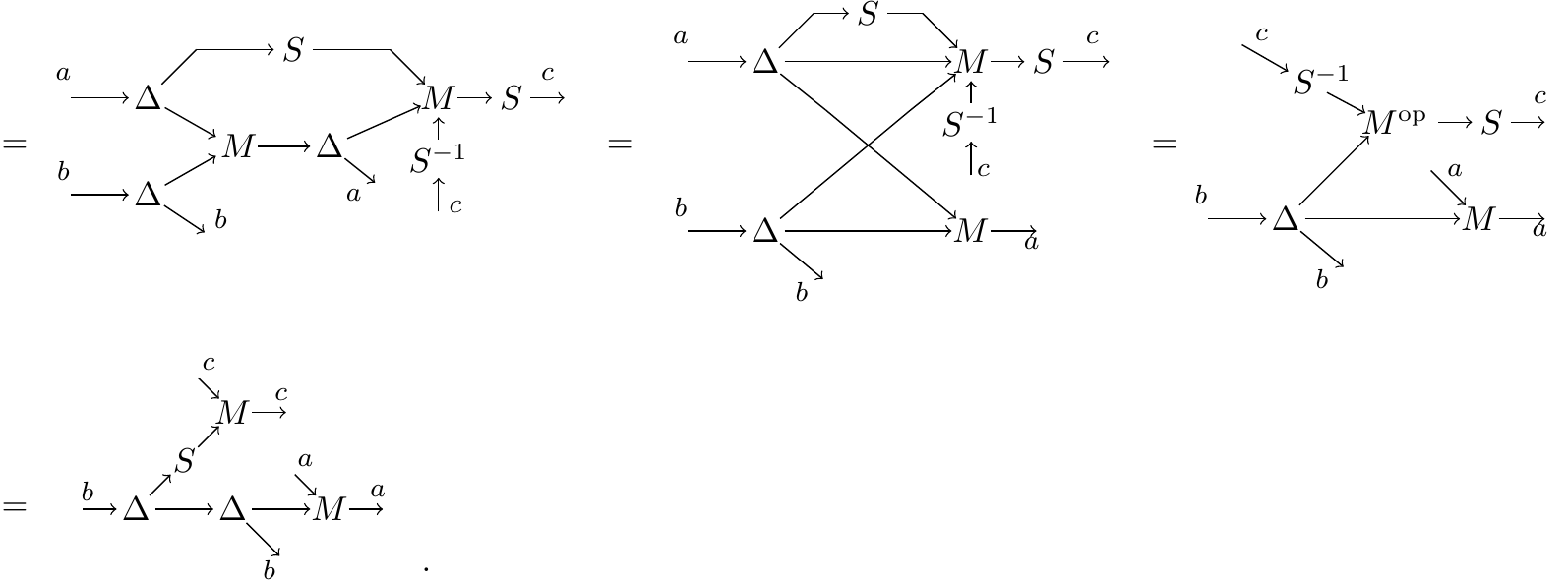}
\end{figure}

\noindent  Here the first equality follows from the isotopy invariance of tensor networks.
Other equalities follow from the axiom of Hopf monoids and the fact that the antipode $S$ is an anti-(co)algebra morphism. 

\textbf{Invariance under the  H-move (Figure \ref{fig:H move}).}
For H-move, by the fact that $S^2$ is a (co)algebra morphism, we have
\begin{figure}[H]
    \centering
    \includegraphics{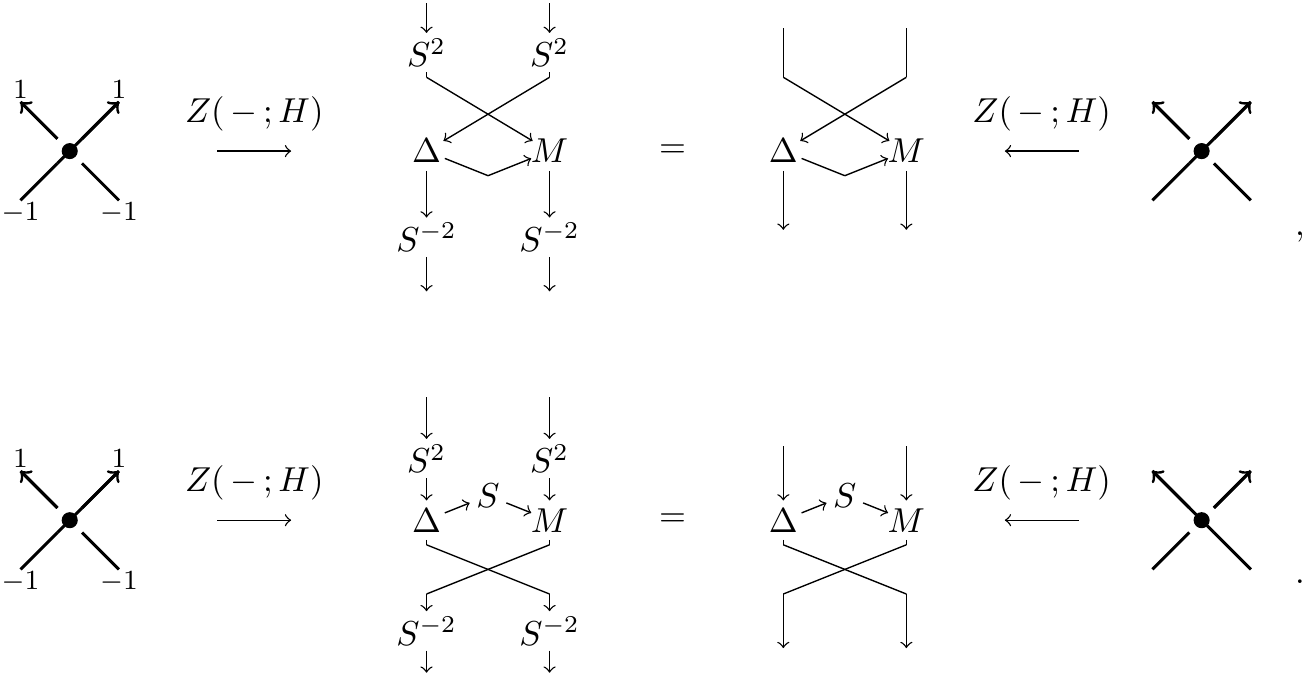}
\end{figure}

\end{proof}

\begin{rem}
The above construction can be thought as a generalization of the invariants constructed in \cite{MST} using involutory Hopf algebras; when $S^2=\text{id}_H$, the invariant does not depend on the integer weights and the construction  coincides with the one in \cite{MST}.
In \cite{MST}, we constructed the invariant as a functor from the category of ``o-tangles", and we can similarly extend the above construction to a functor from the category of ``integral normal o-tangles".
\end{rem}

\section{Framed integral normal o-graph and closed framed $3$-manifold}
\label{sec:Branched standard spine}
In this section, we introduce a subset  $\mathcal{FIG}\subset \mathcal{IG}$ consisting of \textit{\textbf{framed integral normal o-graphs}} and show that elements of $\mathcal{FIG}$ represent equivalent classes of closed framed 3-manifolds, i.e.,  we construct a surjective map 
\begin{align*}
    \Phi_{\text{fram}}\co \mathcal{FIG} \to \mathcal{M}_{\text{fram}},
\end{align*} 
 where  $\mathcal{M}_{fram}$ is the set of equivalent classes of closed framed $3$-manifolds (see Section \ref{subsec: framing} for the details).  
Then we prove the following.
\begin{prop}\label{prop: framed 3-mfd BP-diagram}
Let $\mathcal{M}^0_{fram}\subset \mathcal{M}_{fram}$ be the subset consisting of closed framed $3$-manifolds with the vanishing first Betti number, and $\mathcal{FIG}^0\subset \mathcal{FIG}$  the inverse image of $\mathcal{M}^0_{fram}$ by $\Phi_{\text{fram}}$.
The restriction of  $\Phi_{\text{fram}}$ induces a bijection
\begin{align*}
    \Phi^0_{\text{fram}}\co \mathcal{FIG}^0/\sim \ \to \ \mathcal{M}^0_{\text{fram}},
\end{align*} 
where $\sim$ is the restriction of the equivalent relation  on $\mathcal{IG}$ defined in Section \ref{def:bp}.
\end{prop}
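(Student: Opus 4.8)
The plan is to adapt the argument from Benedetti--Petronio \cite{BP}, which establishes the analogous statement for $\mathbb{Z}/2\mathbb{Z}$-weighted normal o-graphs and closed combed $3$-manifolds, to the present setting of integer weights and framings. First I would set up the map $\Phi_{\text{fram}}$ carefully: a framed integral normal o-graph is the $1$-skeleton of a branched standard spine of a closed $3$-manifold $M$ (equivalently, the dual of an ideal triangulation), the branching/vertex signs encoding a combing on $M$, and the integer weight on an edge recording how many full twists of the framing relative to the combing-induced trivialization occur along that edge. The underlying normal o-graph (forgetting weights, or reducing them mod $2$) already determines, by \cite{BP}, a closed combed $3$-manifold; the integer weights then refine the combing to a genuine framing of $TM$, since a framing is the same data as a combing together with a trivialization of the orthogonal $2$-plane field, and the latter is measured by integers along the edges of the spine where the obstruction lives. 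This defines $\Phi_{\text{fram}}$, and surjectivity onto $\mathcal{M}^0_{\text{fram}}$ follows because every closed oriented $3$-manifold with $b_1 = 0$ has a spine and one can realize any framing by choosing the weights appropriately — here the $b_1 = 0$ hypothesis is what guarantees that the relevant cohomological obstruction to matching a prescribed framing lies in a group where the edge-weights suffice.

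Next I would prove well-definedness and injectivity of $\Phi^0_{\text{fram}}$ on $\sim$-classes. The key input is again from \cite{BP}: two branched standard spines represent the same combed $3$-manifold if and only if they are related by a finite sequence of local moves — the MP-move (dual to the Pachner $(2,3)$ move), the $0$-$2$ move (a bubble move), and the moves that change the branching — together with the Reidemeister-type moves coming from the planar immersion. I would check that each of these, in the integral-weight refinement, is exactly one of the moves in Figures \ref{fig:RM}--\ref{fig:H move}: the integral $0$-$2$ move and integral MP-move are the weight-decorated versions of the corresponding \cite{BP} moves, with the weight bookkeeping (weights add in $\mathbb{Z}$ when edges merge) dictated precisely by how the framing trivializations compose; the H-move is the move that shifts a unit of twist past a vertex, i.e.\ it realizes the relation among framings that have the same underlying combing but differ by a ``global'' adjustment, and it is needed precisely because with integer (rather than $\mathbb{Z}/2$) weights there is more to quotient by. Conversely, each such move visibly preserves the represented framed manifold. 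So $\sim$-classes map bijectively onto equivalence classes of (spine, combing, twist-data), which by the framing-vs-combing dictionary are exactly closed framed $3$-manifolds with $b_1 = 0$.

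The main obstacle I expect is the framing/combing bookkeeping: verifying that the integer weights genuinely capture the framing datum and not something coarser or finer, and that the relations imposed by the H-move (and by the weight-addition convention in the MP- and $0$-$2$-moves) are exactly the relations among framings rather than too many or too few. Concretely, one must show: (i) changing the weights on a contractible configuration of edges by the H-move does not change the homotopy class of the framing — this uses that the obstruction to homotoping framings lives in $H^1(M;\mathbb{Z}) \oplus H^2(M;\mathbb{Z}) \oplus \cdots$ and that, with $b_1 = 0$, the degree-$1$ part vanishes so the combing plus a finite amount of integer twist-data is a complete invariant; and (ii) the $b_1 = 0$ hypothesis is genuinely used, namely that when $b_1 > 0$ the map $\Phi_{\text{fram}}$ need not be injective on $\sim$-classes because one can have nontrivial ``winding'' of the framing around a $1$-cycle that no finite sequence of local moves detects. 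I would isolate this as a lemma computing the set of framings with a fixed underlying combing in terms of edge-weights modulo the H-move, reducing \ref{prop: framed 3-mfd BP-diagram} to the already-established combed case of \cite{BP} plus this lemma. The remaining steps — that the Reidemeister-type moves correspond to isotopies of the immersion and that the $0$-$2$/MP-moves are the standard spine moves — are routine given \cite{BP}.
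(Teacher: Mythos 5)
Your overall architecture matches the paper's: identify framed integral normal o-graphs with pairs $(P,x)$ of a closed branched spine and an integral $1$-cochain with $\delta x=-c_P$, invoke the Benedetti--Petronio combed calculus (Proposition \ref{prop:combed mfd and branched spine}) for the underlying spines, check that the integral $0$-$2$ and MP-moves preserve the framing, and settle the remaining ambiguity in the weights by a lemma classifying framings over a fixed combing in terms of edge-weights modulo the H-move. But that lemma is exactly where the content lies, and your justification for it is not correct as stated. You write that the obstruction to homotoping framings lives in $H^1(M;\mathbb{Z})\oplus H^2(M;\mathbb{Z})\oplus\cdots$ and that $b_1=0$ kills the degree-one part. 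For comparing two arbitrary framings (a map $M\to SO(3)$) the primary obstruction has coefficients $\pi_1(SO(3))=\mathbb{Z}_2$, and $H^1(M;\mathbb{Z}_2)$ does \emph{not} vanish when $b_1=0$ (e.g.\ $L(2,1)$, one of the paper's main examples); only for framings sharing the \emph{same} first vector does the comparison reduce to $[M,S^1]=H^1(M;\mathbb{Z})$. The precise mechanism the paper uses is: two admissible cochains $x,x'$ with $\delta x=\delta x'=-c_P$ give the same framing if and only if $[x-x']$ vanishes in $H^1(P;\mathbb{Z}_2)$ (Lemma \ref{BPlemma}, from \cite{BP}), while the H-move realizes exactly integral coboundaries; the hypothesis $b_1=0$ enters through Lemma \ref{lem:0 mod 2 can be lifted}, which says the kernel of $H^1(M;\mathbb{Z})\to H^1(M;\mathbb{Z}_2)$ is trivial precisely when $b_1=0$, so mod-$2$ triviality can be upgraded to an integral coboundary and hence to a chain of H-moves. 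Without pinning down this mod-$2$ versus integral discrepancy your injectivity argument does not close, and your remark (ii) about winding is the right intuition for why it fails when $b_1>0$, but it is not a substitute for the lemma.

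Two further misplacements. First, $b_1=0$ has nothing to do with surjectivity: for any closed framed $3$-manifold the second vector is encoded by rotation numbers along the $1$-cells, giving a cochain with $\delta x=-c_P$, so $\Phi_{\text{fram}}\colon\mathcal{IP}\to\mathcal{M}_{\text{fram}}$ is onto in full generality, and on $\mathcal{FIG}^0$ surjectivity is automatic since it is defined as a preimage; the hypothesis is used only for injectivity (well-definedness under the H-move likewise needs only the ``if'' direction of Lemma \ref{BPlemma}, valid for all manifolds, since the H-move changes $x$ by a coboundary of a $0$-cochain). Second, the combed calculus consists of the branched $0$-$2$ and branched MP-moves together with the Reidemeister-type moves; there are no separate ``moves that change the branching'' in Proposition \ref{prop:combed mfd and branched spine}, and admitting such moves would change the combing and break the correspondence with the paper's move set. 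Finally, note the order of operations in the injectivity step: equivalence of framed manifolds allows a diffeomorphism and homotopies that move the first vector, so one must first use the combed bijection to carry $(P_1,x_1)$ by integral moves to a pair $(P_2,x_2')$ on the same spine as $(P_2,x_2)$, and only then apply the fixed-spine cohomological comparison; your sketch gestures at this reduction but the fixed-combing lemma alone does not handle that part.
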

As a result, the restriction of the invariant $Z(-,H)$ to $\mathcal{FIG}^0$ gives an invariant of closed framed $3$-manifolds with the vanishing first Betti number.

In Sections \ref{subsec:Branched spine}--\ref{closed normal o-graph} we give an outline of standard arguments in \cite{BP} to represent closed framed $3$-manifolds by graphs.
In Section \ref{subsec:Branched spine} and \ref{subsec:combing of 3-manifold} we recall  branched spines representing combed $3$-manifolds. Section \ref{subsec: framing} is for extending these combings to framings. Then in Section \ref{closed normal o-graph} we recall normal o-graphs  to represent branched spines combinatorially.
 In Section \ref{subsec:BP diagram} we define framed integral normal o-graphs and prove Proposition \ref{prop: framed 3-mfd BP-diagram}, where we modify the arguments in \cite{BP} by lifting $\mathbb{Z}/2\mathbb{Z}$ weights on edges on  normal o-graphs to integer weights.
 We give examples of framed integral normal o-graphs in the last section. In what follows all manifolds and polyhedrons are assumed to be connected.

\subsection{Branched spine and associated vector field}
\label{subsec:Branched spine}

For an introduction to standard and branched spines, see for e.g. \cites{BP,Mat}\footnote{In \cite{Mat}, standard spine is called special spine.}.
A 2-dimensional compact polyhedron $P$ is called \textit{\textbf{simple}} if the neighborhood of each point $x\in P$ is homeomorphic to one of the pictures in Figure \ref{fig:nbh of simple spine}, where from the left in the picture the point $x$ is called a \textit{\textbf{nonsingular point}}, a \textit{\textbf{triple point}}, and a \textit{\textbf{true vertex}}, respectively.

\begin{figure}[H]
    \centering
    \includegraphics{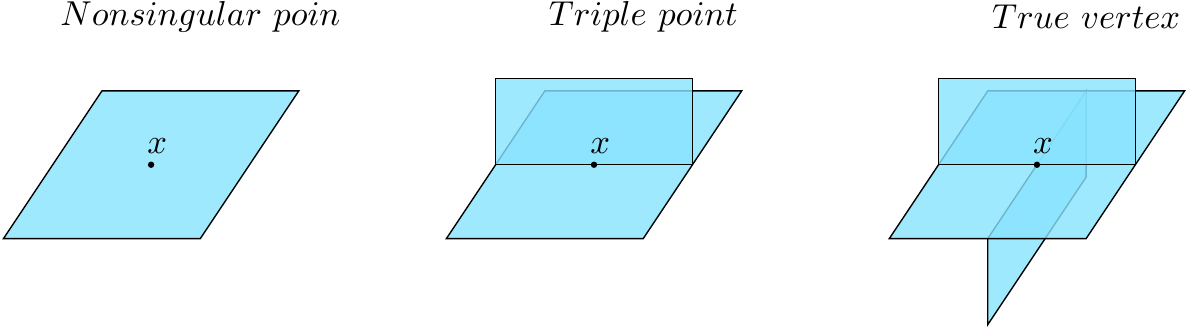}
    \caption{Nonsingular point, triple point, and true vertex.}
    \label{fig:nbh of simple spine}
\end{figure}

\noindent For a simple polyhedron $P$, set
\begin{align*}
  V(P)&=\{x\in P\mid \text{$x$ is a true vertex}\},\\
  S(P)&=\{x\in P\mid \text{$x$ is a true vertex or a triple point}\},\\
  D(P)&=\{x\in P\mid \text{$x$ is a nonsingular point}\}.
\end{align*}

A simple polyhedron $P$ is called \textit{\textbf{standard}} if connected components of $S(P)\backslash V(P)$ and $D(P)$ are 1-cells and 2-cells, respectively.
Let $M$ be a 3-manifold with non-empty boundary.
A standard polyhedron $P$ embedded in $\text{Int}M$ is called a \textit{\textbf{standard spine}} of $M$ if $M$ collapses to $P$.
It is known that every compact 3-manifold with non-empty boundary admits a standard spine \cite{Mat}*{Theorem 1.1.13}.
A standard spine $P$ of $M$ determines the homeomorphism class of $M$, i.e., if $P^{\prime}$ is a standard spine of $M^{\prime}$ which is homeomorphic to $P$, then $M^{\prime}$ is homeomorphic to $M$ \cite{Mat}*{Theorem 1.1.17}. On the other hand, not all standard polyhedrons are spines of 3-manifolds,  and the condition for standard polyhedrons to be spines is described as a combinatorial condition for 2-cells attached to $S(P)$ \cite{BP2}*{Theorem 1.5}. 

For oriented 3-manifolds we can describe rather simply the conditions for standard polyhedrons to be spines.
An \textit{\textbf{oriented standard spine}} is a standard spine of an oriented 3-manifold \cite{BP}*{Proposition 2.1.2}, which can be characterized combinatorially as follows.
For each 1-cell $e$ in $S(P)\backslash V(P)$ we specify a pair $(d(e),o(e))$ of a direction $d(e)$ of $e$ and cyclic ordering $o(e)$ among the (local) three disks attached to $e$, see Figure \ref{fig:orientation of polyhedron}, up to simultaneous reversing $(d(e),o(e))\mapsto (-d(e),-o(e))$. 
Then the standard polyhedron is called \textit{\textbf{oriented}} if the orientation around  each vertex is compatible as in the Figure \ref{fig:oriented standard polyhedron}, i.e., if all of the pairs $(d,o)$ for the four edges attached to the vertex are of the same right- or left-handed type (the type depends on an embedding in $\mathbb{R}^3$ of a neighborhood of the vertex).
Every standard spine of oriented 3-manifold with non-empty boundary is canonically oriented, and an oriented standard spine determines the oriented 3-manifold up to orientation preserving homeomorphism.

\begin{figure}[H]
  \begin{minipage}[b]{0.45\linewidth}
    \centering
    \includegraphics[keepaspectratio, scale=1]{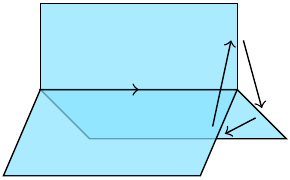}
    \caption{Orientation of polyhedron.}
    \label{fig:orientation of polyhedron}
  \end{minipage}
  \begin{minipage}[b]{0.45\linewidth}
    \centering
    \includegraphics[keepaspectratio, scale=1]{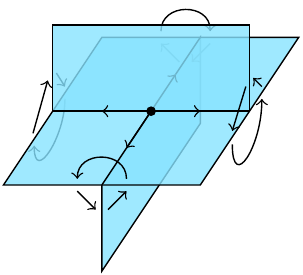}
    \caption{Orientation near vertex.}
    \label{fig:oriented standard polyhedron}
  \end{minipage}
\end{figure}

An \textit{\textbf{oriented branching}} on an oriented standard polyhedron $P$ is an orientation on connected components ($2$-cells) of $D(P)$ such that on each $1$-cell, the orientations induced from the $2$-cells attached to it are not compatible, i.e., there are locally three $2$-cells which are attached to a $1$-cell $e$ and one of the three induced orientations on $e$ is opposite to the other two (cf. \cite{BP}*{Corollary 3.1.7}). 
We can visualize a branching structure on $P$ as a smoothing of $P$  as shown in Figure \ref{fig:branching}, where the ``branching" starts from the region which induces inverse orientation on the $1$-cell relative to the others.
Here, each 1-cell has a canonical orientation as the two compatible orientations induced by the $2$-cells attached to it. 
Up to orientation preserving homeomorphism, there exist two possibilities for the branching structure near the 0-cell, the \textit{\textbf{type $+$}} and the \textit{\textbf{type $-$}},  which are shown in Figure \ref{fig:branched vertex}. 

\begin{figure}[H]
  \begin{minipage}[b]{0.3\linewidth}
    \centering
    \includegraphics[keepaspectratio, scale=1]{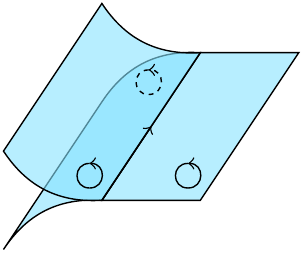}
    \caption{Branching.}
    \label{fig:branching}
  \end{minipage}
  \begin{minipage}[b]{0.6\linewidth}
    \centering
    \includegraphics[keepaspectratio, scale=1]{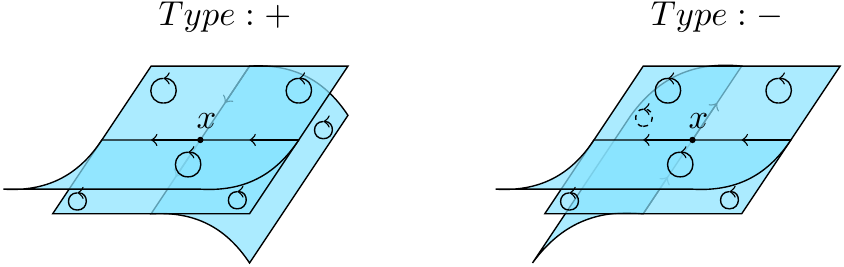}
    \caption{Local branching near  true vertex.}\label{fig:branched vertex}
  \end{minipage}
\end{figure}

\begin{rem}
If one prefers to work with ideal triangulations of oriented $3$-manifolds, one can take the dual polyhedron of oriented standard spine (Figure \ref{spine_tetrahedra}). This correspondence is one-to-one and every definition and argument in the following subsections can also be translated in terms of ideal triangulations.
For example, in terms of this dual perspective, a branching structure is a choice of edge orientation such that the edge orientation is not cyclic at every face of ideal triangulation.  
\end{rem}

\begin{figure}[H]
    \centering
    \includegraphics{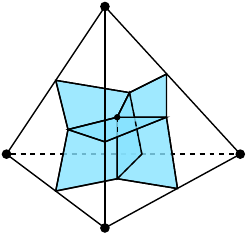}
    \caption{Ideal tetrahedron corresponding to true vertex.}
    \label{spine_tetrahedra}
\end{figure}

By abusing the terminology we call an oriented standard polyhedron endowed with an oriented branching a \textit{\textbf{branched polyhedron}}.
Let $P$ be a branched polyhedron and $M(P)$ the 3-manifold which is obtained from $P$ by thickening.
Then $P$ defines a unique non-vanishing vector field $v(P)$ on $M(P)$, perpendicular to $P$, as depicted in Figure \ref{fig:combin of branching}.
\begin{figure}[H]
 \begin{minipage}{0.48\columnwidth}
    \centering
    \includegraphics[scale=0.5]{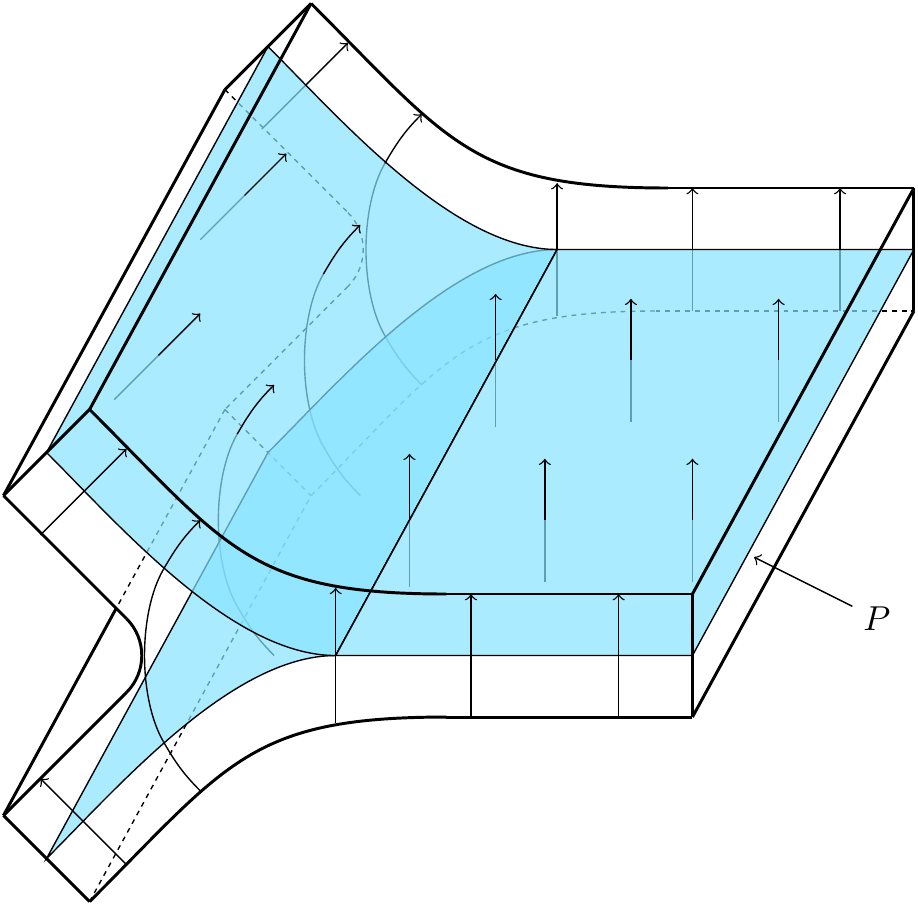}
    \caption{Vector field on $M(P)$.}
    \label{fig:combin of branching}
  \end{minipage}
    \begin{minipage}{0.5\columnwidth}
    \centering
    \includegraphics[scale=0.7]{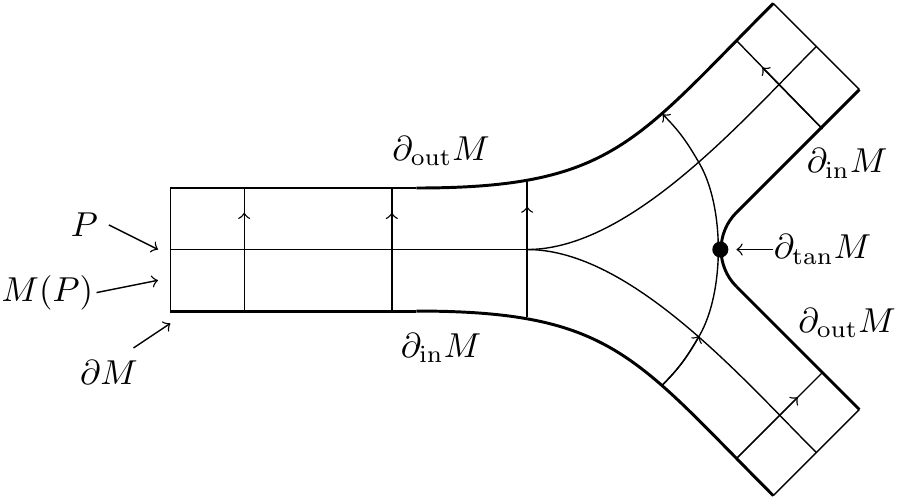}
    \caption{Stratification of boundary of $M(P)$.}
    \label{fig:black white decomposition}
  \end{minipage}
\end{figure}
\subsection{Closed combed 3-manifold}
\label{subsec:combing of 3-manifold}
Let $M$ be a
closed oriented 3-manifold.
A \textit{\textbf{combing}} $v$ of $M$ is a non-vanishing vector filed over $M$, which always exists because the Euler number of $M$ is $0$. A combed 3-manifold is a pair $(M,v)$ of a closed oriented 3-manifold $M$ and its combing $v$.
Two combed 3-manifolds $(M,v)$ and $(M^{\prime},v^{\prime})$ are \textit{\textbf{equivalent}} if there exists an orientation preserving diffeomorphism $h\co M\to M^{\prime}$ such that $h_*v$ is homotopic through combings to $v^{\prime}$. We denote by $\mathcal{M}_{\text{comb}}$ the set of equivalent classes of closed combed $3$-manifolds.

Note that for a branched polyhedron $P$, the vector field $v(P)$ of $M(P)$ induces a stratification  $\partial M=\partial_{\text{out}} M\cup \partial_{\text{tan}} M\cup\partial_{\text{in}} M$ on the boundary of $M(P)$, where 
\begin{align*}
    \partial_{\text{out}} M &= \{x\in\partial M\,|\,\text{the field $v$ at $x$ points outward}\},\\
    \partial_{\text{tan}} M &= \{x\in\partial M\,|\,\text{the field $v$ at $x$ is tangent to }\partial M\},\\
    \partial_{\text{in}} M &= \{x\in\partial M\,|\,\text{the field $v$ at $x$ points inward}\},
\end{align*}
as shown in Figure \ref{fig:black white decomposition}. A \textit{\textbf{closed branched polyhedron}} $P$ is a branched polyhedron such that $\partial M(P)=S^2_{\text{triv}}$, where $S^2_{\text{triv}}$ is the 2 dimensional sphere $S^2=\{(x,y,z)\in\mathbb{R}^3\,|\,x^2+y^2+z^2=1\}$ with the stratification given by $\partial_{\text{out}} S^2=\{z < 0\}$, $\partial_{\text{tan}} S^2=\{z = 0\}$, $\partial_{\text{in}} S^2=\{z > 0\}$.
We denote by $\mathcal{P}$ the set of homeomorphism classes of closed branched polyhedrons. 
For a closed branched polyhedron $P$, the associated vector field $v(P)$ on $M(P)$ extends to the combing $\widehat v(P)$ of closure $\widehat M(P)$ of $M(P)$ by capping the boundary $S^2_{\text{triv}}$ with a ball with trivial flow $(B^3,\frac{\partial}{\partial z})$.
Then we have a map
\begin{align*}
  \Phi_{\text{comb}}\co  \mathcal{P}\to \mathcal{M}_{\text{comb}},
\end{align*}
which is surjective \cite{BP}*{Proposition 5.2.3}.
Furthermore, the above surjection reduces to a bijection modulo branched version of Matveev-Piergallini moves on branched polyhedrons, which we will recall in Section \ref{closed normal o-graph} using graphical terminology.

\subsection{
Closed framed 3-manifold}
\label{subsec: framing} 
In what follows we sometimes use metrics of $3$-manifolds for convenience, while the results do not depend on them. Let $M$ be a closed oriented 3-manifold. 
A \textit{\textbf{framing}} $(v_1, v_2, v_3)$ of $M$ is a trivialization of the tangent bundle $TM$ such that $v_i\perp v_j$ for $i,j=1,2,3$ and $i\neq j$.
We assume that the orientation of $M$ induced from the framing matches with the existing one.
Two framed 3-manifolds are \textit{\textbf{equivalent}} if there exists a diffeomorphism $h\co M\to M^{\prime}$ such that $(h_*v_1, h_*v_2, h_*v_3)$ is homotopic through framings to $(v_1^{\prime},v_2^{\prime},v_3^{\prime})$. We denote by $\mathcal{M}_{\text{fram}}$  the
 set of equivalent classes of oriented closed framed $3$-manifolds.
Notice that we can obtain the third vector $v_3$ of a framing from $v_1$, $v_2$ and the orientation of $M$.
Thus, we represent a framed 3-manifold as $(M, v_1, v_2)$ specifying only the first two vectors $v_1, v_2$.

Let $(M,v_1)$ be a closed oriented combed 3-manifold.
Note that $v_1$ is a non-vanishing vector field, 
thus $TM$ canonically splits into the rank 1 vector bundle $\mathbb{R}v_1$ and the rank 2 vector bundle $(\mathbb{R}v_1)^{\perp}$ orthogonal to $\mathbb{R}v_1$.
The Euler class $\mathcal{E}\in H^2(M;\mathbb{Z})$ of $(\mathbb{R}v_1)^{\perp}$ is the obstruction to the existence of a framing of $M$ which extends $v_1$. 

Let $P$ a closed branched spine representing a closed combed 3-manifold $(\widehat M(P), \widehat v_1(P))$ as in the previous section. Observe that $\widehat v_1$ is perpendicular to $P$, thus $(\mathbb{R}\widehat v_1)^{\perp}$ is the ``tangent bundle" of $P$, where tangency makes sense because $P$ is branched. Then the Euler class $\mathcal{E}$ of $(\mathbb{R}\widehat v_1)^{\perp}$ is represented by the 2-cochain
\begin{align*}
    c_P = \sum_i (1-n_i/2)\hat{\Delta}_i \in C^2(P;\mathbb{Z})
\end{align*}
defined in \cite{BP}*{Proposition 7.1}, where $\hat{\Delta}_i\in C^2(P;\mathbb{Z})$ is the dual of a connected component ${\Delta}_i$ (2-cells) of $D(P)$,
and $n_i$ is a number of solid dots as in Figure \ref{fig:c_p def} on the boundary of ${\Delta}_i$.

\begin{figure}[H]
    \centering
    \includegraphics{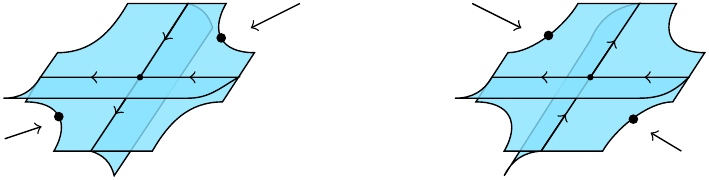}
    \caption{Solid dots on boundary of $\Delta_i$.}
    \label{fig:c_p def}
\end{figure}
\noindent Roughly speaking, $c_P$ is constructed as the obstruction to extending a certain vector field $v$ on $S(P)$ to $P$, where  $v|_{V(P)}$ is defined as in Figure \ref{sencond_combing_around_vertices} and  $v|_{S(P)\setminus V(P)}$ is defined keeping not tangent to $S(P)$. Note that the solid dots on ${\Delta}_i$ depicted in Figure \ref{fig:c_p def} correspond to the points where $v$ is tangent to $\partial {\Delta}_i$, which is used to count the rotation number $-n_i/2$ of $v$ on $\partial {\Delta}_i$ relative to the tangent vector of $\partial {\Delta}_i$, and thus the vector field $v$ needs singularities of total index $1-n_i/2$ at the interior of ${\Delta}_i$. 

\begin{figure}[H]
    \centering
    \includegraphics{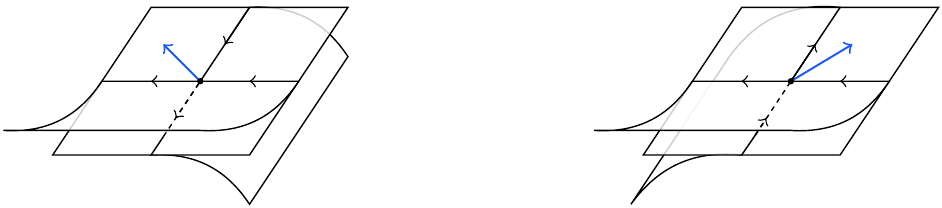}
    \caption{Local framing near vertex of type + (left) and of type - (right). }
    \label{sencond_combing_around_vertices}
\end{figure}

Let $(M, v_1, v_2)$ be a closed framed $3$-manifold and $P$ a closed branched spine representing $(M, v_1)$. We can represent the second vector $v_2$ by integer weights on $1$-cells of $P$, namely, by a 1-cochain $x\in C^1(P;\mathbb{Z})$ satisfying $\delta x= -c_P$ as follows (\cite{BS}*{Proposition 7.2.4}).\footnote{In \cite{BP}, the condition is $\delta x=c_P$ because the rotation number is counted $+1$ for clockwise rotation while we are counting oppositely. }
Using homotopy we assume that $v_2|_{V(P)}$ is as in Figure \ref{sencond_combing_around_vertices}.
Recall that each 1-cell $e$ has a canonical orientation coming from the branching of $P$.
From the starting point of $e$, let $x(e)$ be the rotation number of $v_2|_{e}$ relative to the tangent vector of $e$, where the counter clockwise $2\pi$ rotation is counted as $1$.
Then we get a 1-cochain $x\co C_1(P)\to\mathbb{Z}$, which satisfies $\delta x= -c_P$.

Conversely, given a 1-cochain $x\in C^1(P;\mathbb{Z})$ satisfying $\delta x= -c_P$, we can define the second vector $\widehat v_2(P,x)$ on $(\widehat M(P), \widehat v_1(P))$ as follows.
We first define the second vector $v_2(P,x)$ on $M(P)$ near the 0-cells as in Figure \ref{sencond_combing_around_vertices}, then we extend it over $S(P)$ so that the 1-cochain defined by rotation number matches with $x$.
Then, we extend it over the $D(P)$, where we can do this because of the boundary condition $\delta x= -c_P$ (\cite{BP}*{Proposition 7.2.1}), and since $\pi_2(S^1)=0$, the extension over the 2-cells are unique up to homotopy.
Since $\pi_2(S^1)=\pi_3(S^1)=0$, the second vector $v_2(P,x)$ defined on $M(P)$ extends uniquely to $\widehat{M}(P)$.  Thus we get the unique closed framed 3-manifold $(\widehat{M}(P), \widehat v_1(P), \widehat v_2(P,x))$ up to equivalence.  

Let $\mathcal{IP}$ be the set of pairs $(P, x)$ of homeomorphism classes of closed branched polyhedrons $P$ and 1-cochain $x\in C^1(P;\mathbb{Z})$ such that $\delta x=-c_P$. The above construction defines a surjective map 
\begin{align*}
    \Phi_{\text{fram}}\co\mathcal{IP}\to\mathcal{M}_{\text{fram}}.
\end{align*}
 
\subsection{Closed normal o-graph} \label{closed normal o-graph}
We recall from \cite{BP} closed normal o-graphs, which represent branched polyhedrons in a combinatorial manner.

For a branched polyhedron $P$, we replace each true vertex with a real crossing in $\mathbb{R}^2$; 
 we replace each vertex of type $+$ or $-$ to a crossing of type $+$ or $-$, respectively. Then we connect them according to how the true vertices are connected. Here, there could be virtual crossings depending on the combinatorics of true vertices,  and the result is an oriented virtual link diagram, which is called a \textit{\textbf{normal o-graph}}. Note that this construction is unique up to planer isotopy and the Reidemeister type moves (Figure \ref{fig:RM}).
\begin{figure}[H]
    \centering
    \includegraphics{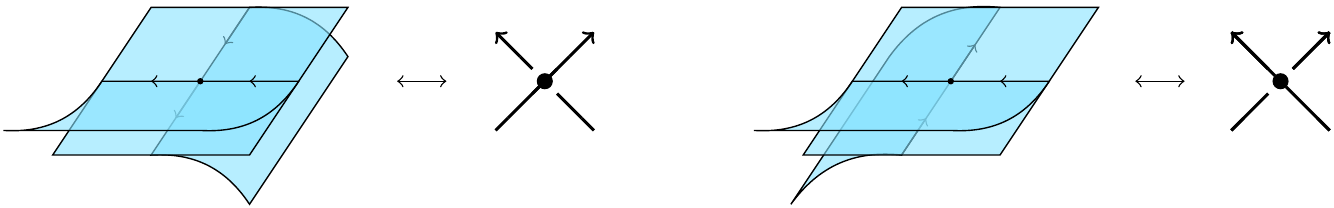}
    \caption{Correspondence between branched polyhedron and virtual link diagram near vertex of Type + (left) and type - (right).}
\end{figure}
\noindent Conversely, we can construct in a natural way from an oriented virtual link diagram a homeomorphism class of a branched polyhedron.
As a result, up to Reidemeister type moves, virtual link diagrams correspond one to one with homeomorphism classes of branched polyhedrons.

\begin{defi}[Closed normal o-graph \cite{BP}]
\label{def:closed normal o-graph}
A \textit{\textbf{closed normal o-graph}} is  
an oriented virtual link diagram  such that the associated branched spine is closed.
\end{defi}

\begin{rem}
The closed normal o-graph can also be defined combinatorially without referring the associated branched spine as in \cite{BP}*{Page 6}, where the conditions \textbf{C1}, \textbf{C2} and \textbf{C3}  for normal o-graphs  are equivalent to the closedness condition.
\end{rem}

Let $\mathcal{P}$ be the set of homeomorphism classes of closed branched spines, which will be identified to the set of closed normal o-graphs up to Reidemeister type moves.
We define the \textit{\textbf{branched 0-2 move}} and the \textit{\textbf{branched MP-move}} as in Figure \ref{fig:02} and Figure \ref{fig:MP}, respectively. 
These moves preserve the  closedness condition, and define an equivalence relation $\sim_{\text{comb}} $ on $\mathcal{P}$.

\begin{figure}[H]
    \centering
    \includegraphics[scale=0.8]{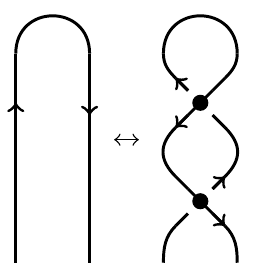}
    \caption{Branched 0-2 move.}
    \label{fig:02}
\end{figure}

\begin{figure}[H]
    \centering
    \includegraphics[scale=0.7]{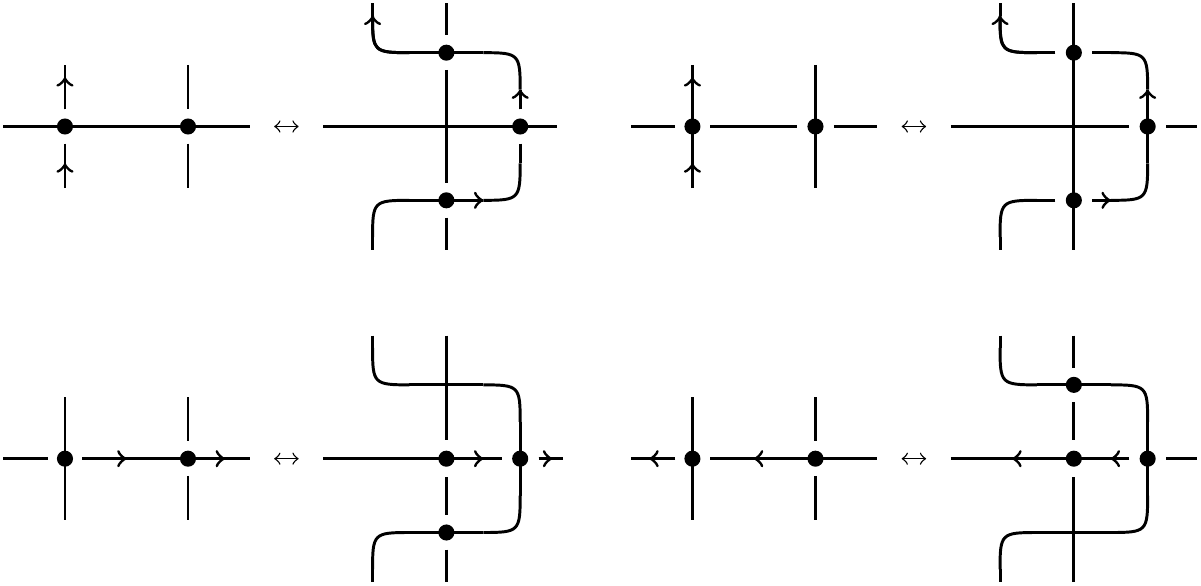}
    \caption{Branched MP-move. Orientation of each non-oriented edge is arbitrary if it matches before and after the move.}
    \label{fig:MP}
\end{figure}

\begin{prop}\cite{BP}*{Theorem 1.4.1}\label{prop:combed mfd and branched spine}
The surjective map $\Phi_{\text{comb}}\co \mathcal{P}\to \mathcal{M}_{\text{comb}}$ given in Section \ref{subsec:combing of 3-manifold}
induces a  well-defined bijection
\begin{align*}
\Phi_{\text{comb}}\co \bigslant{\mathcal{P}}{\sim_{\text{comb}}}\to \mathcal{M}_{\text{comb}}.
\end{align*}
\end{prop}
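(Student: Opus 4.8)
The plan is to establish two things: that $\Phi_{\text{comb}}$ descends to the quotient $\bigslant{\mathcal{P}}{\sim_{\text{comb}}}$ (well-definedness) and that the descended map is injective, surjectivity being already in hand from \cite{BP}*{Proposition 5.2.3}.

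\emph{Well-definedness.} Here I would check, move by move, that the combed manifold $(\widehat M(P),\widehat v(P))$ is unchanged. Each branched move is local, supported in a ball, so I thicken both sides and produce an orientation-preserving homeomorphism of the resulting $3$-manifolds that is the identity outside the affected region. For the branched $0$-$2$ move the two spines differ by a cancelling bubble, and thickening gives homeomorphic manifolds whose normal fields agree outside a ball and are homotopic inside it. For the branched MP-move the thickened manifold is literally unchanged, being an internal re-triangulation dual to the $2$--$3$ Pachner move, so I only need the two normal fields $v(P)$ to be homotopic; this follows by inspecting the local field on the three versus the two tetrahedra and using that they fill the same ball with matching boundary behaviour. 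Because the branched moves were designed to respect the induced orientations on the $2$-cells, these local homotopies glue to a global homotopy of combings, so $\Phi_{\text{comb}}(P)=\Phi_{\text{comb}}(P')$ whenever $P\sim_{\text{comb}}P'$.

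\emph{Injectivity, reduction to the manifold level.} Suppose $P,P'$ are closed branched polyhedra with $\Phi_{\text{comb}}(P)=\Phi_{\text{comb}}(P')$, witnessed by an orientation-preserving diffeomorphism $h\co\widehat M(P)\to\widehat M(P')$ carrying $\widehat v(P)$ to a combing homotopic to $\widehat v(P')$. Forgetting the branching and combing, $P$ and $P'$ are standard spines of the homeomorphic bounded manifolds $M(P)$ and $M(P')$ (the complements of the capping balls). By the Matveev--Piergallini theorem \cite{Mat}, after possibly inserting or removing bubbles by (unbranched) $0$-$2$ moves to equalize the true-vertex counts, $P$ and $P'$ are connected by a finite sequence of unbranched MP-moves and $0$-$2$ moves.

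\emph{Lifting the moves and absorbing the combing.} The task is to upgrade this unbranched sequence to a sequence of branched moves. I would argue that each unbranched MP- or $0$-$2$ move, performed on a spine already carrying a branching, is realized by branched moves at the cost of possibly altering the output branching: a finite case analysis over the admissible local branchings near the affected cells shows each case is one of the listed branched moves — the branched MP-move is specified in all its orientation variants (the same multiplicity that later yields the sixteen integral MP-moves) — or else a short composite obtained by inserting and deleting an auxiliary bubble via branched $0$-$2$ moves to correct a non-extendable branching. This connects $P$ to a branched spine $P''$ whose underlying standard spine is that of $P'$. It then remains to reconcile the branchings of $P''$ and $P'$, equivalently the combings $\widehat v(P'')$ and $\widehat v(P')$, which are homotopic by hypothesis. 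Homotopies of combings are generated by local finger homotopies supported in balls, and I would show each elementary one is realized by a branched MP- or $0$-$2$ move; the obstruction-theoretic invariants classifying combings up to homotopy (a primary difference in $H^2(\widehat M;\mathbb Z)\cong H_1(\widehat M;\mathbb Z)$ together with a secondary degree in $\pi_3(S^2)\cong\mathbb Z$) determine exactly which and how many moves to apply. Composing everything gives $P\sim_{\text{comb}}P'$.

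\emph{Main obstacle.} The crux is the third step: proving that the branched $0$-$2$ and MP-moves are \emph{sufficient}, i.e.\ that no further independent relation is needed to run the Matveev--Piergallini calculus in the presence of a branching and simultaneously to realize every homotopy of the normal combing. This demands a careful simultaneous bookkeeping of the branching data and the rotation/obstruction data of the vector field under each local model, which is precisely the content of \cite{BP}*{Theorem 1.4.1}.
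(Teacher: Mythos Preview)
The paper does not prove this proposition at all: it is stated with the attribution \cite{BP}*{Theorem 1.4.1} and no argument is given, so there is nothing in the paper to compare your proposal against. Your outline is a reasonable high-level sketch of how the Benedetti--Petronio proof proceeds (well-definedness by local inspection, injectivity by lifting the Matveev--Piergallini calculus to the branched setting and then realizing homotopies of combings by branched moves), and you yourself correctly identify that the substantive work --- showing the branched $0$-$2$ and MP-moves suffice --- is exactly what is carried out in \cite{BP} and not reproduced here.

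One caution on your sketch: your description of the injectivity step slightly underplays the difficulty. The actual argument in \cite{BP} does not simply ``lift'' an unbranched Matveev--Piergallini sequence move by move; an arbitrary unbranched MP-move need not admit a compatible branching on both sides, and the repair is not just inserting a bubble but rather a more elaborate detour through several branched moves. Likewise, realizing an arbitrary homotopy of combings by branched moves is not just a matter of reading off obstruction classes --- \cite{BP} devotes a chapter to the ``Pontrjagin moves'' (and their interaction with the branched MP-moves) needed to change the combing within a fixed spine. So while your strategy is correct in outline, the phrase ``a finite case analysis'' hides the bulk of the work, as you acknowledge in your final paragraph.
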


\subsection{Framed integral normal o-graph and proof of Proposition \ref{prop: framed 3-mfd BP-diagram}}
\label{subsec:BP diagram}
Note that the correspondence of normal o-graphs and branched polyhedrons implies the correspondence of integral normal o-graphs (Definition \ref{def:bp}) $\Gamma$ and pairs $(P(\Gamma),x(\Gamma))$ of  a branched polyhedron $P(\Gamma)$  and  its 1-cochain $x(\Gamma)\in C^1(P(\Gamma);\mathbb{Z})$. 
Recall from Section \ref{subsec: framing} the canonical $2$-cochain $c_P$ of a closed branched spine $P$.
\begin{defi}[Framed integral normal o-graph]
\label{def:closed-bp}
A \textit{\textbf{framed integral normal o-graph}} is an integral normal o-graph $\Gamma$  such that the associated branched spine $P(\Gamma)$ is closed and $\delta x(\Gamma)=-c_{P(\Gamma)}$.
\end{defi}

Let  $\mathcal{FIG}$ be the set of framed integral normal o-graphs up to the Reidemeister type moves. The  correspondence $\Gamma \mapsto (P(\Gamma),x(\Gamma))$ gives an identification of $\mathcal{FIG}$ and $\mathcal{IP}$.
 Note that the integral 0-2 move, the integral MP-move and the H-move preserve the conditions of framed integral normal o-graphs, and thus the equivalence relation $\sim$ is well-defined on $\mathcal{FIG}$, thus on $\mathcal{IP}$.
 Let $\mathcal{M}_{\text{fram}}^{0}$ be the subset of $\mathcal{M}_{\text{fram}}$ consisting of closed framed 3-manifolds with the first Betti number $b_1=0$, and $\mathcal{IP}^0$  the subset of $\mathcal{IP}$ consisting of pairs $(P,x)\in \mathcal{IP}$ such that $\widehat M(P)\in \mathcal{M}_{\text{fram}}^{0}$.
The following proposition can be thought as integer lift of  \cite{BP}*{Section 7.3} restricting manifolds to $b_1=0$. 

\begin{prop}
\label{framed branched spine is one to one}
The restriction of the surjective map  $\Phi_{\text{fram}}\co \mathcal{IP}\to\mathcal{M}_{\text{fram}}$ to $\mathcal{IP}^0$ defines a well-defined bijection
 \begin{align*}
    \Phi^0_{\text{fram}}\co\mathcal{IP}^0/\sim \ \to\mathcal{M}_{\text{fram}}^{0}.
\end{align*}
\end{prop}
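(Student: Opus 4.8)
\textbf{Proof strategy for Proposition \ref{framed branched spine is one to one}.}
The plan is to upgrade the $\mathbb{Z}/2\mathbb{Z}$-coefficient argument of Benedetti--Petronio \cite{BP}*{Section 7.3} to integer coefficients, the only delicate point being surjectivity and injectivity modulo the moves $\sim$. Surjectivity of $\Phi_{\text{fram}}$ is already established in Section \ref{subsec: framing}; restricting to the $b_1=0$ locus, surjectivity of $\Phi^0_{\text{fram}}$ is immediate since $\widehat M(P)$ depends only on $P$, so $\Phi_{\text{fram}}^{-1}(\mathcal{M}^0_{\text{fram}})=\mathcal{IP}^0$ by definition. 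It remains to show that $\Phi^0_{\text{fram}}$ is well-defined on $\mathcal{IP}^0/\sim$ and injective. Well-definedness amounts to checking that the integral $0$-$2$ move, the integral MP-move and the H-move do not change the equivalence class of the resulting closed framed $3$-manifold: the underlying branched-polyhedron moves (branched $0$-$2$ and branched MP) preserve $\widehat M(P)$ and $\widehat v_1(P)$ by Proposition \ref{prop:combed mfd and branched spine}, and one checks directly from the local pictures in Figures \ref{fig:0-2 move}, \ref{fig:MP-move}, \ref{fig:H move} that the prescribed vector field $\widehat v_2(P,x)$ near the modified cells extends across the move, using that the rotation-number bookkeeping is consistent (this is where the ``add the weights in $\mathbb{Z}$'' convention and the shifts by $\pm 1$ in the Type-B MP-moves are forced). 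The H-move in particular is the purely framing-theoretic move: it realizes the ambiguity in the choice of extension of $v_2$ over $S(P)$, i.e. a change of the $1$-cochain $x$ by an integral coboundary $\delta(\hat e)$ supported on a single $1$-cell $e$.

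The heart of the argument is injectivity. Suppose $(P,x)$ and $(P',x')$ in $\mathcal{IP}^0$ have equivalent closed framed $3$-manifolds. First, forgetting $v_2$, the combed manifolds $(\widehat M(P),\widehat v_1(P))$ and $(\widehat M(P'),\widehat v_1(P'))$ are equivalent, so by Proposition \ref{prop:combed mfd and branched spine} the polyhedra $P$ and $P'$ are related by a sequence of branched $0$-$2$ and branched MP-moves. Lifting this sequence to $\mathcal{IP}$ (carrying the cochain along, using the integral $0$-$2$ and integral MP-moves, with weight changes dictated by the requirement $\delta x=-c_P$), we reduce to the case $P=P'$, where now $x$ and $x'$ are two $1$-cochains on the \emph{same} closed branched spine $P$ with $\delta x=\delta x'=-c_P$, and the associated framings $\widehat v_2(P,x)$, $\widehat v_2(P,x')$ are homotopic through framings (after an orientation-preserving self-diffeomorphism of $\widehat M(P)$, which we may also need to track combinatorially, or absorb via further moves). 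Then $x-x'$ is a $1$-cocycle, i.e. represents a class in $H^1(P;\mathbb{Z})\cong H^1(\widehat M(P);\mathbb{Z})$. The obstruction-theoretic content is that two framings extending the \emph{same} $v_1$ differ by an element of $H^1(M;\pi_1(SO(2)))=H^1(M;\mathbb{Z})$ together with a possible $H^3$-difference, and the latter vanishes here; so $x$ and $x'$ give homotopic framings if and only if $[x-x']=0$ in $H^1(\widehat M(P);\mathbb{Z})$, i.e. $x-x'=\delta y$ for some $y\in C^1(P;\mathbb{Z})$... wait, $x-x'$ is already a cocycle, so I mean $x-x'=\delta(\text{something in }C^0)$, an integral coboundary. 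The point where $b_1=0$ enters is precisely here: $H^1(\widehat M(P);\mathbb{Z})=0$, so \emph{every} cocycle $x-x'$ is automatically a coboundary, and hence $x$ and $x'$ are connected by a sequence of H-moves (each H-move changing $x$ by an elementary coboundary $\delta \hat{v}$ for a $0$-cell $v$ of $P$, and such elementary coboundaries generate $\delta C^0(P;\mathbb{Z})$). This gives $(P,x)\sim(P',x')$, proving injectivity.

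\textbf{Main obstacle.} The principal difficulty is the injectivity step, and within it two sub-points require care. First, one must verify that the H-move exactly generates changes of $x$ by integral coboundaries $\delta C^0(P;\mathbb{Z})$ --- not more, not less --- so that the combinatorial equivalence $\sim$ matches the homotopy-of-framings equivalence; this is a local check on Figure \ref{fig:H move} against the definition of $x(e)$ as a rotation number, and it is here that the passage from $\mathbb{Z}/2\mathbb{Z}$ to $\mathbb{Z}$ must be genuinely used (in the involutory/Benedetti--Petronio world only the mod-$2$ reduction is seen). Second, one must make sure that when lifting the sequence of branched moves realizing $P\simeq P'$ to the integral setting, the cochains can always be adjusted consistently --- i.e. the integral $0$-$2$ and MP-moves are ``surjective enough'' on cochains satisfying $\delta x=-c_P$; this follows because those moves were designed (via the $\pm1$ weight shifts) to commute with the constraint $\delta x=-c_P$, so any lift of the target cochain can be reached, possibly after extra H-moves to correct a residual coboundary. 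The $b_1=0$ hypothesis is exactly what kills the potential $H^1$-obstruction to this last correction, which is why the statement is restricted to that case.
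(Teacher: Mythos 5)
Your proposal is correct and follows the paper's overall strategy (surjectivity for free, well-definedness move by move, then injectivity by passing through Proposition \ref{prop:combed mfd and branched spine}, lifting the branched $0$-$2$ and MP-moves to their integral versions, and correcting the residual cochain by H-moves), but the last step of injectivity takes a slightly different, more elementary route than the paper. The paper concludes $[x_2-x_2']=0$ in $H^1(P_2;\mathbb{Z})$ in two stages: Lemma \ref{BPlemma} (Benedetti--Petronio) gives vanishing of the mod~$2$ class because the two framings coincide, and Lemma \ref{lem:0 mod 2 can be lifted} upgrades this to integral vanishing using $b_1=0$; you instead note that $H^1(P;\mathbb{Z})\cong H^1(\widehat M(P);\mathbb{Z})$ is free of rank $b_1$, hence zero, so the cocycle $x-x'$ is automatically an integral coboundary --- a legitimate shortcut that bypasses the mod~$2$ criterion entirely in the injectivity step. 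One caveat: your stated obstruction-theoretic criterion, that the framings determined by $x$ and $x'$ are homotopic \emph{if and only if} $[x-x']=0$ in $H^1(M;\mathbb{Z})$ (with the $H^3$-difference ``vanishing here''), is not correct as an equivalence. Homotopies of framings move all three vectors, i.e.\ they live in $SO(3)$ with $\pi_1(SO(3))=\mathbb{Z}_2$, so the right first obstruction is only the mod~$2$ reduction of $[x-x']$; this is precisely the content of Lemma \ref{BPlemma}, and in general a nonzero even integral class still yields homotopic framings. Your argument is unharmed because you use only the true ``if'' direction (for well-definedness under the H-move), while your injectivity needs no framing criterion at all once $H^1(P;\mathbb{Z})=0$. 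Two minor further points: the H-move changes $x$ by $\delta\hat v$ for a true vertex ($0$-cell) $v$, so the coboundary is supported on the edges incident to $v$, not ``on a single $1$-cell''; and the lifting of the branched move sequence to integral moves requires no extra ``surjectivity on cochains'' discussion, since each integral move prescribes the new weights and preserves $\delta x=-c_P$ by construction, with any residual discrepancy handled exactly as you and the paper both do, by H-moves.
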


To prove Proposition \ref{framed branched spine is one to one}, we use the following lemmas.
\begin{lem}[{\cite{BP}*{Proposition 7.2.2}}]\label{BPlemma}
Let $P$ be a closed branched polyhedron.
For $x,x'\in C^1(P;\mathbb{Z})$ such that $\delta x=\delta x'=-c_P$, the two framing given by $x,x'$ coincide if and only if $[x_2-x_2^{\prime}]_2\in H^1(P; \mathbb{Z}_2)$ vanishes.
\end{lem}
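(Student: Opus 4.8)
The plan is to recast the statement as a homotopy classification of trivializations of $TM$ and reduce it to the single topological fact $\pi_1(SO(3))=\mathbb{Z}_2$. Write $M=\widehat M(P)$ and $v_1=\widehat v_1(P)$, and interpret the coincidence of the two framings as homotopy through framings on $(M,v_1)$ (the content of \cite{BP}*{Proposition 7.2.2}, taken with the identity diffeomorphism). Since $\delta x=\delta x'=-c_P$, the difference $x-x'$ is a $1$-cocycle. Because $P$ is a spine of $M(P)$ and $M$ is obtained by capping the spherical boundary $S^2_{\text{triv}}$ with a ball, retraction together with the fact that capping an $S^2$ affects only $H_2,H_3$ gives $H^1(P;\mathbb{Z})\cong H^1(M;\mathbb{Z})$ compatibly with mod-$2$ reduction; hence $[x-x']_2$ may be read in either $H^1(P;\mathbb{Z}_2)$ or $H^1(M;\mathbb{Z}_2)$.

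First I would identify the difference of the two framings. Both $\widehat v_2(P,x)$ and $\widehat v_2(P,x')$ are unit sections of the oriented rank-$2$ bundle $(\mathbb{R}v_1)^{\perp}$, i.e. sections of the associated principal $SO(2)$-bundle. By construction of the rotation-number cochain in Section \ref{subsec: framing}, the cocycle $x-x'$ represents exactly the difference class of these two sections in $[M,SO(2)]=H^1(M;\mathbb{Z})$. Consequently the two framings differ by the pointwise gauge transformation $u=\iota\circ g\colon M\to SO(3)$, where $g\colon M\to SO(2)$ realizes $[x-x']$ and $\iota\colon SO(2)\hookrightarrow SO(3)$ is the inclusion of the stabilizer of $v_1$; the transformation lands in $SO(2)$ precisely because both framings extend the same $v_1$. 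Since trivializations of $TM$ form a torsor over $\mathrm{Map}(M,SO(3))$ under pointwise multiplication, the two framings are homotopic through framings if and only if $u$ is null-homotopic.

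It then remains to prove the key topological fact: for $g\colon M\to SO(2)$, the composite $\iota\circ g$ is null-homotopic if and only if $[g]_2\in H^1(M;\mathbb{Z}_2)$ vanishes. For the ``only if'' direction, the induced map $(\iota\circ g)_*\colon\pi_1(M)\to\pi_1(SO(3))=\mathbb{Z}_2$ is a homotopy invariant equal to the mod-$2$ reduction of $g_*\colon\pi_1(M)\to\pi_1(SO(2))=\mathbb{Z}$, which under $H^1(M;\mathbb{Z}_2)=\mathrm{Hom}(H_1(M),\mathbb{Z}_2)$ is precisely $[g]_2$; if $\iota\circ g$ is null-homotopic this class vanishes. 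For the ``if'' direction, $H^1(M;\mathbb{Z})$ is torsion-free, so $[g]_2=0$ forces $[g]=2\beta$; thus $g$ is homotopic to the pointwise square $h^2$ with $[h]=\beta$, and $\iota\circ g\simeq q\circ h$, where $q\colon SO(2)\to SO(3)$, $q(z)=\iota(z^2)$, restricts on a generator of $\pi_1(SO(2))$ to the $4\pi$-rotation loop. Because $\pi_1(SO(3))=\mathbb{Z}_2$, this $4\pi$-loop is null-homotopic (the belt/plate trick), so $q$ is null-homotopic and pulling its null-homotopy back along $h$ kills $\iota\circ g$.

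Combining the three steps shows that the framings given by $x$ and $x'$ coincide iff $u=\iota\circ g$ is null-homotopic iff $[x-x']_2=0$ in $H^1(P;\mathbb{Z}_2)$, as claimed. The step I expect to be the main obstacle is making the identification of Step two rigorous, namely matching the combinatorial rotation-number cochain $x-x'$ with the difference class of the two bundle sections and checking that capping the boundary sphere leaves the relevant $H^1$ untouched; by contrast the genuinely substantive input, the triviality of the $4\pi$-rotation in $\pi_1(SO(3))$, is classical and enters only through Step three.
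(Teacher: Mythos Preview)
The paper does not prove this lemma; it is quoted verbatim from \cite{BP}*{Proposition 7.2.2} and used as a black box in the proof of Proposition~\ref{framed branched spine is one to one}. So there is no proof in the paper to compare against, and your argument supplies a correct self-contained one.

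Your approach is sound. The identification of the two sections $\widehat v_2(P,x)$ and $\widehat v_2(P,x')$ of $(\mathbb{R}v_1)^{\perp}$ as differing by a map $g\colon M\to SO(2)$ with $[g]=[x-x']\in H^1(M;\mathbb{Z})$ is exactly how the rotation-number cochain was set up in Section~\ref{subsec: framing}, and the reduction of ``homotopic through framings'' to ``$\iota\circ g\colon M\to SO(3)$ null-homotopic'' is the standard torsor description. The factorization trick in your ``if'' direction---writing $g\simeq h^2$ because $H^1(M;\mathbb{Z})$ is torsion-free, and then $\iota\circ g\simeq q\circ h$ with $q(z)=\iota(z^2)$ null-homotopic as a map $S^1\to SO(3)$---is a clean way to finish, since it sidesteps the secondary obstruction in $H^3(M;\pi_3(SO(3)))=H^3(M;\mathbb{Z})$ that a direct obstruction-theory argument would otherwise have to confront. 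The cohomology identification $H^1(P)\cong H^1(\widehat M(P))$ is routine for the reason you state (retraction to the spine, then capping an $S^2$ boundary affects only degrees $\geq 2$).
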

\begin{lem}
\label{lem:0 mod 2 can be lifted}
Let $M$ be a closed oriented 3-manifold.
The following two conditions are equivalent.
\begin{enumerate}
    \item The kernel of $\text{mod}\,2\co H^1(M;\mathbb{Z}) \to H^1(M;\mathbb{Z}_2)$ is $0$.
    \item $b_1(M)=0$, where $b_1(M)$ is the first Betti number of $M$.
\end{enumerate}
\end{lem}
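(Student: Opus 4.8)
The plan is to reduce the statement to an elementary fact about the free abelian group $H^1(M;\mathbb{Z})$; the fact that $M$ is a closed oriented $3$-manifold will play no role beyond guaranteeing finite generation of homology, and indeed the argument works for any space with finitely generated $H_1$. First I would recall that $H^1(M;\mathbb{Z})$ is torsion-free: by the universal coefficient theorem $H^1(M;\mathbb{Z})\cong\mathrm{Hom}(H_1(M;\mathbb{Z}),\mathbb{Z})$, the $\mathrm{Ext}$ summand vanishing because $H_0(M;\mathbb{Z})$ is free, and a $\mathrm{Hom}$-group into $\mathbb{Z}$ is always torsion-free. Hence $H^1(M;\mathbb{Z})$ is free abelian of rank $b_1(M)$.

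Next I would identify the kernel of the mod $2$ reduction. The coefficient sequence $0\to\mathbb{Z}\xrightarrow{\times 2}\mathbb{Z}\to\mathbb{Z}_2\to 0$ induces a Bockstein long exact sequence whose relevant segment $H^1(M;\mathbb{Z})\xrightarrow{\times 2}H^1(M;\mathbb{Z})\xrightarrow{\mathrm{mod}\,2}H^1(M;\mathbb{Z}_2)$ is exact at the middle term; therefore $\ker(\mathrm{mod}\,2)=2\,H^1(M;\mathbb{Z})$. Equivalently, under the $\mathrm{Hom}$ identification a homomorphism $f\co H_1(M;\mathbb{Z})\to\mathbb{Z}$ reduces to zero modulo $2$ exactly when $f(H_1(M;\mathbb{Z}))\subseteq 2\mathbb{Z}$, i.e.\ when $f=2g$ for some $g$, so again the kernel is $2\,\mathrm{Hom}(H_1(M;\mathbb{Z}),\mathbb{Z})$.

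Finally, combining the two steps, $\ker(\mathrm{mod}\,2)=2\,H^1(M;\mathbb{Z})\cong 2\,\mathbb{Z}^{b_1(M)}$, which is the trivial group precisely when $b_1(M)=0$: if $b_1(M)\ge 1$ then twice a basis element is a nonzero element of the kernel. This yields both implications $(1)\Leftrightarrow(2)$ simultaneously. I do not expect a real obstacle; the only point deserving a line of care is the naturality of the universal coefficient isomorphism with respect to the coefficient homomorphism $\mathbb{Z}\to\mathbb{Z}_2$, so that the map called ``$\mathrm{mod}\,2$'' in the statement is genuinely post-composition by $\mathbb{Z}\to\mathbb{Z}_2$ — this is standard and can be invoked without further ado.
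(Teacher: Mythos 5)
Your proposal is correct and follows essentially the same route as the paper: identify $H^1(M;\mathbb{Z})\cong\mathrm{Hom}(H_1(M),\mathbb{Z})$ as free abelian of rank $b_1(M)$ via universal coefficients, and use exactness of the Bockstein sequence $H^1(M;\mathbb{Z})\xrightarrow{\times 2}H^1(M;\mathbb{Z})\xrightarrow{\mathrm{mod}\,2}H^1(M;\mathbb{Z}_2)$ to pin down the kernel. The only cosmetic difference is that you describe the kernel directly as $2H^1(M;\mathbb{Z})\cong 2\mathbb{Z}^{b_1}$, whereas the paper phrases the same conclusion by noting that vanishing of the kernel forces $H^1$ to be $2$-torsion, hence zero by freeness.
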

\begin{proof} Note that $H^1(M;\mathbb{Z})=\text{Hom}(H_1(M),\mathbb{Z})$ and thus $H^1(M;\mathbb{Z})$ is a free abelian group, which implies (2) $\Rightarrow$ (1). 
We prove (1) $\Rightarrow$ (2). 
Consider the following long exact sequence of cohomology 
\begin{align*}
    \cdots\xrightarrow[]{}H^1(M;\mathbb{Z})\xrightarrow[]{\times	2}H^1(M;\mathbb{Z})\xrightarrow[]{\text{mod}\,2}H^1(M;\mathbb{Z}_2)\xrightarrow[]{}\cdots,
\end{align*}
which is induced by the short exact sequence  $0\xrightarrow[]{}\mathbb{Z}\xrightarrow[]{}\mathbb{Z}\xrightarrow[]{}\mathbb{Z}_2\xrightarrow[]{}0$.
Then the condition (1) is equivalent to $\text{Tor}(\mathbb{Z}_2,H^1(M,\mathbb{Z}))=H^1(M,\mathbb{Z})$ by $\text{Im}(\times 2)=\text{Ker}(\text{mod}\,2)=0$.
Since $H^1(M;\mathbb{Z})$ is a free abelian group, we have $\text{Tor}(\mathbb{Z}_2,H^1(M;\mathbb{Z}))=0$, which implies $H^1(M;\mathbb{Z})=0$ and thus $b_1(M)=0$. 
\end{proof} 

\begin{proof}[Proof of Proposition \ref{framed branched spine is one to one}]
We need to prove the well-definedness and injectivity of the map $\Phi_{\text{fram}}$. The proof will be performed in almost the same way as in \cite{BP}*{Section 7.3}, where the authors use  $\mathbb{Z}/2\mathbb{Z}$ weights instead of integer weights.
The proof of well-definedness under the integral 0-2 move and the integral MP-move can be performed by comparing the second vector fields associated with the framing before and after the branched 0-2 and the branched MP-move as in Figure 7.7 and Figure 7.8 in \cite{BP}*{Section 7.3}. Note that the H-move adds a coboundary of $0$-chains, and thus the well-definedness is ensured by Lemma \ref{BPlemma}.
For the proof of injectivity, let $(P_1,x_1), (P_2,x_2)\in \mathcal{IP}^0$ be branched spines representing equivalent framed 3-manifolds $(M,v_1, v_2)$ and $(M^{\prime},v_1^{\prime}, v_2^{\prime})$, i.e., $\Phi_{\text{fram}}(P_1,x_1) = \Phi_{\text{fram}}(P_2,x_2)$.
Since $(M,v_1)$ and $(M^{\prime},v_1^{\prime})$ are equivalent closed oriented combed 3-manifolds, by Proposition \ref{prop:combed mfd and branched spine}, we can transform the spine $P_1$ to $P_2$ by the branched 0-2 move and the branched MP-move.
We perform the above transformation on $(P_1, x_1)$ using the integral 0-2 move and the integral MP-move instead of the branched 0-2 move and the branched MP-move, respectively, and denote $(P_2, x^{\prime}_2)$ the result.
Here, $x_2^{\prime}$ is not necessarily equal to $x_2$, but they can be transformed into each other by the H-move as follows.
Recall that $\delta x_2 = \delta x_2^{\prime}= -c_{P_2}$, thus we can define a cohomology class $[x_2-x_2^{\prime}]\in H^1(P_2;\mathbb{Z})$, which vanishes in $H^1(P_2;\mathbb{Z}_2)$ by Lemma \ref{BPlemma} since the two framings given by $x_2, x_2^{\prime}$ coincide.
Since $b_1(M)=0$, $[x_2-x_2^{\prime}] = 0$ in $H^1(P_2;\mathbb{Z})$ by Lemma \ref{lem:0 mod 2 can be lifted}, thus $x_2^{\prime}$ can be transformed to $x_2$ by adding the coboundary of 0-chains, which is performed by a sequence of the H-moves.
\end{proof}
\noindent
\textit{Proof of Proposition \ref{prop: framed 3-mfd BP-diagram}.}
The assertion follows from Proposition \ref{framed branched spine is one to one} and the identification of $\mathcal{IP}^0$ and  $\mathcal{FIG}^0$.
$\square$

\begin{rem}
    If we consider the mod 2 reduction of weights of a framed integral normal o-graph, the result is a framed normal o-graph defined in \cite{BP}*{Page 7}.
    Note that we do not need the integer lift of the weights to represent a closed framed 3-manifold because of the Lemma \ref{BPlemma}. However, if we consider the modulo $2$ reduction of the weights, it demands $S^4=\text{id}_H$ when we define the invariant using a Hopf monoid (Section \ref{sec:Invariant}). Since we would like to use  arbitrary non-involutory Hopf monoids such as small quantum Borel subalgebras, we prefer to use integer weights even though we add the condition $b_1=0$.
\end{rem}

\subsection{Examples of framed integral normal o-graph}
\label{Example of closed BP diagram}

We give some examples of framed integral normal o-graphs which represent lens spaces or branched covers of the trefoil knot.

\begin{ex}
\label{lens}
For $p\geq 1$, the following closed normal o-graph gives an example of a closed branched spine of the combed lens space $L(p,1)$:
\begin{figure}[H]
    \centering
    \includegraphics{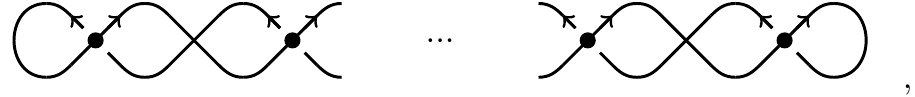}
\end{figure}
\noindent where the number of the vertices is $p$.
Especially for each $p=1,2$, the Euler class of this combing vanishes, thus we can extend this combing to a framing.
Since the first cohomologies of both manifolds are $0$, these extensions are actually canonical, and the framed integral normal o-graphs are given below:
\begin{figure}[H]
    \centering
    \includegraphics{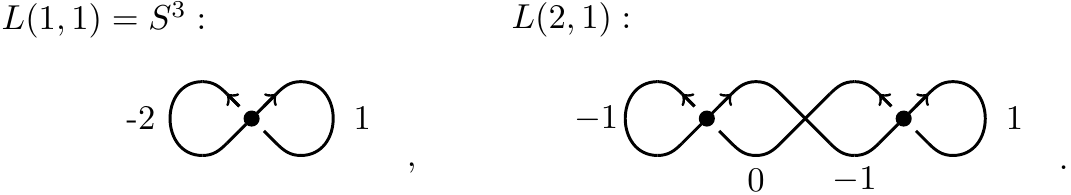}
\end{figure}
\noindent
In case of $p\geq 3$, the Euler class of the above combing of $L(p,1)$ does not vanish, thus we cannot extend the combing to framings. 
\end{ex}

\begin{ex}
For $p\geq 1$, let $M_p(K)$ be a $p$-fold cyclic branched cover of trefoil knot $K$ in $S^3$. 
 For $M_1(K)=S^3$, $M_2(K)=L(3,1)$, and $M_3(K)=S^3/Q_8$, we can construct the following framed integral normal o-graphs:
\begin{figure}[H]
    \centering
    \includegraphics{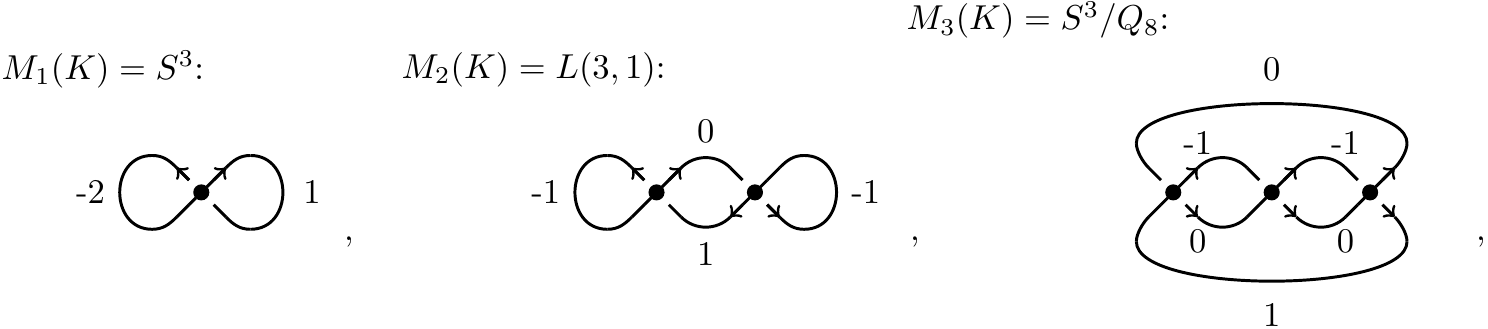}
\end{figure}
\end{ex}
\noindent which are obtained by reversing the constructions of DS-diagrams of $M_p(K)$ in  \cites{Hakone1}. \footnote{Let $P$ be a standard spine of $M$ with $S^2$ boundary. Since $M$ collapses to $P$, there is a map $\pi$ from $S^2\cong\partial M$ to $P$. The trivalent graph $\pi^{-1}(S(P))$ with vertices $\pi^{-1}(V(P))$ is called the DS-diagram of $(M,P)$.}

\section{Examples}
We give some explicit calculations of the invariant for a finite dimensional Hopf algebra $H$  over a field $\mathbb{K}$.

\subsection{Integral of Hopf algebra}\label{int}

Let $H$ be a finite dimensional Hopf algebra.
A \textit{\textbf{right integral}} $\mu_R\in H^*$ and a \textit{\textbf{right cointegral}} $e_R\in H$ are elements which satisfy
\begin{align*}
    \mu_R(x_{(1)})x_{(2)}=\mu_R(x)\cdot 1_H, \quad e_R x=\epsilon(x)e_R,
\end{align*}
respectively, for any $x\in H$.
It is known that each finite dimensional Hopf algebra admits unique non-zero integrals up to scalar multiplication.
In what follows we assume $\mu_R(e_R)=1$.

\begin{lem}\label{antipode-integral}
We have
\begin{figure}[H]
    \centering
    \includegraphics{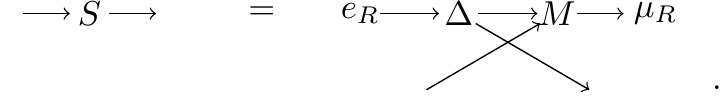}
\end{figure}
\end{lem}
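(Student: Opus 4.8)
The plan is to verify the claimed graphical identity by unwinding the defining equations of the right integral $\mu_R\in H^*$ and the right cointegral $e_R\in H$, together with the anti-coalgebra/anti-algebra property of $S$ and the standard Radford-type formulas relating $S$, $\mu_R$ and $e_R$. First I would fix conventions: writing the left-hand side of the pictured equation as a tensor network built from $\mu_R$, $e_R$, the (co)product, and a power of $S$, I would peel off the outermost structure map using the cointegral axiom $e_R x=\epsilon(x)e_R$ (equivalently its dual $\mu_R(x_{(1)})x_{(2)}=\mu_R(x)1_H$), which collapses one multiplication/comultiplication node into a counit. The key algebraic input is that applying $S$ to an integral produces the opposite-handed integral up to the distinguished grouplike (modular) elements; since the normalization $\mu_R(e_R)=1$ is assumed, these grouplike factors are forced to cancel in the particular combination appearing here, and one is left with exactly the right-hand diagram.

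The steps, in order: (i) translate both sides of the displayed figure into explicit composites of $M,\Delta,\epsilon,S,S^{-1},\mu_R,e_R$ in $\mathcal{C}$; (ii) use Proposition~\ref{prop:antipode is antialgebra} and Proposition~\ref{prop:antipode has inv} to move all antipodes to one side, turning $S\circ M$ into $M^{\mathrm{op}}\circ(S\otimes S)$ and $S\circ\Delta$ into $(S\otimes S)\circ\Delta^{\mathrm{op}}$, so that the antipode powers reorganize into a single clean power of $S$; (iii) apply the (co)integral axioms to contract the resulting network, using that $\mu_R$ kills the first tensor leg of a coproduct up to a unit and $e_R$ absorbs multiplication up to a counit; (iv) read off that the normalization $\mu_R(e_R)=1$ makes the leftover scalar trivial, matching the right-hand side. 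Throughout I would keep the endpoint labels (as in the main theorem's proof) so that deformations of the tensor network are traceable and no hidden symmetry/twist is used beyond the trivial-twist hypothesis on $\mathcal{C}$.

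I expect the main obstacle to be step (ii)–(iii): bookkeeping the exact power of the antipode and making sure the distinguished grouplike elements (the modular element of $H$ and of $H^*$) that generically appear when one pushes $S$ through an integral genuinely cancel in this configuration rather than surviving as a nontrivial factor. This is the place where one must invoke the correct form of Radford's formula $S^2(x)=g\,\bigl(\alpha\rightharpoonup x\leftharpoonup\alpha^{-1}\bigr)\,g^{-1}$ together with $S(e_R)$ and $\mu_R\circ S$ identities, and check that the combination dictated by the figure is exactly the one in which these corrections are inverse to each other. Once that cancellation is confirmed, the remaining manipulations are routine applications of the Hopf-monoid axioms and the defining properties of $\mu_R$ and $e_R$, carried out diagrammatically in the tensor-network calculus of Section~\ref{ssec:tn}.
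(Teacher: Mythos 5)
Your sketch does not close the argument: you yourself flag the decisive step --- that the distinguished grouplike corrections appearing when you push powers of $S$ through $\mu_R$ and $e_R$ cancel exactly --- as an ``expected obstacle'' to be ``confirmed'', and nothing in the proposal confirms it. Moreover, the tools you propose for that step are problematic in context: the precise identities for $S(e_R)$ and $\mu_R\circ S$ (with their scalar factors) are the content of Lemma \ref{lem:eigenvalue of antipode}, which the paper proves \emph{using} Lemma \ref{antipode-integral}; invoking them here is circular unless you rederive them independently, e.g.\ via Radford's formula, and that rederivation is exactly the bookkeeping you leave undone. So as written the proposal has a genuine gap at its central point, and it is also considerably heavier machinery than the statement requires.

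The paper's proof is different and much shorter: the figure exhibits a candidate tensor network built only from $\mu_R$, $e_R$, the product and the coproduct, and one checks directly from the defining equations $\mu_R(x_{(1)})x_{(2)}=\mu_R(x)1_H$, $e_Rx=\epsilon(x)e_R$ and the normalization $\mu_R(e_R)=1$ that this network satisfies the antipode axioms, i.e.\ it is a two-sided convolution inverse of $\mathrm{id}_H$. Since the antipode of a finite dimensional Hopf algebra is unique, the network must coincide with the antipode as the figure claims --- no Radford formula, no modular elements, no cancellation argument is needed. If you insist on the direct-verification route, you must actually exhibit the cancellation of the grouplike factors without appealing to Lemma \ref{lem:eigenvalue of antipode}; the uniqueness-of-antipode argument is both simpler and the intended one.
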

\begin{proof}
The assertion follows from the uniqueness of the antipode for a finite dimensional Hopf algebra and from the fact that the above tensor network satisfies the axioms of antipode.
\end{proof}
Let $a=\mu_R(e_{R(2)})e_{R(1)}\in H$ and $\alpha=\mu_R(?e_R)\in H^*$, which are group-like elements, i.e., we have $\Delta(a)=a\otimes a$ and $\alpha(xy)=\alpha(x)\alpha(y)$. Set $q=\alpha(a)$. Note that $q$ does not depend on the choice of integrals satisfying $\mu_R(e_R)=1$.

Set $e_L= S^{-1}(e_R)$ and $\mu_L=\mu_R\circ S$, which turn out to be a \textit{\textbf{left cointegral}} and a \textit{\textbf{left integral}}, respectively.

\begin{lem}\label{lem:eigenvalue of antipode}
We have
\begin{align*}
    S(e_R) &= qe_L, \quad \mu_L(e_R)=q.
\end{align*}
\end{lem}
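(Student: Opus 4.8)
The plan is to prove both equalities simultaneously by exploiting the interplay between the group-like elements $a = \mu_R(e_{R(2)})e_{R(1)}$, $\alpha = \mu_R(?e_R)$ and the antipode, together with the already-established identities in Lemma \ref{antipode-integral} and the defining properties of $e_R, \mu_R$. First I would recall that $e_L = S^{-1}(e_R)$ and $\mu_L = \mu_R \circ S$ are the left cointegral and left integral, so that $S(e_R) = S(S(e_L)) = S^2(e_L)$; since left cointegrals are unique up to scalar, $S(e_R)$ must be a scalar multiple of $e_L$, say $S(e_R) = \lambda e_L$ for some $\lambda \in \mathbb{K}^\times$. The task is then to identify $\lambda = q = \alpha(a)$.

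To pin down $\lambda$, I would apply $\mu_L$ to the relation $S(e_R) = \lambda e_L$, giving $\mu_L(S(e_R)) = \lambda \mu_L(e_L)$. Normalizing $\mu_R(e_R) = 1$, one computes $\mu_L(e_L) = \mu_R(S(S^{-1}(e_R))) = \mu_R(e_R) = 1$, so $\lambda = \mu_L(S(e_R)) = \mu_R(S^2(e_R))$. The heart of the matter is then the classical fact (due to Radford, and provable here in the Hopf monoid language using only Lemma \ref{antipode-integral} and Proposition \ref{prop:antipode is antialgebra}) that $\mu_R \circ S^2 = \alpha(a)\,\mu_R$ as functionals, equivalently that $S^2$ acts on the one-dimensional space of right integrals by the scalar $\alpha(a) = q$; applying this to $e_R$ gives $\mu_R(S^2(e_R)) = q\,\mu_R(e_R) = q$, hence $\lambda = q$ and $S(e_R) = q e_L$. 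This simultaneously yields the second identity: $\mu_L(e_R) = \mu_R(S(e_R)) = \mu_R(q e_L) = q\,\mu_R(S(e_R))$... more directly, from $S(e_R) = q e_L$ and applying $\mu_R$ we get $\mu_R(S(e_R)) = q\,\mu_R(e_L)$, i.e. $\mu_L(e_R) = q\,\mu_R(e_L)$; and $\mu_R(e_L) = \mu_R(S^{-1}(e_R))$ which, by the analogous statement $\mu_R \circ S^{-2} = \alpha(a)^{-1}\mu_R$ combined with $\mu_R \circ S = \mu_L$, equals $1$ after unwinding — alternatively one reads $\mu_L(e_R) = q$ off by pairing $S(e_R) = q e_L$ with $\mu_R$ after noting $\mu_R(e_L) = \mu_L(e_R)/q$ is consistent only with $\mu_L(e_R) = q$ once $\mu_R(e_L)=1$ is checked by the normalization.

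The main obstacle is establishing that $S^2$ scales right integrals by exactly $q = \alpha(a)$ rather than by some other unit; this is the nontrivial input and is precisely Radford's formula for $S^4$ / the action of $S^2$ on integrals. In the Hopf monoid setting I would derive it from Lemma \ref{antipode-integral}, which already packages the key tensor-network identities relating $S$, $\mu_R$, $e_R$, and the distinguished group-likes, so the argument should reduce to a short diagrammatic manipulation rather than a fresh computation. A cleaner route, which I would actually adopt in the writeup, is to bypass $\mu_L(e_L)$ and $\mu_R(e_L)$ entirely: write $S(e_R) = \lambda e_L$, apply $\mu_L$ to both sides using $\mu_L(e_L) = 1$ to get $\lambda = \mu_L(S(e_R)) = (\mu_R \circ S^2)(e_R)$, then invoke the $S^2$-scaling fact to conclude $\lambda = q$; then for the second equality apply $\mu_R$ to $S(e_R) = q e_L$ to get $\mu_L(e_R) = q\,\mu_R(e_L)$, and separately verify $\mu_R(e_L) = 1$ by the symmetric computation $\mu_R(e_L) = \mu_R(S^{-1}(e_R)) = (\mu_R \circ S^{-1})(e_R) = \mu_L \circ S^{-2}(e_R)$, which equals $q^{-1}\mu_L(e_R)$, giving the consistent pair $\mu_L(e_R) = q$, $\mu_R(e_L) = 1$.
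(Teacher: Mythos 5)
Your reduction of the first identity is sound and follows the same skeleton as the paper's proof: $S(e_R)$ is a left cointegral, hence $S(e_R)=\lambda e_L$ by uniqueness, and pairing with $\mu_L$ (using $\mu_L(e_L)=\mu_R(S(S^{-1}(e_R)))=\mu_R(e_R)=1$) gives $\lambda=\mu_R(S^2(e_R))$. However, the step you yourself label ``the nontrivial input'' --- that $\mu_R\circ S^2=\alpha(a)\,\mu_R$, i.e.\ $\mu_R(S^2(e_R))=q$ --- is precisely the entire content of the paper's proof, which establishes it by an explicit tensor-network computation using Lemma \ref{antipode-integral} twice together with the definitions of $a$ and $\alpha$ and their group-likeness. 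You only assert it as a classical fact of Radford and promise that it ``should reduce to a short diagrammatic manipulation,'' without carrying out that manipulation or giving a precise citation whose conventions match the paper's ($a=\mu_R(e_{R(2)})e_{R(1)}$, $\alpha=\mu_R(?\,e_R)$, $e_Rx=\epsilon(x)e_R$); a sided-convention mismatch here flips $q\leftrightarrow q^{-1}$. The part you do prove is only the formal one: $\mu_R\circ S^2$ again satisfies the right-integral condition and so is proportional to $\mu_R$; identifying the scalar with $\alpha(a)$ is the real work and is missing.

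The second identity $\mu_L(e_R)=q$ is genuinely not established in your writeup. You reduce it to $\mu_R(e_L)=1$, but your ``verification'' is circular: the relation $\mu_R(e_L)=(\mu_L\circ S^{-2})(e_R)=q^{-1}\mu_L(e_R)$ is literally the same equation as $\mu_L(e_R)=q\,\mu_R(e_L)$, so the ``consistent pair'' argument determines nothing (any common rescaling of the two numbers is equally consistent), and $\mu_R(e_L)=1$ does not follow ``by the normalization,'' which only fixes $\mu_R(e_R)=1$. Note also that $\mu_R\circ S^{\pm1}$ is a left, not a right, integral functional, so one-dimensionality of integrals cannot convert the first identity into the second; an independent computation of $\mu_L(e_R)=\mu_R(S(e_R))$ is required --- this is what the paper means by proving ``the second one similarly,'' namely a second diagrammatic computation via Lemma \ref{antipode-integral} (after which $\mu_R(e_L)=1$ becomes a corollary, as used later for $L(2,1)$, rather than an input).
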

\begin{proof}
We prove the first equality. We can prove the second one similarly.

We set $S(e_R)=ke_L$ for $k\in \mathbb{K}$ and prove $k=q$. 
We have  $\mu_R(S^2(e_R))=\mu_R(kS(e_L))=k \mu_R(e_R)=k$ by the normalization,
thus it is enough to prove $\mu_R(S^2(e_R))=q$.
We have $\mu_R(S^2(e_R))=$
\begin{figure}[H]
    \centering
    \includegraphics{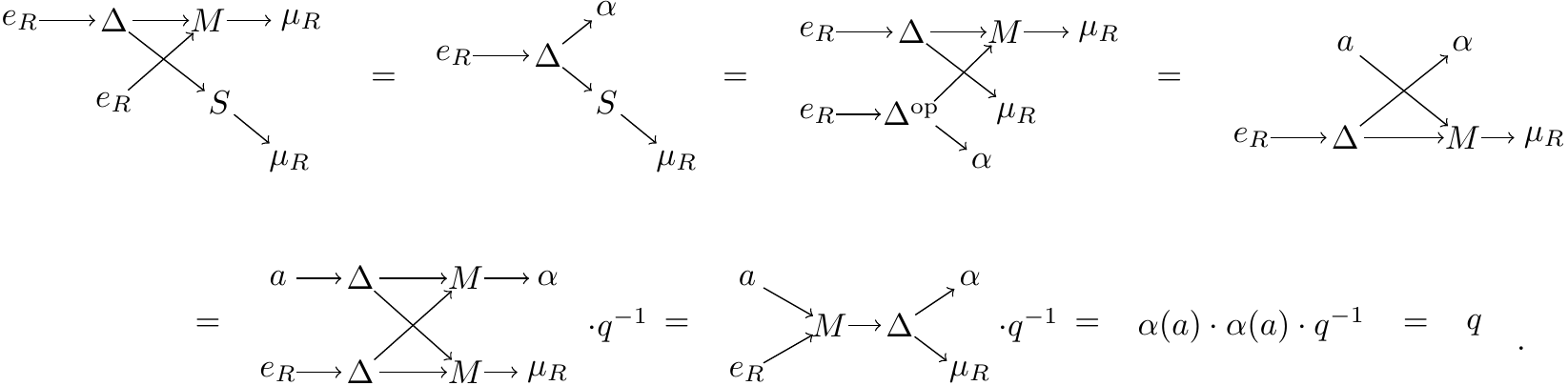}
\end{figure}

\noindent Here, the first and the third equalities are by Lemma \ref{antipode-integral}, the second and the fourth equalities follow from the definitions of $\alpha$ and $a$, respectively, and  the fifth equality follows from the fact that $a$ and $\alpha$  are group-like, and the sixth equality uses the definitions of $\alpha$ and $a$ at the same time.

\end{proof}

\begin{lem}\label{lem:twist lemma}
We have
\begin{figure}[H]
    \centering
    \includegraphics[scale=0.8]{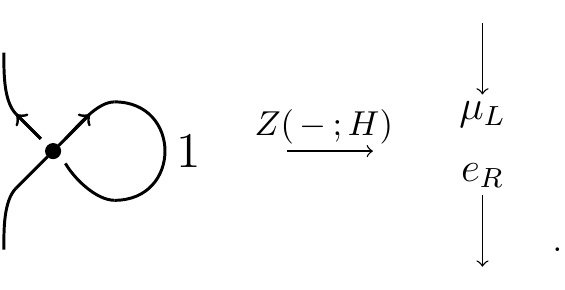}
\end{figure}
\end{lem}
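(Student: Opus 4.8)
The plan is to reduce the claimed identity to the behavior of the integral and cointegral under the antipode, which we have already pinned down in Lemmas \ref{antipode-integral} and \ref{lem:eigenvalue of antipode}. First I would read off from the tensor network exactly what map is being asserted to equal a scalar multiple of another elementary tensor network (typically the claim is that precomposing or postcomposing the "bubble" made from $e_R$ and $\mu_R$ with some power of $S$ rescales it by a power of $q$, or that a twisted loop equals $q^{\pm k}$ times the untwisted one). The key structural fact is that $\mu_R$ and $e_R$ are unique up to scalar, so any tensor network built from them which again satisfies the defining equations of a (co)integral must be a scalar multiple of the original, and that scalar is computed by feeding in the normalization $\mu_R(e_R)=1$.

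The main steps, in order, would be: (1) rewrite the left-hand tensor network by sliding the antipode boxes along the strands, using that $S$ (hence $S^{-1}$) is an anti-(co)algebra morphism (Proposition \ref{prop:antipode is antialgebra}, Proposition \ref{prop:antipode has inv}) to move all of them onto the $e_R$-leg or the $\mu_R$-leg; (2) apply Lemma \ref{antipode-integral} to convert $\mu_R\circ S$ into $\mu_L$ and, dually, to handle $S(e_R)$; (3) invoke Lemma \ref{lem:eigenvalue of antipode}, i.e. $S(e_R)=qe_L$ and $\mu_L(e_R)=q$, together with $S(e_L)=q^{-1}e_R$ and the analogous statement for $\mu_R$, to pull out the appropriate power of $q=\alpha(a)$; (4) use that $a$ and $\alpha$ are group-like to see that the surviving evaluations telescope to a single power of $q$ times the untwisted network; (5) fix the sign of the exponent by evaluating both sides on the normalization $\mu_R(e_R)=1$, exactly as in the proof of Lemma \ref{lem:eigenvalue of antipode}.

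I expect the main obstacle to be purely bookkeeping: keeping track of which legs carry $S$ versus $S^{-1}$ after the isotopies, and correctly matching the orientation conventions of the tensor network (reading left-to-right, endpoints numbered top-to-bottom) against the algebraic statements $S(e_R)=qe_L$, $\mu_L(e_R)=q$. In particular one must be careful that $S^2$ is a genuine (co)algebra morphism — so $S^{2}$-boxes slide freely — whereas odd powers of $S$ reverse multiplication and comultiplication, which is where stray factors of $q$ are generated. Once the $S$-boxes are all gathered onto the cointegral leg and resolved via Lemma \ref{lem:eigenvalue of antipode}, the remaining identity is an instance of the uniqueness of integrals, and the scalar is forced by normalization; there is no genuine topological or homological input needed here, only the Hopf-algebraic facts already established.
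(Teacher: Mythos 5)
There is a genuine gap. The left-hand side of Lemma \ref{lem:twist lemma} is a tensor network built only from the structure maps of $H$ --- a vertex (a $\Delta$ and an $M$) with one strand closed into a loop, decorated by powers of $S^{2}$ coming from the integer weights --- and it contains no integral or cointegral at all; the elements $\mu$ and $e$ appear for the first time on the right-hand side. Your plan begins by ``sliding the antipode boxes onto the $e_R$-leg or the $\mu_R$-leg'' and then rescaling via Lemma \ref{lem:eigenvalue of antipode}, but there is no such leg on the left-hand side to slide onto, so steps (1)--(4) never get started. Lemma \ref{antipode-integral}, Lemma \ref{lem:eigenvalue of antipode}, the group-likeness of $a$ and $\alpha$, and the normalization $\mu_R(e_R)=1$ only govern how integrals transform once they are already present; they cannot produce the pair cointegral-tensor-integral out of a closed loop, and that production is the entire content of the lemma.

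Your closing appeal to uniqueness of integrals does gesture at a correct strategy, but you never carry out the step it requires: one must first prove that the partially closed (twisted) loop tensor satisfies the defining equations of a one-sided (co)integral in each of its two legs, and identify which left/right versions and which powers of $S$ --- hence which power of $q=\alpha(a)$ --- occur; only then does the normalization $\mu_R(e_R)=1$ pin down the scalar. That verification is not bookkeeping: it uses finite-dimensionality and is precisely the nontrivial input, which the paper supplies by a short graphical manipulation whose key (third) equality is Lemma 3.3 of \cite{Ku2} --- a result that is not among the lemmas you invoke and does not follow from them by sliding antipodes. To repair the argument you would either cite that lemma, as the paper does, or prove the corresponding statement directly for the loop tensor, and only afterwards run your rescaling and normalization computation.
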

\begin{proof}

We have
\begin{figure}[H]
    \centering
    \includegraphics{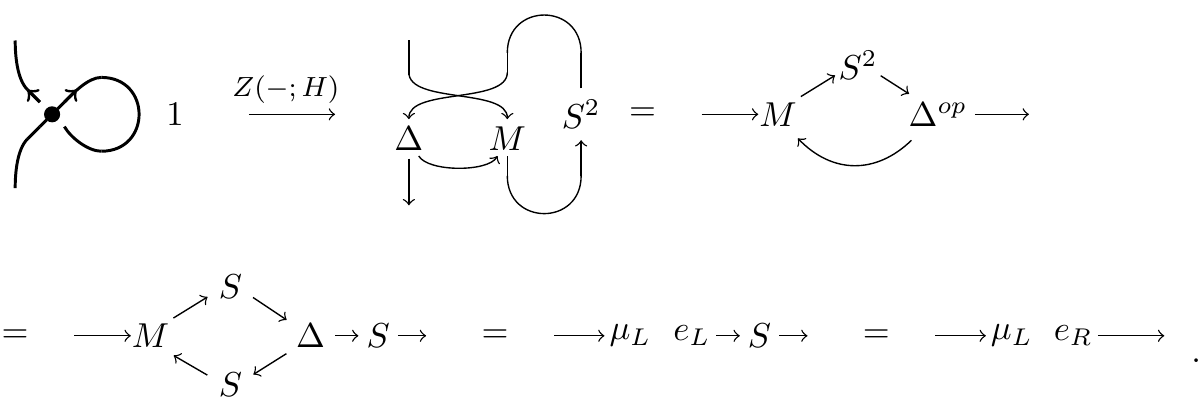}
\end{figure}
\noindent For the third equality, see \cite{Ku2}*{Lemma 3.3}.

\end{proof}
\subsection{Calculation  for $S^3$ and  $L(2,1)$ with certain framings}\label{subsec: Hopf ex}

\begin{ex}
\label{ex:invariant of S3}

For the framed $S^3$  in Example \ref{lens},
 by Lemma \ref{lem:twist lemma} we have
\begin{figure}[H]
    \centering
    \includegraphics{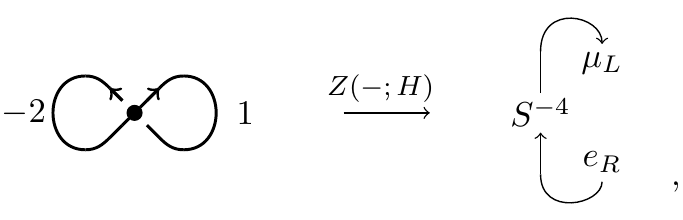}
\end{figure}

\noindent and then by Lemma  \ref{lem:eigenvalue of antipode} we have
\begin{align*}
Z(S^3,f;H)&=\mu_L(S^{-4}(e_R))
\\&=q^{-1}.    
\end{align*}
\end{ex}

\begin{ex}
For the framed lens space $L(2,1)$ in Example \ref{lens}, we have
\begin{figure}[H]
    \centering
    \includegraphics[scale=0.85]{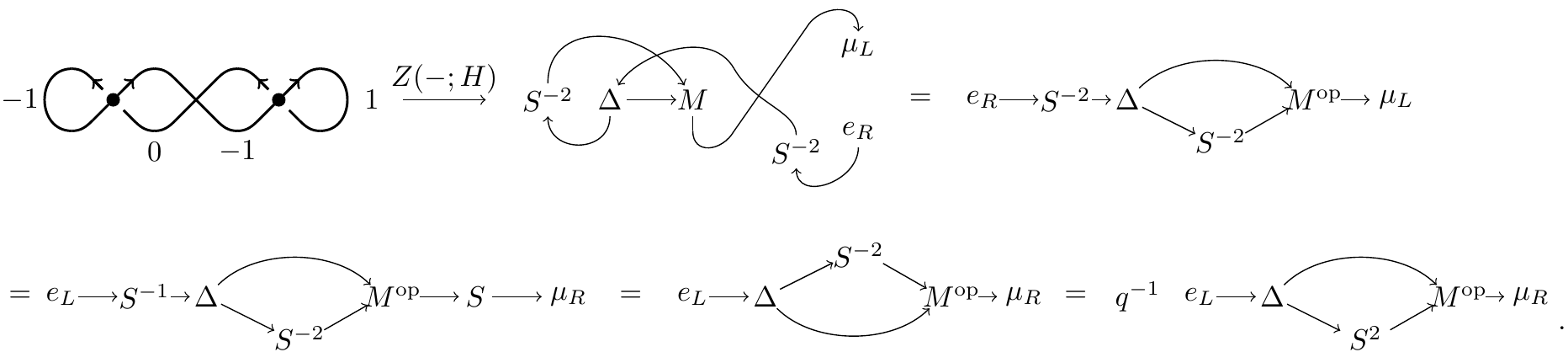}
\end{figure}

\noindent Since $\mu_R(e_L)=1$, by \cite{R}*{Theorem 10.4.1}, we have
\begin{align*}
    Z(L(2,1),f;H) &= q^{-1}\mu_R(S^2(e_{L(2)})e_{L(1)}) \\
    &= q^{-1}\text{Tr}(S).
\end{align*}
\end{ex}

\begin{rem}Both of the above examples match with the Kuperberg invariant \cite{Ku2}  up to multiplication by $q$. As we mentioned in the introduction,  we do not know the explicit relation in general between our invariant and the Kuperberg invariant.
\end{rem}

\subsection{Invariant for small quantum Borel subalgebra of $\mathfrak{sl}_2$ and $\text{SO}(3)$ WRT invariant}
Let us consider the invariant for the Hopf algebra $u_q(\mathfrak{sl}_2^+)$ (Example \ref{quantum Borel subalgebra}) with $q$ the $n$-th primitive root of unity.
In this case, the scalar $q=\alpha(a)$ defined in Section \ref{int} coincides, and thus we have
\begin{align*}
    Z(S^3,f;u_q(\mathfrak{sl}_2^+))=q^{-1}.
\end{align*}

In the case of $L(2,1)$, computing the $\text{Tr}(S)$ of $u_q(\mathfrak{sl}_2^+)$ in some basis such as the one in \cite{Ku2}*{Section 5} results in
\begin{align*}
    Z(L(2,1),f;u_q(\mathfrak{sl}_2^+))=2q^{-1}\frac{1-q^{-\lfloor \frac{n+1}{2} \rfloor}}{1-q^{-1}}.
\end{align*}

When $q$ is a primitive root of unity of odd order $N$, the above values times the cardinality of the first homology group matches with $\text{SO}(3)$ WRT invariant $\tau_N^{\text{SO}(3)}(M)$ defined in \cite{KM} up to multiplication by $q$.

\begin{conj}\label{vsWRT}
Let $q$ be a primitive root of unity of odd order $N$.
Then for every closed oriented framed 3-manifold $M$ with $b_1(M)=0$ there exists an integer $k$ such that
\begin{align*}
Z(M,f;u_q(\mathfrak{sl}_2^+))=q^k\cdot|H_1(M)|\cdot\tau_N^{\text{SO}(3)}(M)
\end{align*}
where $|H_1(M)|$ is the cardinality of the first homology group.
\end{conj}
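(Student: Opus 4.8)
The plan is to re-express $Z(M,f;u_q(\mathfrak{sl}_2^+))$ as a surgery-theoretic invariant and then match it with the $SO(3)$ WRT invariant through the Hennings construction. First I would extend the reconstruction of the universal quantum invariant from the Heisenberg double in \cite{S} from tangle complements to closed manifolds: a framed integral normal o-graph representing $(M,f)$ determines an ideal triangulation of the complement of a surgery link in $S^3$ together with the surgery and framing data, and contracting the per-tetrahedron tensors $T^{\pm1}$ should reproduce the universal quantum invariant of that surgery link associated with the Drinfeld double $D(u_q(\mathfrak{sl}_2^+))$ — whose $R$-matrix is built out of the pentagon element $T$ and which acts on $\mathcal{H}(u_q(\mathfrak{sl}_2^+))$, this action being what makes the identification possible. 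The integer weights, which encode the framing through $\delta x = -c_{P}$, must be shown to become the framing normalization of the universal invariant, and the scalar contributed by each tetrahedron and each weighted edge has to be gathered into one global factor.

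Next I would pass from $D(u_q(\mathfrak{sl}_2^+))$ to the small quantum group $u_q(\mathfrak{sl}_2)$ using the surjection of ribbon Hopf algebras $D(u_q(\mathfrak{sl}_2^+)) \twoheadrightarrow u_q(\mathfrak{sl}_2)$, so that the Hennings-Kauffman-Radford-normalized invariant of $D(u_q(\mathfrak{sl}_2^+))$ obtained in Step 1 equals the Hennings invariant of $u_q(\mathfrak{sl}_2)$ up to a scalar coming from the kernel of the projection. For $q$ a primitive root of unity of odd order $N$, the Hennings invariant of $u_q(\mathfrak{sl}_2)$ (equivalently of its restricted version) is known to equal $|H_1(M)|\cdot\tau_N^{\mathrm{SO}(3)}(M)$ up to a framing-dependent power of $q$ — the familiar phenomenon whereby a non-semisimple Hennings invariant picks up the order of the first homology relative to the semisimple WRT invariant. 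Combining the three comparisons gives the conjectured equality, with $k$ the sum of the accumulated powers of $q$; since the left-hand side depends on $f$ while the right-hand side does not, these powers must organize into the framing anomaly, computable from the self-linking data carried by the integer weights of the o-graph.

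I expect the main obstacle to be Step 1 together with the precise tracking of the scalar factors. The reconstruction of \cite{S} is stated only for tangle complements, so the closed case — in particular the dictionary translating the edge weights of a framed integral normal o-graph into surgery/framing coefficients, and the passage from the Heisenberg double (no ribbon structure) to the Drinfeld double (ribbon, hence surgery-ready) — has to be developed from scratch, and here the hypothesis $b_1(M)=0$, which already underlies Proposition \ref{framed branched spine is one to one}, is likely to be used essentially. A secondary difficulty is that the scalar from the kernel of $D(u_q(\mathfrak{sl}_2^+))\twoheadrightarrow u_q(\mathfrak{sl}_2)$ is a priori governed by $|H^1(M;\mathbb{Z}_N)|$, which is \emph{not} $|H_1(M)|$ in general, so one must show that this contribution recombines with the global factor from Step 1 to produce exactly $|H_1(M)|$. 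The explicit computations of Section 5 — for instance the value obtained for $L(2,1)$ — already pin down $k$ in the lens-space cases and are consistent with the conjectured product form, so any candidate argument can be sanity-checked against them.
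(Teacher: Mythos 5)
First, a point of comparison: the statement you are trying to prove is stated in the paper as a \emph{conjecture}, and the paper offers no proof of it. The authors' only evidence is the explicit computations of Section 5 ($Z(S^3,f)=q^{-1}$ and $Z(L(2,1),f)=2q^{-1}(1-q^{-\lfloor (n+1)/2\rfloor})/(1-q^{-1})$), which they observe agree, up to a power of $q$, with $|H_1(M)|\cdot\tau_N^{\text{SO}(3)}(M)$; they explicitly say they do not know the exact relation of their invariant even to the Kuperberg invariant. So there is no ``paper proof'' to match your argument against, and your proposal should be judged as a proof attempt of an open statement.

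As such, your proposal is a reasonable research outline but not a proof, and the gap sits exactly where you place it: Step 1. The reconstruction in the second author's earlier work produces the universal invariant of a \emph{framed tangle} from an ideal triangulation of its complement; nothing in the present paper (or in that work) provides a dictionary from a framed integral normal o-graph of a \emph{closed} $(M,f)$ to a surgery link in $S^3$ whose universal/Hennings invariant equals the contraction $Z(M,f;H)$. In particular, the integer weights encode a trivialization of $TM$ via $\delta x=-c_P$, not surgery coefficients or a $2$-framing, and the passage from the Heisenberg double (which has no ribbon structure and whose only simple module is the Fock space) to the ribbon Drinfeld double is precisely the step that would have to be invented; asserting that the pentagon element ``becomes'' the $R$-matrix does not by itself produce a surgery presentation or control the normalization. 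Your later steps lean on genuinely known results (the Hennings invariant of $u_q(\mathfrak{sl}_2)$ at odd roots of unity equals $|H_1(M)|\cdot\tau_N^{\text{SO}(3)}(M)$), but the scalar bookkeeping you defer --- the kernel of $D(u_q(\mathfrak{sl}_2^+))\twoheadrightarrow u_q(\mathfrak{sl}_2)$ contributing something like $|H^1(M;\mathbb{Z}_N)|$ rather than $|H_1(M)|$, and the framing-dependent power $q^k$ --- is not a secondary difficulty but part of the conjecture's content. So the proposal identifies a plausible route and correctly locates its hard points, but it does not close them; the conjecture remains unproved by this argument.
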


Recall that WRT invariant is an invariant of 2-framed 3-manifold, where one usually chooses canonical 2-framing to compute it \cite{Ati}.
Since framing $f$ induces a 2-framing $v_2$, we expect that the following holds: 
\begin{align*}
    Z(M,f;u_q(\mathfrak{sl}_2^+))=|H_1(M)|\cdot \tau_N^{\text{SO}(3)}(M, v_2).
\end{align*}
\begin{appendices}
\section{Heisenberg double construction}
\label{Heisenberg double construction}

In this appendix, we show an alternative construction of the invariant $Z(-;H)$ based on the Heisenberg double of a Hopf algebra. 
In this construction, we can see that the invariance of $Z(-;H)$ under the integral MP-move  comes from the pentagon equation of the canonical element of the Heisenberg double. 
Although the following arguments hold also for any Hopf monoid, in order to make the appendix short, we assume $H$ to be a finite dimensional Hopf algebra over a field $\mathbb{K}$.

\subsection{Heisenberg double}
We use the left action of $H$ on $H^*$ defined by $(a\rightharpoonup f)(x):=f(xa)$, for $a, x\in H$ and $f\in H^*$.
  The \textit{\textbf{Heisenberg double}} of $H$ is a $\mathbb{K}$-algebra $\HD(H)=H^*\otimes H$ with the unit $\epsilon\otimes 1$ and the product given by
  \begin{align*}
      (f\otimes a)\cdot(g\otimes b)=f\cdot(a_{(1)}\rightharpoonup g)\otimes a_{(2)}b
  \end{align*}
for $f,g\in H^*$ and $a,b\in H$.

The Heisenberg double has a \textit{\textbf{canonical element}}
\begin{align*}
    T=\sum_{i}(\epsilon\otimes e_i)\otimes(e^i\otimes 1) \in \HD(H)\otimes \HD(H)
    \end{align*}
   with the inverse  
 given by 
    \begin{align*}
    \overline{T}=\sum_{i}(\epsilon\otimes S(e_i))\otimes(e^i\otimes 1) \in \HD(H)\otimes \HD(H).
\end{align*}
It is known  \cites{BS,Ka}  that $T$ satisfies the pentagon equation 
\begin{align*}
T_{12}T_{13}T_{23}=T_{23}T_{12}  \in \HD(H)^{\otimes 3},
\end{align*}
 where $T_{12}=T\otimes 1$ etc.

Set
\begin{align*}
    G=\sum_{i,j}e^ie^j\otimes S^{-1}(e_j)S^{2}(e_i) \in\HD(H).
\end{align*}
We call $G$ a \textit{\textbf{pivotal element}} since $G$ is an analog of a pivotal element in a Hopf algebra, i.e., we have
\begin{align*}
    Gx=\Theta^2(x)G, \quad x\in\HD(H)
\end{align*}
for $\Theta=S^*\otimes S^{-1}\co\HD(H)\to \HD(H)$ being an analog of the antipode.

Recall that $\HD(H)$ has the canonical left module  $F(H^*)=(H^*, \rho)$ called the \textit{\textbf{Fock space}}, where 
\begin{align*}
    \rho(f\otimes a)(h)=f\cdot(a\rightharpoonup h)
\end{align*}
for $f\otimes a\in\HD(H)$ and $h\in H^*$.
It is known that the Heisenberg double  is semisimple and the Fock space is its only simple module.
Furthermore, the character $\raisebox{2pt}{$\chi$}_{Fock} \co \HD(H)/[\HD(H),\HD(H)]\to\mathbb{K}$ associated to $F(H^*)$ is actually an isomorphism (cf. \cite{MST}), where $[\HD(H),\HD(H)]$ is the vector space over $\mathbb{K}$ spanned by $\{xy-yx\,|\,x,y\in\HD(H)\}$.

\subsection{Alternative definition of invariant}
An \textit{\textbf{$\HD(H)$-decorated diagram}} is an oriented closed curve immersed in $\mathbb{R}^2$, where the self-intersections are transverse double points, with a finite number of dots each of which is labeled with an element of $\HD(H)$.
These dots are called beads.
We consider $\HD(H)$-decorated diagrams up to planer isotopy and the moves in Figure \ref{fig:crm} and \ref{fig:bs}. Note that we can  slide beads freely along the curve using planer isotopy and beads sliding over a crossing.

\begin{figure}[H]
  \centering
  \begin{minipage}{0.61\columnwidth}
    \centering
    \includegraphics{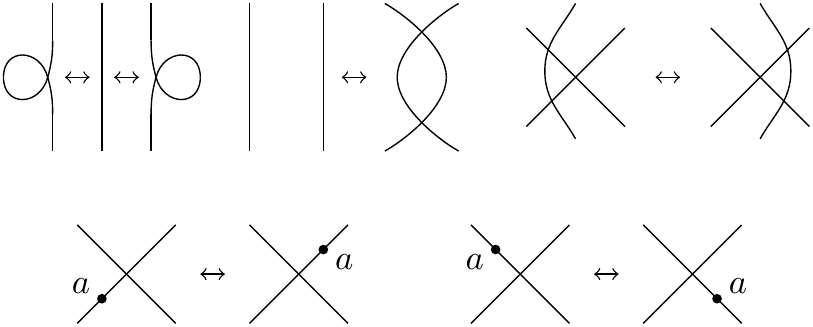}
    \caption{Reidemeister type moves and beads sliding over crossing.}
    \label{fig:crm}
  \end{minipage}
    \begin{minipage}{0.37\columnwidth}
    \centering
    \includegraphics{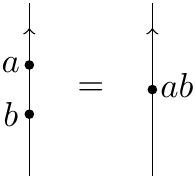}
    \caption{Product of beads.}
    \label{fig:bs}
  \end{minipage}
\end{figure}

Let $\Gamma$ be  an integral normal o-graph.  For simplicity, we assume that $\Gamma$ has one component as a knot diagram.  We define a scalar $Z(\Gamma;\HD(H))\in \mathbb{K}$ as follows. Let us write the canonical element $T$ as $T_1\otimes T_2$ and its inverse  $\overline{T}$ as $\overline{T}_1\otimes \overline{T}_2$ using Sweedler notation.
We replace each crossing and edge as in Figure \ref{fig:beads associated to vertices} to get a $\HD(H)$-decorated diagram associated with $\Gamma$. 
\begin{figure}[H]
  \includegraphics{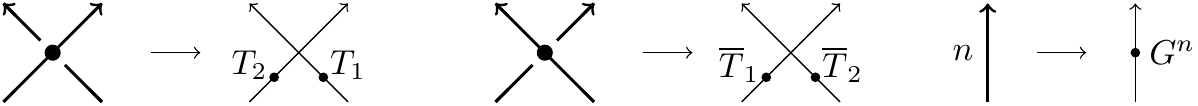}
  \caption{Definition of invariant.}
  \label{fig:beads associated to vertices}
 \end{figure}
\noindent Then, we slide the beads along the curves and merge them into one bead labeled by an element $J\in\HD(H)$, 
which is well-defined only in $\HD(H)/[\HD(H),\HD(H)]$
because of the relation in Figure \ref{fig:bs2}.
\begin{figure}[H]
  \centering
  \begin{minipage}{0.5\columnwidth}
    \centering
    \includegraphics{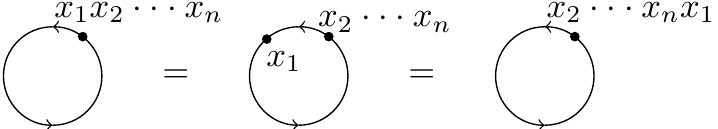}
    \caption{Cyclicity of beads.}
    \label{fig:bs2}
  \end{minipage}
\end{figure}
\noindent By composing the linear isomorphism $\raisebox{2pt}{$\chi$}_{Fock} \co \HD(H)/[\HD(H),\HD(H)]\to\mathbb{K}$, we get a well-defined scalar $Z(\Gamma;\HD(H))=\raisebox{2pt}{$\chi$}_{Fock}(J)
\in\mathbb{K}$. Thus we obtain a map $Z(-;\HD(H))\co \mathcal{IG} \to \mathbb{K}$, which indeed gives an invariant of integral normal o-graphs. 

Note that the integral MP-move is the Pachner $(2,3)$ move (with a certain framing inside) in the dual ideal triangulation, and 
the image of the integral MP-move by the invariant is equivalent to the pentagon equation; for example for the following integral MP-move:
\begin{figure}[H]
    \centering
    \includegraphics[scale=0.8]{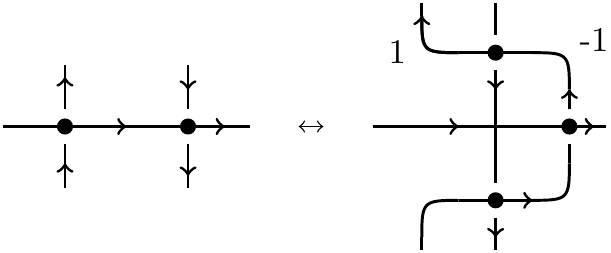}
\end{figure}
\noindent we have
\begin{figure}[H]
    \centering
    \includegraphics{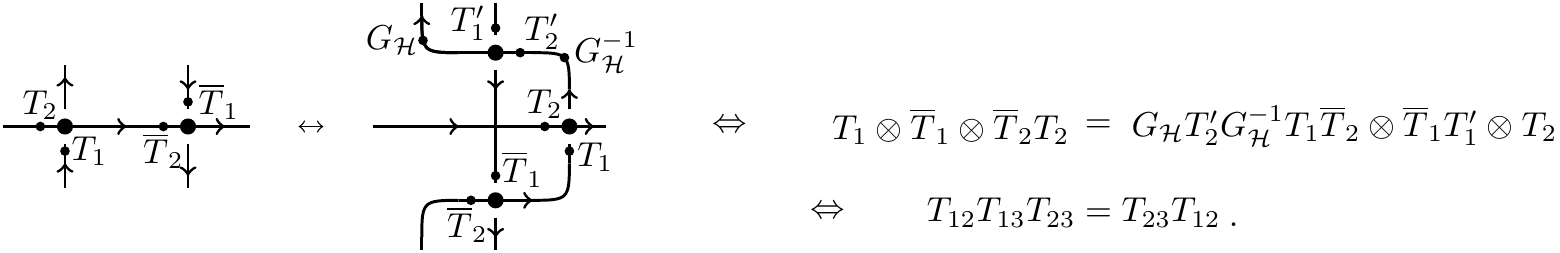}
\end{figure}
\noindent Observe that if we do not consider integer weights, i.e., if we consider the branched MP-move, then the resulting equality is a little different from the pentagon equation by the effect of $S^2$. The pivotal element plays a role to cancel out those effects, and the equality reduces to the pentagon equation.

Recall from Section \ref{sec:IF} the invariant $Z(-;H)$ of integral normal o-graphs.
\begin{prop}
Let $H$ be a finite dimensional Hopf algebra and $\Gamma$ an integral normal o-graph.
Then  we have
\begin{align*}
    Z(\Gamma;\HD(H))=Z(\Gamma;H).
\end{align*}
\end{prop}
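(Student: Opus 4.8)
The plan is to show that the two constructions $Z(-;H)$ and $Z(-;\HD(H))$ produce the same scalar by tracking how each local piece of an integral normal o-graph $\Gamma$ is translated, and then identifying the resulting global contractions. The key observation is that the Fock representation $\rho\co \HD(H)\to \mathrm{End}(H^*)$ is faithful (since $\HD(H)$ is semisimple with $F(H^*)$ its only simple module), so the bead-labelled diagram underlying $Z(\Gamma;\HD(H))$ can be pushed through $\rho$ to a tensor network over the vector space $H^*$; I would then compare this with the tensor network over $H$ defining $Z(\Gamma;H)$, using the self-duality $H^*\cong H$ and the explicit formula for $T$, $\overline T$ and the pivotal element $G$.

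First I would spell out $\rho(T_1)\otimes\rho(T_2)$ and $\rho(\overline T_1)\otimes\rho(\overline T_2)$ as maps on $H^*$. Since $T=\sum_i(\epsilon\otimes e_i)\otimes(e^i\otimes 1)$, one has $\rho(\epsilon\otimes e_i)(h)=e_i\rightharpoonup h$ and $\rho(e^i\otimes 1)(h)=e^i\cdot h$, so $\rho(T)$ acting on $H^*\otimes H^*$ is, after the identification $H^*\cong H$, exactly the structure map $\Delta$ (respectively $M$) appearing at a type-$+$ vertex in Figure~\ref{fig:Definition of the functor}; similarly $\rho(\overline T)$ recovers the type-$-$ vertex with its antipode decoration, because $\overline T$ carries an $S$ in its first leg. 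The edge weights translate likewise: a bead $G^{\pm k}$ on an edge becomes $(\Theta^2)^{\pm k}$ under $\rho$, and since $\Theta=S^*\otimes S^{-1}$ corresponds under the duality to $S^{\pm 2}$, this matches the $S^{\pm 2}$ inserted on each integer-weighted edge in $Z(-;H)$. Thus vertex-by-vertex and edge-by-edge the two local assignments agree once $\rho$ is applied.

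Next I would handle the closure. In $Z(\Gamma;\HD(H))$ one slides all beads together into a single element $J\in\HD(H)/[\HD(H),\HD(H)]$ and applies $\raisebox{2pt}{$\chi$}_{Fock}$, which by definition is the trace of $\rho(J)$ on the Fock space $H^*$. On the other side, closing up the tensor network $Z(\Gamma;H)$ amounts to taking a trace of the composite endomorphism of $H^{\otimes(\#\text{components})}$ built from the vertex and edge maps. Because $\rho$ intertwines the bead product with composition of endomorphisms of $H^*$, and because the cyclicity relation in Figure~\ref{fig:bs2} is precisely invariance of the trace under cyclic permutation, the scalar $\raisebox{2pt}{$\chi$}_{Fock}(J)=\mathrm{Tr}_{H^*}(\rho(J))$ equals the trace computed in $Z(\Gamma;H)$. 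For a multi-component $\Gamma$ one argues componentwise, the several beads $J^{(1)},\dots,J^{(r)}$ giving a tensor product of traces, matching the several closed loops of the tensor network.

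The main obstacle I expect is the bookkeeping at the crossings: showing that the bead-sliding-over-a-crossing move (Figure~\ref{fig:crm}) and the virtual crossings of the o-graph correspond, under $\rho$, to exactly the symmetry morphism $c$ used when assembling the tensor network, with no stray factor of $S^2$ or $G$. This is really the content of the remark in the appendix that "the pivotal element plays a role to cancel out those effects": one must check that the $G$'s built into the definition of the $\HD(H)$-beads conspire with $\Theta^2$-equivariance, $Gx=\Theta^2(x)G$, so that pulling a bead past a crossing on the $\HD(H)$ side is the image under $\rho$ of the (untwisted, since $\mathcal C$ has trivial twists) bead-slide on the $H$ side. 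Once this compatibility of the symmetry with the pivotal element is verified locally, the global identity $Z(\Gamma;\HD(H))=Z(\Gamma;H)$ follows by the functoriality of $\rho$ together with faithfulness and the trace identification above.
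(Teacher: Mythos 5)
Your proposal follows essentially the same route as the paper: push the bead-decorated diagram through the Fock representation $\rho$, check that $T$, $\overline T$ and the pivotal element $G$ act on $F(H^*)$ exactly as the local tensor networks of Figure \ref{fig:Definition of the functor} (the paper cites \cite{MST} for $T,\overline T$ and verifies $\rho(G)=(S^2)^*$ directly), and use that $\raisebox{2pt}{$\chi$}_{Fock}$ is the Fock-space trace to match the closed-up contractions. Your extra worries about crossings and the pivotal element are harmless (bead slides over crossings correspond trivially to the symmetry, and the appendix remark about $G$ cancelling $S^2$ concerns the MP-move, not crossings), so the argument is correct and matches the paper's.
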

\begin{proof}
We should check that the actions  of  $T$, $\overline{T}$, and $G$ on the Fock space $F(H^*)$ match with the tensor networks in Figure \ref{fig:Definition of the functor}.
For $T$ and $\overline{T}$, see the proof of \cite{MST}*{Proposition 6.2}. For $G$, we can check $\rho(G)(h)=(S^2)^*h$ for $h\in F(H^*)$ by a straightforward calculation.
\end{proof}

\end{appendices}

\begin{bibdiv}
\begin{biblist}

\bib{AK}{article}{
	author={Andersen, J. E.},
	author={Kashaev, Rinat},
    title={A TQFT from quantum Teichm\"{u}ller theory},
    journal={Comm. Math. Phys.},
    volume={330},
    date={2014},
    number={3},
    pages={887--934},
}

\bib{Ati}{article}{
   author={Atiyah, Michael},
   title={On framings of $3$-manifolds},
   journal={Topology},
   volume={29},
   date={1990},
   number={1},
   pages={1--7},
}

\bib{BS}{article}{
	author={Baaj, Saad},
	author={Skandalis, Georges},
    title={Unitaires multiplicatifs et dualit\'e pour les produits crois\'es de $\mathrm {C}^\ast $-alg\`ebres},
    journal={Ann. Sci. \'Ecole Norm. Sup. (4)},
    volume={26},
    date={1993},
    number={4},
    pages={425--488},
}

\bib{BB1}{article}{
   author={Baseilhac, S.},
   author={Benedetti, Riccardo},
   title={Quantum hyperbolic invariants of 3-manifolds with ${\rm PSL}(2,\mathbb{C})$-characters},
   journal={Topology 43 (2004), no. 6, 1373--1423.},
}

\bib{BW}{article}{
   author={Barrett, John W.},
   author={Westbury, Bruce W.},
   title={Invariants of piecewise-linear $3$-manifolds},
   journal={Trans. Amer. Math. Soc.},
   volume={348},
   date={1996},
   number={10},
   pages={3997--4022},
   issn={0002-9947},
}

\bib{BP}{book}{
   author={Benedetti, Riccardo},
   author={Petronio, Carlo},
   title={Branched standard spines of $3$-manifolds},
   series={Lecture Notes in Mathematics},
   volume={1653},
   publisher={Springer-Verlag, Berlin},
   date={1997},
   pages={viii+132},
   isbn={3-540-62627-1},
}
\bib{BP2}{article}{
   author={Benedetti, Riccardo},
   author={Petronio, Carlo},
   title={A finite graphic calculus for $3$-manifolds},
   journal={Manuscripta Math.},
   volume={88},
   date={1995},
   number={3},
   pages={291--310},
   issn={0025-2611},
}

\bib{FK}{article}{
   author={Faddeev, L. D.},
   author={Kashaev, R. M.},
   title={Quantum dilogarithm},
   journal={Modern Phys. Lett. A},
   volume={9},
   date={1994},
   number={5},
   pages={427--434},
   issn={0217-7323},
}

\bib{H}{article}{
   author={Hennings, Mark},
   title={Invariants of links and $3$-manifolds obtained from Hopf algebras},
   journal={J. London Math. Soc. (2)},
   volume={54},
   date={1996},
   number={3},
   pages={594--624},
   issn={0024-6107},
}

\bib{Ka2}{article}{
   author={Kashaev, R. M.},
   title={Quantum dilogarithm as a $6j$-symbol},
   journal={ Modern Phys. Lett. A},
   volume={9},
   date={1994},
   number={40},
   pages={3757--3768},
}

\bib{Ka3}{article}{
   author={Kashaev, R. M.},
   title={A link invariant from quantum dilogarithm},
   journal={Modern Phys. Lett. A},
   volume={10},
   date={1995},
   number={19},
   pages={1409--1418},
   issn={0217-7323},
}

\bib{Ka}{article}{
   author={Kashaev, R. M.},
   title={The Heisenberg double and the pentagon relation},
   journal={Algebra i Analiz},
   volume={8},
   date={1996},
   number={4},
   pages={63--74},
   issn={0234-0852},
   translation={
      journal={St. Petersburg Math. J.},
      volume={8},
      date={1997},
      number={4},
      pages={585--592},
      issn={1061-0022},
   },
}
\bib{Ka4}{article}{
   author={Kashaev, R. M.},
   title={The hyperbolic volume of knots from the quantum dilogarithm},
   journal={Lett. Math. Phys.},
   volume={39},
   date={1997},
   number={3},
   pages={269--275},
   issn={0377-9017},
   review={\MR{1434238}},
   doi={10.1023/A:1007364912784},
}

\bib{KR}{article}{
   author={Kauffman, Louis H.},
   author={Radford, David E.},
   title={Invariants of $3$-manifolds derived from finite-dimensional Hopf
   algebras},
   journal={J. Knot Theory Ramifications},
   volume={4},
   date={1995},
   number={1},
   pages={131--162},
   issn={0218-2165},
}

\bib{KM}{article}{
   author={Kirby, Robion},
   author={Melvin, Paul},
   title={The $3$-manifold invariants of Witten and Reshetikhin-Turaev for
   ${\rm sl}(2,{\bf C})$},
   journal={Invent. Math.},
   volume={105},
   date={1991},
   number={3},
   pages={473--545},
}

\bib{KM2}{article}{
    AUTHOR = {Kirby, Rob},
    AUTHOR = {Melvin, Paul},
     TITLE = {Canonical framings for {$3$}-manifolds},
   JOURNAL = {Turkish J. Math.},
    VOLUME = {23},
      YEAR = {1999},
    NUMBER = {1},
     PAGES = {89--115},
}

\bib{Ku1}{article}{
   author={Kuperberg, Greg},
   title={Involutory Hopf algebras and $3$-manifold invariants},
   journal={Internat. J. Math.},
   volume={2},
   date={1991},
   number={1},
   pages={41--66},
   issn={0129-167X},
}

\bib{Ku2}{article}{
   author={Kuperberg, Greg},
   title={Noninvolutory Hopf algebras and $3$-manifold invariants},
   journal={Duke Math. J.},
   volume={84},
   date={1996},
   number={1},
   pages={83--129},
   issn={0012-7094},
}

\bib{Law90}{article}{
author={R. J. Lawrence},
   title={A universal link invariant},
   journal={The interface of mathematics and particle physics (Oxford, 1988),  151--156, Inst. Math. Appl. Conf. Ser. New Ser., \textbf{24}, Oxford Univ. Press, New York, 1990.},
}

\bib{Mat}{book}{
   author={Matveev, Sergei},
   title={Algorithmic topology and classification of 3-manifolds},
   series={Algorithms and Computation in Mathematics},
   volume={9},
   edition={2},
   publisher={Springer, Berlin},
   date={2007},
}

\bib{MST}{article}{
   author={Mihalache, Serban Matei},
   author={Suzuki, Sakie},
   author={Terashima, Yuji},
   title={The Heisenberg double of involutory Hopf algebras and invariants of closed 3-manifolds},
   note={preprint (2021), arXiv:math.GT/2104.03037},
}

\bib{Oh}{article}{
   author={Ohtsuki, Tomotada},
   title={Colored ribbon Hopf algebras and universal invariants of framed
   links},
   journal={J. Knot Theory Ramifications},
   volume={2},
   date={1993},
   number={2},
   pages={211--232},
   issn={0218-2165},
}

\bib{R}{book}{
   author={Radford, David E.},
   title={Hopf algebras},
   series={Series on Knots and Everything},
   volume={49},
   publisher={World Scientific Publishing Co. Pte. Ltd., Hackensack, NJ,},
   date={2012},
}

\bib{RT}{article}{
   author={Reshetikhin, N.},
   author={Turaev, V. G.},
   title={Invariants of $3$-manifolds via link polynomials and quantum groups},
   journal={Invent. Math. 103 (1991), no. 3, 547--597.},
}

\bib{S}{article}{
   author={Suzuki, Sakie},
   title={The universal quantum invariant and colored ideal triangulations},
   journal={Algebr. Geom. Topol.},
   volume={18},
   date={2018},
   number={6},
   pages={3363--3402},
   issn={1472-2747},
}

\bib{Tu}{book}{
   author={Turaev, Vladimir G.},
   title={Quantum invariants of knots and 3-manifolds},
   series={De Gruyter Studies in Mathematics},
   volume={18},
   note={Third edition [of  MR1292673]},
   publisher={De Gruyter, Berlin},
   date={2016},
}

\bib{TV}{article}{
   author={Turaev, V. G.},
   author={Viro, O. Ya.},
   title={State sum invariants of $3$-manifolds and quantum $6j$-symbols},
   journal={Topology},
   volume={31},
   date={1992},
   number={4},
   pages={865--902},
   issn={0040-9383},
}

\bib{TVVA}{book}{
   author={Turaev, Vladimir},
   author={Virelizier, Alexis},
   title={Monoidal categories and topological field theory},
   series={Progress in Mathematics},
   volume={322},
   publisher={Birkh\"{a}user/Springer, Cham},
   date={2017},
}

\bib{W}{article}{
   author={Witten, Edward},
   title={Quantum field theory and the Jones polynomial},
   journal={Comm. Math. Phys.},
   volume={121},
   date={1989},
   number={3},
   pages={351--399},
}

\bib{Hakone1}{article}{
   author={Yamashita, Masakatu},
   title={Branched covering space and DS-diagram},
   journal={Hakone seminar 2000, in japanese},
   note={{\tt http://hakone-seminar.com/Hakone2000.html}},
}


\end{biblist}
\end{bibdiv}

\end{document}